\documentclass[reqno, 12pt]{amsart}
\pdfoutput=1
\makeatletter
\let\origsection=\section \def\section{\@ifstar{\origsection*}{\mysection}} 
\def\mysection{\@startsection{section}{1}\z@{.7\linespacing\@plus\linespacing}{.5\linespacing}{\normalfont\scshape\centering\S}}
\makeatother        

\usepackage{amsmath,amssymb,amsthm}
\usepackage{mathrsfs}
\usepackage{mathabx}\changenotsign
\usepackage{dsfont}
 
\usepackage{xcolor}
\usepackage[backref]{hyperref}
\hypersetup{
    colorlinks,
    linkcolor={red!60!black},
    citecolor={green!60!black},
    urlcolor={blue!60!black}
}

\usepackage[open,openlevel=2,atend]{bookmark}

\usepackage[abbrev,msc-links,backrefs]{amsrefs} 
\usepackage{doi}

\renewcommand{\PrintDOI}[1]{\doi{#1}}

\usepackage[T1]{fontenc}
\usepackage{lmodern}
\usepackage[babel]{microtype}

\usepackage[english]{babel}

\linespread{1.3}
\usepackage{geometry}
\geometry{left=27.5mm,right=27.5mm, top=25mm, bottom=25mm}

\usepackage{enumitem}
\def\rmlabel{\upshape({\itshape \roman*\,})}

\def\alabel{\upshape({\itshape \alph*\,})}

\let\polishlcross=\l
\def\l{\ifmmode\ell\else\polishlcross\fi}

\let\emptyset=\varnothing
\let\setminus=\smallsetminus

\makeatletter
\def\moverlay{\mathpalette\mov@rlay}
\def\mov@rlay#1#2{\leavevmode\vtop{   \baselineskip\z@skip \lineskiplimit-\maxdimen
   \ialign{\hfil$\m@th#1##$\hfil\cr#2\crcr}}}
\newcommand{\charfusion}[3][\mathord]{
    #1{\ifx#1\mathop\vphantom{#2}\fi
        \mathpalette\mov@rlay{#2\cr#3}
      }
    \ifx#1\mathop\expandafter\displaylimits\fi}
\makeatother

\newcommand{\dcup}{\charfusion[\mathbin]{\cup}{\cdot}}

\let\epsilon=\varepsilon
\let\eps=\epsilon
\let\rho=\varrho

\mathcode`l="8000
\begingroup
\makeatletter
\lccode`\~=`\l
\DeclareMathSymbol{\lsb@l}{\mathalpha}{letters}{`l}
\lowercase{\gdef~{\ifnum\the\mathgroup=\m@ne \ell \else \lsb@l \fi}}\endgroup

\usepackage{tikz}
\usetikzlibrary{calc,decorations.pathmorphing}
\pgfdeclarelayer{background}
\pgfdeclarelayer{foreground}
\pgfdeclarelayer{front}
\pgfsetlayers{background,main,foreground,front}

\def\EE{{\mathds E}}

\def\PP{{\mathds P}}

\newcommand{\cQ}{\mathcal{Q}}

\newcommand{\hedge}[7]{
		\coordinate (x) at #1;
		\coordinate (y) at #2;
		\coordinate (z) at #3;

		\ifx\relax#5\relax
			\def\lwidth{0}
		\else
			\def\lwidth{#5}
		\fi

		\def\cliplength{2*#4+2*\lwidth}
		\def\hlwidth{0.5*\lwidth}
		\coordinate (xy) at ($(x)!#4!90:(y)$);
		\coordinate (yx) at ($(y)!#4!-90:(x)$);
		\coordinate (yz) at ($(y)!#4!90:(z)$);
		\coordinate (zy) at ($(z)!#4!-90:(y)$);
		\coordinate (zx) at ($(z)!#4!90:(x)$);
		\coordinate (xz) at ($(x)!#4!-90:(z)$);
				\coordinate (xyi) at ($(x)!#4-\hlwidth!90:(y)$);
		\coordinate (yxi) at ($(y)!#4-\hlwidth!-90:(x)$);
		\coordinate (yzi) at ($(y)!#4-\hlwidth!90:(z)$);
		\coordinate (zyi) at ($(z)!#4-\hlwidth!-90:(y)$);
		\coordinate (zxi) at ($(z)!#4-\hlwidth!90:(x)$);
		\coordinate (xzi) at ($(x)!#4-\hlwidth!-90:(z)$);
		\coordinate (mx) at ($(xz)!.5!(xy)$);
		\coordinate (my) at ($(yx)!.5!(yz)$);
		\coordinate (mz) at ($(zy)!.5!(zx)$);
				
		\ifx\relax#7\relax
					\else
			\fill[#7] (xyi) -- (yxi) -- (yzi) -- (zyi) -- (zxi) -- (xzi) -- cycle;	
  	\fi

		\begin{scope}
    \clip ($(xz)!\hlwidth!(x)$) -- ($(x)!\cliplength!(xz)$) -- ($(x)!\cliplength!(mx)$) -- ($(x)!\cliplength!(xy)$) -- ($(xy)!\hlwidth!(x)$) -- cycle;
    	\ifx\relax#7\relax\else
	 			\fill[#7] (x) circle (#4);
			\fi
			\ifx\relax#5\relax\else
				\draw[line width=\lwidth,#6] (x) circle (#4);
			\fi
		\end{scope}
		
		\begin{scope}
    \clip ($(yx)!\hlwidth!(y)$) -- ($(y)!\cliplength!(yx)$) -- ($(y)!\cliplength!(my)$) -- ($(y)!\cliplength!(yz)$) -- ($(yz)!\hlwidth!(y)$) -- cycle;
 			\ifx\relax#7\relax\else
	 			\fill[#7] (y) circle (#4);
			\fi
			\ifx\relax#5\relax\else
				\draw[line width=\lwidth,#6] (y) circle (#4);
			\fi
		\end{scope}
		
		\begin{scope}
    \clip ($(zx)!\hlwidth!(z)$) -- ($(z)!\cliplength!(zx)$) -- ($(z)!\cliplength!(mz)$) -- ($(z)!\cliplength!(zy)$) -- ($(zy)!\hlwidth!(z)$) -- cycle;
 			\ifx\relax#7\relax\else
	 			\fill[#7] (z) circle (#4);
			\fi
			\ifx\relax#5\relax\else
				\draw[line width=\lwidth,#6] (z) circle (#4);
			\fi
		\end{scope}
		
		\ifx\relax#5\relax\else
			\draw[line width=\lwidth,line cap=round,#6] (xy) -- (yx);
			\draw[line width=\lwidth,line cap=round,#6] (yz) -- (zy);
			\draw[line width=\lwidth,line cap=round,#6] (zx) -- (xz);
		\fi
}

\newtheoremstyle{note}  {4pt}  {4pt}  {\sl}  {}  {\bfseries}  {.}  {.5em}          {}
\newtheoremstyle{introthms}  {3pt}  {3pt}  {\itshape}  {}  {\bfseries}  {.}  {.5em}          {\thmnote{#3}}
\newtheoremstyle{remark}  {2pt}  {2pt}  {\rm}  {}  {\bfseries}  {.}  {.3em}          {}

\theoremstyle{plain}
\newtheorem{theorem}{Theorem}[section]
\newtheorem{lemma}[theorem]{Lemma}

\newtheorem{conj}[theorem]{Conjecture}

\newtheorem{claim}[theorem]{Claim}

\theoremstyle{note}
\newtheorem{defin}[theorem]{Definition}

\theoremstyle{remark}
\newtheorem{remark}[theorem]{Remark}

\begin{document}
\title[Hamiltonicity in high degree triple systems]{On the Hamiltonicity of triple systems with high minimum degree}

\author[V.~R\"{o}dl]{Vojt\v{e}ch R\"{o}dl}
\address{Department of Mathematics and Computer Science,
Emory University, Atlanta, USA}
\email{rodl@mathcs.emory.edu}

\author[A.~Ruci\'nski]{Andrzej Ruci\'nski}
\address{A. Mickiewicz University, Department of Discrete Mathematics, Pozna\'n, Poland}
\email{rucinski@amu.edu.pl}

\author[M.~Schacht]{Mathias Schacht}
\address{Fachbereich Mathematik, Universit\"at Hamburg, Hamburg, Germany}
\email{schacht@math.uni-hamburg.de}

\author[E.~Szemer\'edi]{Endre Szemer\'edi}
\address{Alfr\'ed R\'enyi Institute of Mathematics,
	 Hungarian Academy of Sciences,
	 Budapest, Hungary}
\email{szemered@cs.rutgers.edu}

\thanks{V.~R\"odl was supported by NSF grant DMS 1301698. 
	A.~Ruci\'nski was supported by the Polish NSC grant 2014/15/B/ST1/01688.
	M.~Schacht was supported through the \emph{Heisenberg-Programme} of the DFG\@.
	Part of research was carried out during research stays of the A.~Ruci\'nski and  M.~Schacht
	at Emory University.}

\subjclass[2010]{Primary: 05C65. Secondary: 05C45}
\keywords{hypergraphs, Hamiltonian cycles, Dirac's theorem}

\begin{abstract}
We show that every  3-uniform hypergraph  with minimum vertex degree at least
$0.8\tbinom{n-1}2$ contains a tight Hamiltonian cycle.
\end{abstract}

\maketitle

\section{Introduction}\label{intro}
In 1952 Dirac \cite{dirac} proved that every graph $G=(V,E)$ with  $|V|\geq 3$ and minimum vertex degree $\delta(G)$ 
at least $|V|/2$ 
contains a Hamiltonian cycle. Moreover, this is optimal as there are graphs~$G$ with $\delta(G)=\lceil |V|/2\rceil-1$ not containing a Hamiltonian cycle. We study an analogous Dirac-type problem for $3$-uniform hypergraph, i.e.,  
what minimum vertex degree in a $3$-uniform hypergraph 
guarantees the existence of a (tight) Hamiltonian cycle? A lot of recent research 
concerning Dirac-type problems for hypergraphs originated in the work 
of Katona and Kierstead~\cite{KK} (see also~\cite{sur} for an overview).

A \emph{$k$-uniform hypergraph} $H=(V,E)$ (or \emph{$k$-graph}) consists of  a finite set $V=V(H)$
of \emph{vertices} together with a family $E=E(H)$ of $k$-element subsets 
of $V$, the so-called \emph{(hyper)edges}.
Whenever convenient we  identify $H$ with $E(H)$. In particular, we denote by $|H|=|E(H)|$ the number of edges in $H$.
For $k$-graphs with $k\ge3$,  a  cycle might be defined in several ways (see, e.g., \cites{berge,Bermond,KK,kob}). 
Here we restrict ourselves to $k=3$. For $l=1$ or $2$ and an integer~$n$ with 
$(3-l)|n$,  define an  \emph{$l$-overlapping cycle $C^l_n$}
 as an $n$-vertex $3$-graph  with $\frac n{3-l}$ edges, whose vertices
can be ordered cyclically in such a way that the edges are segments of that
cyclic ordering and every two consecutive edges share exactly~$l$ vertices.
 For $l=2$, we call the cycle \emph{tight} and for $l=1$ we call it \emph{loose}.
  A \emph{tight (resp.\ loose) Hamiltonian cycle} in a $3$-graph~$H$ is a spanning tight (resp.\ loose) cycle in~$H$, 
that is, a subhypergraph of~$H$ which is a tight (resp.\ loose) cycle that contains all vertices of $H$.

For a 3-graph $H=(V,E)$, in addition to having two types of cycles, there are also two natural 
notions of minimum vertex degree (see $\delta_1(H)$ and $\delta_2(H)$ below). 
For a vertex~$v\in V$ we  define $\deg_H(v)$ as the number of edges of $H$ containing $v$ and for every 
pair of distinct vertices $u$, $v\in V$ we define the \emph{co-degree/pair degree} of that pair, $\deg_H(u,v)$, by 
the number of edges of $H$ containing both $u$ and $v$.
Clearly, for an $n$-vertex 3-graph we have $\deg_H(v)\le \binom {n-1}2$ while $\deg_H(u,v)\le n-2$.
For a 3-graph $H=(V,E)$ we denote by 
$$
	\delta_1(H)=\delta(H)=\min_{v\in V}\deg_H(v)
$$ 
the \emph{minimum vertex degree of $H$} and by 
$$
	\delta_2(H)=\min_{\substack{u,\,v\,\in V\\ u\neq v}}\deg_H(u,v)
$$
the \emph{minimum co-degree of $H$}. 
 
We are now ready to define a crucial Dirac-type extremal parameter.
\begin{defin}
Let $d$, $l$, and $n$ be integers satisfying $1\le\l, d\le2$, and $(3-l)|n$.
The function $h^l_d(n)$ equals the smallest integer $h$ such that every $n$-vertex
$3$-graph $H$ with $\delta_d(H)\ge h$ contains a spanning $l$-overlapping cycle, that is, a loose Hamiltonian cycle for $l=1$ and 
a tight Hamiltonian cycle for $l=2$.  
In other words,
$$
h^l_d(n)=\min\big\{h\in[n]\colon \delta_d(H)\ge h\ \Longrightarrow\ H\supseteq C^l_n\big\}.
$$
\end{defin}

For large~$n$ and the following choices of $d$ and $l$ the function $h^l_d(n)$ is well understood.
The case $d=l=2$ (co-degree forcing Hamiltonian tight cycles) was solved approximatively and exactly (for large~$n$)
in~\cites{rrs3,3}, while the case $d=2$ and $l=1$ (co-degree forcing Hamiltonian loose cycles) was solved 
approximatively in~\cite{kob}. In~\cite{Bus} an approximate formula for $h_1^1(n)$ (vertex degree forcing Hamiltonian loose cycles)
was found, while an exact form of this result was obtained in~\cite{YiJie}. 
The related problem concerning minimum degree conditions for perfect matchings was resolved for co-degrees approximately and exactly in~\cites{rrs3,rrs} and 
similarly for vertex degrees in~\cites{hps,kot,khan}.

In particular, the results mentioned above resolve the asymptotic behaviour for all possible values of $d$ and $\l$
with the exception of $d=1$ and $l=2$. It seems that more difficulties arise in that case, since $d<l$
and, hence, we are not in control of co-degrees, while it seems that large co-degrees 
are instrumental in building long tight paths and cycles.
We will derive new bounds for $h_1^2(n)$ and for simplicity we set
\[
	h(n)=h_1^2(n)\,.
\]
Some estimates on $h(n)$ were obtained over the last few years.
While proving a more general result, Glebov, Person, and Weps \cite{GPW} showed that 
\[
	h(n)\le(1-\epsilon)\binom{n-1}2,
\]
where the numerical value of $\epsilon$ is close to
$5\times 10^{-7}$. In \cite{1112} the first two authors improved upon that bound by showing that
for every $\gamma>0$ there exists $n_0$ such that if $n\ge n_0$ then
$$h(n)\le \left(\frac{5-\sqrt5}{3}+\gamma\right)\binom  {n-1}2\approx .92\binom{n-1}2.$$
Here we make a further improvement.

\begin{theorem}\label{main}
 There exists $n_{\ref{main}}$ such that if $n\ge n_{\ref{main}}$ then
$$h(n)\le .8\binom  {n-1}2.$$
\end{theorem}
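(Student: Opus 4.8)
The plan is to prove Theorem~\ref{main} by the absorption method of R\"odl, Ruci\'nski, and Szemer\'edi, in the vertex-degree variant developed by the first two authors in~\cite{1112}. So let $H$ be an $n$-vertex $3$-graph with $\delta_1(H)\ge.8\binom{n-1}2$, let $n$ be large, and fix constants $\eta\gg\eta'\gg\eta''>0$. The crux throughout is that the hypothesis bounds vertex degrees but gives no control on an individual co-degree $\deg_H(x,y)$---which may even be $0$---while a tight path terminating in an ordered pair $(x,y)$ can only be extended into the $\deg_H(x,y)$ vertices $z$ with $\{x,y,z\}\in H$. We therefore distinguish the pairs safe to build from: call $\{x,y\}$ \emph{useful} if $\deg_H(x,y)\ge\beta n$ for a suitable absolute constant $\beta\in(0,1)$ with $\beta(1-\beta)>\tfrac15$. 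Two elementary consequences of $\delta_1(H)\ge.8\binom{n-1}2$ are used repeatedly: \emph{(i)} each link graph $L_v$ has edge density at least $.8$, so by convexity for every $v$ all but at most $\tfrac{.2}{1-\beta}n+O(1)$ vertices $u$ make $\{u,v\}$ useful; and \emph{(ii)} from a useful end-pair $(x,y)$ of a tight path currently occupying fewer than $\tfrac12\bigl(\beta-\tfrac{.2}{1-\beta}\bigr)n$ vertices one can always append a vertex $z$ so that $(y,z)$ is again useful. Since the coefficient in \emph{(ii)} is bounded away from $1$, greedy extension alone grows a tight path to only a fixed positive fraction of $V$, so a reservoir, an absorbing path, and a separate almost-spanning path are all needed.

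First I would fix a random \emph{reservoir} $R\subseteq V$, $|R|=\eta n$; with high probability $|N_H(x,y)\cap R|\ge\tfrac12\beta\eta n$ for every useful pair and $H[R]$ again has all link densities $\ge.8$. From this one proves a \textbf{Connecting Lemma}: any two vertex-disjoint useful ordered pairs are joined by a tight path of bounded length whose internal vertices lie in $R$, and this can be carried out for up to $\eta'n$ disjoint requests in succession using only an $\eta$-fraction of $R$; the proof grows short useful-ended tight paths from both pairs into $R$ via~\emph{(ii)} and splices the free ends, the bounded number of intersection conditions being met because the relevant co-degrees inside $R$ are linear in $|R|$. Next comes the \textbf{Absorbing Lemma}: $H$ contains a tight path $P_{\mathrm{abs}}$ with useful end-pairs and $|V(P_{\mathrm{abs}})|\le\eta n$ such that for every $S\subseteq V\setminus V(P_{\mathrm{abs}})$ with $|S|\le\eta''n$ there is a tight path on $V(P_{\mathrm{abs}})\cup S$ with the same end-pairs. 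As usual this follows by proving that every vertex $v$ lies in $\Omega(n^4)$ \emph{absorbing gadgets}---short tight paths $x_1x_2x_3x_4$, hinged at a useful pair, for which $x_1x_2vx_3x_4$ is again a tight path---then selecting a random subfamily of $\Theta(\eta n)$ of them and threading it into $P_{\mathrm{abs}}$ by the Connecting Lemma.

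It remains to produce an \textbf{Almost-spanning Path}: since $H'=H-V(P_{\mathrm{abs}})$ still satisfies a degree bound of the same type (up to an $O(\eta)$ loss), $H'$ contains a single tight path $Q$ with useful end-pairs covering all but $\le\eta''n$ of its vertices, a small sub-reservoir of $R$ being kept aside. This I would obtain by an internal absorption together with an almost-perfect fractional tight-path cover, which exists well below the degree threshold at hand. Granting this, the assembly is routine: using the Connecting Lemma through the reserved part of $R$, join an end-pair of $P_{\mathrm{abs}}$ to an end-pair of $Q$ and the other two end-pairs to each other, yielding a tight cycle through all of $V$ but a set $S$ with $|S|\le\eta''n$ disjoint from $V(P_{\mathrm{abs}})$; the Absorbing Lemma then reroutes the $P_{\mathrm{abs}}$-segment through $S$, completing a tight Hamiltonian cycle.

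I expect the Absorbing Lemma---specifically, the count of absorbing gadgets for a fixed vertex---to be the real obstacle, and the point at which the constant $.8$ is spent. Counting the quadruples $(x_1,x_2,x_3,x_4)$ above amounts to counting configurations that simultaneously avoid a bounded number (essentially five) of prescribed non-edges of $H$; a naive union bound would need the complement of each link graph to have density strictly below $\tfrac15$---that is, $\delta_1(H)$ strictly above $.8\binom{n-1}2$---so a genuinely careful count, exploiting the overlaps among these conditions and the abundance of useful pairs, is what makes the exact value $.8$ work. The remaining steps comfortably tolerate much smaller degree; reducing the threshold toward the conjectured optimum $\tfrac59\binom{n-1}2$ would require a substantially more economical absorbing (or a different covering) argument, and is left open.
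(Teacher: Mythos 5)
Your outline reproduces the general absorption architecture of the paper (absorbing path, reservoir, cover by useful-ended pieces, connections, absorption), but the two places where the constant $.8$ is actually spent are left unproved, and the sketches you give for them would not go through as stated. The most serious gap is the Connecting Lemma. Your step \emph{(ii)} only keeps the end-pair useful at the same fixed level $\beta$, and ``splicing the free ends'' of two useful-ended paths does not follow from the relevant co-degrees being linear: with $\beta$ possibly well below $1/2$ the two end-neighbourhoods need not even intersect, and even when they do, a common neighbour $z$ only supplies the edges $\{x,y,z\}$ and $\{x',y',z\}$, never the middle edge of a tight connection. The paper has to \emph{boost} the co-degree of the running end-pair along a four-edge path through the increasing sequence $.33,.39,.48,.58,.65$, using Claim~\ref{f} (that $\delta(G_{\alpha})\ge g_c(\alpha)(n-1)$), which is exactly where $c=.799$ is needed (one needs $\alpha_i+g_{.799}(\alpha_{i+1})>1$ at every step), and then splices by pairing $N_H(u_4,u_5)$ against $N_H(v_4,v_5)$ via Claim~\ref{BR} and intersecting with the link intersection $H(u_5)\cap H(v_5)$; the closing inequality $(2\cdot.799-1)+(.65)^2>1$ is precisely where $.8$ enters. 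Nothing in your proposal substitutes for this mechanism, and your diagnosis that the absorber count is ``the point at which $.8$ is spent'' is therefore only half right: the connection step is an equal bottleneck.

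Second, for the Absorbing Lemma you correctly observe that the naive union bound fails at exactly $.8$, but the ``genuinely careful count'' you defer to is the actual content of the lemma and is not routine: the paper obtains it by counting triangles in the link $H'(x)$ via Nordhaus--Stewart (Lemma~\ref{NS}), intersecting with $H'$ (the edges containing a $1/3$-large pair) with a margin of only $.005\binom{n}{3}$ (Claim~\ref{tx}), and then applying Erd\H{o}s's theorem to produce many copies of $K_{2,2,2}$ from which absorbers of order $4$ or $5$ with \emph{both} end-pairs $1/3$-large are extracted (Claim~\ref{456}); a gadget ``hinged at a useful pair'' with only one useful end cannot be threaded into the absorbing path by the Connecting Lemma. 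Finally, your almost-spanning path $Q$, obtained from ``an almost-perfect fractional tight-path cover which exists well below the degree threshold,'' ignores the central difficulty that every piece of any cover must itself have useful end-pairs in order to be connectable; a cover produced under a weak hypothesis may end its pieces in pairs of co-degree zero. The paper circumvents this by tiling $H''\subseteq H'$ with copies of $K_{L,L,L}$ (weak regularity together with the $5/9$ perfect-matching theorem, Lemma~\ref{clic} and Theorem~\ref{sch}), since each such copy automatically contains a long path with $1/3$-large end-pairs. In short, the architecture is the right one, but the proof at threshold $.8$ is missing precisely at the steps where $.8$ is used.
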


This upper bound on $h(n)$ seems to be far from optimal.
Indeed, the best known constructions  yield
\[
	h(n)\geq \left(\frac59+o(1)\right)\binom{n-1}2
\]
and we briefly mention three constructions achieving this bound.
\begin{enumerate}[label=\rmlabel]
	\item Consider a partition $X\dcup Y=V$ of the vertex set $V$ of size $n$ 
		with $|X|=\lceil (n+1)/3\rceil$ and let $H$ be the $3$-graph containing all 
		edges $e$ such that $|e\cap X|\neq 2$. It is not hard to show that~$H$ contains no tight 
		Hamiltonian cycle, since two consecutive vertices in $X$ cannot be connected to $Y$ (see, e.g.,~\cite{1112}). 
		Moreover, we have  $\delta(H)\geq (5/9+o(1))\binom{n-1}{2}$.
	\item Similarly, one may consider a partition $X\dcup Y=V$ with $|X|=\lceil 2n/3\rceil$
		and let~$H$ be the $3$-graph consisting of all 
		hyperedges $e$ such that $|e\cap X|\neq 2$. Again $H$ has $\delta(H)\geq (5/9+o(1))\binom{n-1}{2}$
		and it contains no tight Hamiltonian cycle.
	\item The last example utilises the fact that every tight Hamiltonian cycle contains a matching of 
		size~$\lfloor n/3\rfloor$. Again we consider a partition $X\dcup Y=V$ this time with $|X|=\lfloor n/3\rfloor-1$
		and let $H$ consist of all hyperedges having at least one vertex in~$X$. Consequently,~$H$ 
		contains no matching of size $\lfloor n/3\rfloor$ and, hence, no tight Hamiltonian cycle.
		On the other hand, $\delta(H)\geq (5/9+o(1))\binom{n-1}{2}$.
\end{enumerate}

It might be possible that these constructions give the right asymptotic lower bound for~$h(n)$, 
which leads to the following conjecture.

\begin{conj}\label{con59}
$
	h(n)=\left(\frac59+o(1)\right)\binom{n-1}2.
$
\end{conj}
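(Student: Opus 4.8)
The lower bound $h(n)\ge(5/9-o(1))\binom{n-1}2$ is already supplied by constructions~(i)--(iii) above, so proving Conjecture~\ref{con59} amounts to the matching upper bound: for every $\gamma>0$ there is an $n_0$ such that every $n$-vertex $3$-graph $H$ with $n\ge n_0$ and $\delta_1(H)\ge(5/9+\gamma)\binom{n-1}2$ contains a tight Hamiltonian cycle. The natural route is the absorption method, organised around a \emph{stability dichotomy}. Fix auxiliary constants $\gamma\gg\epsilon\gg\mu\gg 1/n_0>0$. One argues that either $H$ lies within edit distance $\epsilon n^3$ of one of the three extremal $3$-graphs (the \emph{extremal case}) or it does not (the \emph{non-extremal case}), and the two regimes are treated by completely different means.

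In the non-extremal case the first and hardest task is a \emph{stability theorem}: if $\delta_1(H)\ge(5/9+\gamma)\binom{n-1}2$ and $H$ is $\epsilon$-far from each of~(i)--(iii), then $H$ carries a robust pseudo-path structure — say a set $W\subseteq V(H)$ with $|W|\ge(1-\sqrt\epsilon)n$ and $\delta_2(H[W])\ge\mu n$, together with a \emph{connecting lemma} asserting that any two disjoint ordered pairs of vertices from $W$ are joined, inside $H$, by $\Omega(n^{k})$ tight paths of some fixed length $k=k(\mu)$ (so that each such path may be taken to avoid any prescribed set of $o(n)$ vertices). Granting this, the remainder is the standard absorption scheme. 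First, a first-moment argument over the abundant connecting paths produces a short \emph{absorbing tight path} $P_{\mathrm{abs}}$ that can incorporate, between its fixed endpoints, any set of at most $\sqrt\epsilon n$ further vertices (in particular all of $V(H)\setminus W$). Second, a greedy or regularity-based extension of $P_{\mathrm{abs}}$ inside $W$ yields a tight path omitting only $o(n)$ vertices. Third, the connecting lemma absorbs the leftover vertices into $P_{\mathrm{abs}}$ and closes the resulting path into a spanning tight cycle. Each of these steps is routine once $W$ and the connecting lemma are in hand.

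The extremal case requires a separate, hands-on treatment of each of~(i)--(iii), because there the generic machinery fails outright: these $3$-graphs, although of linear minimum co-degree, are rigidly constrained — by a cyclic parity/run-length obstruction in~(i) and~(ii), and by a matching obstruction in~(iii). The key point is that the constructions are \emph{brittle}: the $\gamma$-surplus in the degree hypothesis buys a positive density of ``wrong-type'' edges, which is enough to repair them. For~(i) and~(ii) one recovers the partition $X\dcup Y$ up to the at most $\epsilon n$ vertices where $H$ deviates from the pattern, places those exceptional vertices at designated positions in the cyclic order, and then builds the Hamiltonian cycle segment by segment so that it crosses $X\dcup Y$ only at spots where an edge of the scarce type is guaranteed by a short degree count. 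For~(iii) one uses the surplus to find a perfect matching avoiding the matching-deficiency and threads it into a tight cycle by a local connecting argument. No pseudorandomness is used in this case — only the explicit near-partition structure.

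The principal obstacle — and the reason Conjecture~\ref{con59} is open while only Theorem~\ref{main} is established — sits in two places. First, the stability theorem presupposes that $\epsilon$-farness from exactly the three listed constructions already forces the robust co-degree and connecting structure. But one must then handle a rich family of $3$-graphs that are far from all of~(i)--(iii) — disjoint unions or random perturbations of the patterns, or partition-based $3$-graphs allowing at most one vertex of a fixed class per edge — some of which carry $\delta_1$ arbitrarily close to $\tfrac59\binom{n-1}2$ together with $\Theta(n^2)$ pairs of co-degree zero; showing that each such $H$ nevertheless contains the connecting structure (equivalently, that~(i)--(iii) are, up to small perturbations, the only obstructions at the threshold) appears to require a genuinely new stability result. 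Second, even with $\delta_2(H[W])\ge\mu n$ secured, deriving the connecting lemma from only a $(5/9+\gamma)\binom{n-1}2$ vertex-degree hypothesis is delicate: tight paths tend to become ``trapped'' at their endpoints, and the careful weighted counting of tight paths that would control this appears to become tight around the values reached by current methods, Theorem~\ref{main} included. Replacing the constant $0.8$ of Theorem~\ref{main} by $5/9$, rather than merely nudging it down, is what a proof of the conjecture ultimately requires.
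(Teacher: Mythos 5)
The statement here is Conjecture~\ref{con59}, which the paper does not prove: it is posed as an open problem. The paper only supplies the lower bound $h(n)\ge\left(\tfrac59+o(1)\right)\binom{n-1}{2}$ via the three constructions, proves the far weaker upper bound $h(n)\le 0.8\binom{n-1}{2}$ (Theorem~\ref{main}), and even cautions, citing Han and Zhao, that the conjectured value might not be correct. Your text is therefore not a proof of the statement but a programme for one, and the gap is the entire upper bound. The two pillars your outline rests on --- a stability theorem asserting that every $H$ with $\delta_1(H)\ge(5/9+\gamma)\binom{n-1}{2}$ that is $\epsilon$-far from constructions (i)--(iii) contains a large set $W$ with $\delta_2(H[W])\ge\mu n$, and a connecting lemma producing $\Omega(n^k)$ short tight paths between prescribed ordered pairs under a vertex-degree hypothesis only slightly above $\tfrac59\binom{n-1}{2}$ --- are postulated, not established, and you concede as much in your last paragraph. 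The extremal case is likewise only sketched: for (i) and (ii) the obstruction is that two consecutive $X$-vertices cannot be continued into $Y$, and no argument is given that the $\gamma$-surplus of ``wrong-type'' edges can be scheduled consistently around an entire cyclic ordering rather than just locally; for (iii) the step from a perfect matching avoiding the deficiency to a spanning tight cycle is asserted, not argued.

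What you do get right is the reduction itself: the lower bound is exactly the paper's constructions, so the conjecture is equivalent to the matching upper bound, and the absorption-plus-stability framework you describe is the natural (and, in the community, expected) route, consistent with the paper's own machinery (Connecting Lemma, Absorbing Lemma, Reservoir, Cover Lemma) which currently only functions at minimum degree around $0.8\binom{n-1}{2}$. But an accurate diagnosis of why the problem is hard is not a proof of the statement, and since the statement remains open, no blind attempt of this form could be judged correct; as written, your text would serve as a discussion of Conjecture~\ref{con59}, not a resolution of it.
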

However, we remark that recently Han and Zhao \cite{YiJieCounter} showed
that for $k\ge4$ some Dirac-type thresholds for tight Hamiltonian cycle 
are strictly larger than the corresponding thresholds for perfect matchings, which 
may put some doubt on Conjecture~\ref{con59}. However, it seems unlikely that the upper bound 
given Theorem~\ref{main} is optimal and we shall return to the problem of determining the asymptotic 
behaviour of~$h(n)$ in the near future.

\section{Outline of proof and preliminaries}

\subsection{Outline}

Our proof follows the absorbing path method developed in \cites{rrs3,k,3}. We begin with building an absorbing path $A$ and putting aside a small
reservoir set $R$ selected randomly so that $H[R]$ preserves the degree properties of $H$.  Then a long
cycle $C$ containing $A$ is created in the remaining hypergraph (by first building a family of disjoint paths and then connecting them, as well as $A$,  together via the reservoir $R$). Finally, utilising the absorbing property of $A$, the cycle $C$ is
extended to a Hamiltonian cycle in $H$.

Our proof is founded on four pillars: the Connecting Lemma, the Absorbing Lemma, the Reservoir Lemma, and the  Cover Lemma (replacing the Path Cover Lemma used earlier in~\cites{k,1112}), and we will prove them all in the next section. The Connecting Lemma and the Absorbing Lemma are the bottlenecks here. In \cite{1112} we `shaved' the hypergraph $H$ from edges containing pairs of small degree until all pairs of positive degree were large, that is, of degree a little bigger than $n/2$. In the obtained subhypergraph $H'$ proving a connecting lemma was easy, however we paid a high price for that: to prevent $H'$ from becoming empty, we had to raise the minimum vertex degree of $H$ to about $.92\binom{n-1}2$. Here we refine that approach: we only dispose of the edges of $H$ with all three pairs of small degree and, at the same time, we lower the notion of ``small'' to only $n/3$. Then both, the Connecting Lemma and the Absorbing Lemma, are  a bit harder to prove, yet we manage to do so, keeping~$\delta(H)$ at around $.8\binom{n-1}2$.
The Reservoir Lemma, as usual, can be proved by a standard application of the probabilistic method. Finally, the proof of the  Cover Lemma follows the lines of the  approach from \cites{k,1112} in that it relies on the Weak Regularity Lemma.
Once the four lemmas are proved, the actual proof of Theorem~\ref{main} consists of five simple steps (stated below). For any $S\subset V(H)$, let $H-S$ denote the induced subhypergraph $H[V(H)\setminus S]$, that is, a subhypergraph obtained from~$H$ by deleting all vertices in  $S$ together with the edges they belong to.

\begin{enumerate}
\item Find an absorbing path $A$ in $H$.
\item Find a reservoir set $R$ in $H- V(A)$.
\item Applying the  Cover Lemma to a suitable selected sub-3-graph $H'$ of $H$, find a collection of disjoint paths $\mathcal P$, covering most of the vertices of $H-(V(A)\cup R)$.
\item Connect the paths in $\mathcal P$ and the absorbing path $A$, using vertices of $R$, to form a cycle $C$ which contains most of the vertices of $H$.
    \item Using the absorbing property of $A$, put all the remaining vertices on the cycle to form a Hamiltonian cycle in $H$.
\end{enumerate}

\subsection{Preliminaries}

Here we collect basic tools needed in the subsequent proofs. We begin with a lower bound on the number of triangles in an $n$-vertex graph  in terms of the number of its edges.
Although more refined results are available (see Razborov~\cite{raz}), for us it will be sufficient to use an old  bound of Nordhaus and Stewart \cite{ns} which is also attributed to Goodman~\cite{Good} and Moon and Moser~\cite{MM} (see~\cite{BB}*{Corollary 1.6 in Chapter~VI}).

\begin{lemma}\label{NS}
Every graph $G$ with $n$ vertices and $m$ edges contains at least $\frac{m}{3n}\left(4m-n^2\right)$ triangles. In particular, if for some $\rho>0$ we have  $m\ge\rho\tfrac{n^2}2$ then the number of triangles in $G$ is at least $\rho(2\rho-1)\tfrac{n^3}6$.
\end{lemma}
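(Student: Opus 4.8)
The plan is to use the classical double-counting argument behind this estimate. Fix a graph $G$ with vertex set $V$, $|V|=n$, and edge set $E$, $|E|=m$, write $N(x)$ for the neighbourhood of a vertex $x$, and let $t$ denote the number of triangles in $G$. For an edge $uv\in E$, the number of triangles through $uv$ is exactly $|N(u)\cap N(v)|$ (note $u\notin N(u)$ and $v\notin N(v)$, so no spurious vertices are counted), and since $N(u)\cup N(v)\subseteq V$, inclusion--exclusion gives
\[
	|N(u)\cap N(v)|\ \ge\ \deg(u)+\deg(v)-n\,.
\]
Summing over all edges, and observing that each triangle contributes to the left-hand side once for each of its three edges, we obtain
\[
	3t\ \ge\ \sum_{uv\in E}\big(\deg(u)+\deg(v)\big)-nm\ =\ \sum_{v\in V}\deg(v)^2-nm\,,
\]
the last equality because $\deg(v)$ is summed once for each edge incident to $v$.

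Next I would apply the Cauchy--Schwarz inequality (equivalently, convexity of $x\mapsto x^2$) together with the handshake identity $\sum_{v\in V}\deg(v)=2m$ to bound $\sum_{v\in V}\deg(v)^2\ge (2m)^2/n=4m^2/n$. Substituting this into the previous display yields
\[
	3t\ \ge\ \frac{4m^2}{n}-nm\ =\ \frac mn\big(4m-n^2\big)\,,
\]
and dividing by $3$ gives the stated lower bound $t\ge \frac{m}{3n}(4m-n^2)$.

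For the ``in particular'' clause, I would argue as follows. If $\rho\le 1/2$ the claimed bound $\rho(2\rho-1)\tfrac{n^3}{6}$ is non-positive, hence trivially valid, so we may assume $\rho>1/2$, which forces $m\ge\rho n^2/2>n^2/4$. On the interval $m\ge n^2/8$ the quadratic $m\mapsto \tfrac1{3n}\big(4m^2-n^2m\big)$ is increasing, so under the hypothesis $m\ge \rho n^2/2$ it is at least its value at $m=\rho n^2/2$, which a direct computation shows equals $\rho(2\rho-1)\tfrac{n^3}{6}$; combined with the first part this is the desired estimate. I do not expect any genuine obstacle: the only points needing a moment's care are the neighbourhood count (automatically excluding $u$ and $v$) and the monotonicity check of the quadratic in the final step.
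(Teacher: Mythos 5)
Your proof is correct; the paper itself does not prove Lemma~\ref{NS} but only cites it (Nordhaus--Stewart, Goodman, Moon--Moser), and your argument is exactly the classical one behind those references: counting triangles through each edge via $|N(u)\cap N(v)|\ge\deg(u)+\deg(v)-n$, applying Cauchy--Schwarz to $\sum_v\deg(v)^2$, and then the routine monotonicity/sign check for the ``in particular'' clause. Nothing further is needed.
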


  We will also need the following version of a result of Erd\H os~\cite{E}.
A $3$-graph $H$ is \emph{$3$-partite} if there is a partition $V(H)=V_1\dcup V_2\dcup V_3$ such that
every edge of $H$ intersects each set~$V_i$ in precisely one vertex.
A 3-partite 3-graph with $|V_1||V_2||V_3|$ edges is called \emph{complete} and denoted by $K_{h_1,h_2,h_3}$, where $h_i=|V_i|$, $i=1,2,3.$.
\begin{lemma}\label{erdos}
For every $d>0$ and an integer $h\ge1$, there exist $c>0$ and $n_{\ref{erdos}}$ such that  every $3$-uniform hypergraph $H$
on $n\ge n_{\ref{erdos}}$ vertices and  with at least $d n^3$ edges contains at least $cn^{3h}$ copies of $K_{h,h,h}$.
\end{lemma}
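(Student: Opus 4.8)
The plan is to derive the multipartite statement from the classical Erd\H{o}s theorem on complete $k$-partite $k$-graphs in the *non*-partite setting, via a standard random colouring (splitting the vertex set into three parts) followed by a supersaturation/averaging step. First I would recall the basic form of Erd\H{o}s's result \cite{E}: there is $c_0=c_0(d,h)>0$ and $n_0$ so that every $3$-graph on $n\ge n_0$ vertices with at least $dn^3$ edges contains a copy of $K^{(3)}_{h,h,h}$, i.e.\ three disjoint sets of size $h$ spanning all $h^3$ triples. (This is proved by the usual double-counting: a random $h$-subset of $V$ has, in expectation, many edges ``sitting completely inside'' it once $|H|\ge dn^3$, and one iterates; alternatively it follows from the Kővári--Sós--Turán-type bound for hypergraphs.) The point of the lemma as stated is the *counting* version — at least $cn^{3h}$ copies — which is the supersaturation upgrade of the existence statement.

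The key steps, in order, are as follows. (1) Fix $d$ and $h$; let $H$ have $n\ge n_{\ref{erdos}}$ vertices and $\ge dn^3$ edges. (2) Partition $V(H)$ uniformly at random into $V_1\dcup V_2\dcup V_3$; by linearity of expectation the number of edges meeting all three parts is at least $\tfrac{6}{27}|H|+o(n^3)\ge \tfrac{d}{5}n^3$ for $n$ large, so we may fix one such partition and pass to the $3$-partite $3$-graph $H'$ of those crossing edges, with part sizes $n_i=|V_i|\sim n/3$ and $|H'|\ge \tfrac{d}{5}n^3$. (3) Now run the standard supersaturation argument inside $H'$: pick a $3h$-tuple of vertices by choosing $h$ vertices from each $V_i$ uniformly at random; Erd\H{o}s's existence result, applied to the induced sub-$3$-graph on the sampled $(V_1',V_2',V_3')$ once those have size, say, $n_i/\log n$ and still carry $\ge (d/10)(n/\log n)^3$ crossing edges — which holds for all but a negligible fraction of choices by a Chernoff/Markov argument — guarantees at least one $K_{h,h,h}$ inside the sample. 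Counting the number of $3h$-subsets that *miss* a given copy of $K_{h,h,h}$ and comparing with the total number of samples containing any fixed copy shows that the number of copies is at least $c\,\binom{n_1}{h}\binom{n_2}{h}\binom{n_3}{h}\ge c' n^{3h}$ for suitable $c'=c'(d,h)>0$. (A cleaner route to step (3) is to invoke hypergraph supersaturation directly: every $3$-partite $3$-graph with part sizes $m$ and density $\ge \eps$ contains $\ge \delta(\eps,h)\,m^{3h}$ copies of $K_{h,h,h}$, which is itself proved from the existence version by the averaging just described; either way the combinatorial content is the same.)

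The main obstacle is step (3): turning ``one copy'' into ``$cn^{3h}$ copies'' requires the averaging to be set up so that the existence theorem can be applied to a *typical* random sub-structure, which forces one to sample sets slightly smaller than the $V_i$ (so that the absolute edge count, not just the density, stays above the threshold $d'm^3$ needed to invoke \cite{E}) and then to control, via concentration, the probability that a random sample is ``good''. Once that bookkeeping is in place, the final count is a routine double-counting of (good sample, copy-of-$K_{h,h,h}$-inside-it) incidences. The colour-splitting step (2) and the reduction from the $3$-partite to the general Erd\H{o}s theorem are entirely standard and contribute only constant factors to $c$; no dependence on anything beyond $d$ and $h$ enters, as required.
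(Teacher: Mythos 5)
The paper itself gives no proof of Lemma~\ref{erdos}: it is quoted as a known counting version of Erd\H{o}s's theorem~\cite{E}, so you were reconstructing a folklore supersaturation argument, and your overall strategy (deduce the counting statement from the existence statement by averaging over random vertex subsets and double counting) is indeed the standard and correct route. The random tripartition in your step (2) is superfluous, since the lemma does not require the copies of $K_{h,h,h}$ to respect any prescribed partition, but it is harmless.

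Step (3), however, would fail as you set it up. If you sample sub-structures of size $m\approx n/\log n$ and extract from each good sample only the single copy of $K_{h,h,h}$ that the existence theorem guarantees, then the double count yields at most about
$\binom{n}{m}\big/\binom{n-3h}{m-3h}\approx (n/m)^{3h}=(\log n)^{3h}$
copies, which is polylogarithmic and far short of $cn^{3h}$; making the samples large does not help unless you can already certify many copies per sample, which is exactly the statement being proved. The worry that drove you to growing samples --- that the \emph{absolute} edge count in a sample must exceed a threshold of the form $d'm^3$ --- is unfounded: that hypothesis is a density condition, so a \emph{constant} sample size $m=m(d,h)$, taken at least as large as the threshold $n_0(d/2,h)$ of the existence theorem, works. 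For such constant $m$, the expected number of edges induced on a uniformly random $m$-set is at least $(1-o(1))\,d\,m^3$, and since no $m$-set spans more than $\binom{m}{3}\le m^3/6$ edges, a positive proportion (roughly $3d$; not ``all but a negligible fraction'', but that is all you need) of the $m$-sets span at least $(d/2)m^3$ edges and hence each contains a copy of $K_{h,h,h}$. Since each fixed copy lies in exactly $\binom{n-3h}{m-3h}$ of the $\binom{n}{m}$ samples, the same double count now gives at least $c(d,h)\,n^{3h}$ copies, as required. So your plan is repaired by replacing the $n/\log n$-sized samples (and the Chernoff step) with constant-size samples and a positive-fraction averaging argument; as written, the bookkeeping certifies only $(\log n)^{3h}$ copies.
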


In the proof of the  Cover Lemma
we will also need the so-called \emph{weak hypergraph regularity lemma}, which is the straightforward
extension of Szemer\'edi's regularity lemma~\cite{Sz} from graphs to hypergraphs (see, e.g.~\cites{Ch91,FR92,St90}).

Given a $3$-graph $H$ and three non-empty, disjoint subsets $A_i\subset V(H)$, $i=1,2,3$, by~$H[A_1,A_2,A_3]$  we
denote the 3-partite 3-graph with vertex set $A_1\cup A_2\cup A_3$ which consists of all edges in $H$ with one vertex in each $A_i$.
We set $e_H(A_1,A_2,A_3)$ for the number of edges of $H[A_1,A_2,A_3]$ and define
\emph{ the density} of $H$ with respect to $(A_1,A_2,A_3)$ as
 $$d_H(A_1,A_2,A_3)=\frac{e_H(A_1,A_2,A_3)}{|A_1||A_2||A_3|}.$$
We say that a  3-partite 3-graph $H$ with 3-partition $(V_1,V_2,V_3)$ is \emph{ $\varepsilon$-regular}
 if for all $A_i\subseteq V_i$ with $|A_i|\ge\varepsilon|V_i|$, $i=1,2,3$,
$$|d_H(A_1,A_2,A_3)-d_H(V_1,V_2,V_3)|\le\varepsilon.$$

\begin{lemma}[Weak Regularity Lemma for 3-graphs]\label{WRL}
For all $\varepsilon>0$ and every integer $t_0$ there exist $T_0$ and $n_{\ref{WRL}}$ such that the following
 holds. For every $3$-graph $H$ on $n\ge n_{\ref{WRL}}$ vertices there is for some $t$,  with  $t_0\le t\le T_0$, a partition
 $V(H)=V_1\dcup\cdots\dcup V_t$ such that $|V_1|\le|V_2|\le\cdots\le|V_t|\le|V_1|+1$
 and for all but less than $\varepsilon\binom t3$ triplets of  partition classes $\{V_{i_1},V_{i_2},V_{i_3}\}$, the
the 3-partite 3-graph $H[V_{i_1},V_{i_2},V_{i_3}]$  is $\varepsilon$-regular.
\end{lemma}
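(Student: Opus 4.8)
The plan is to imitate Szemer\'edi's original energy-increment proof, with the graph mean-square edge density replaced by its $3$-uniform analogue. For a partition $\cQ=\{U_1,\dots,U_s\}$ of $V(H)$ set
\[
  \operatorname{ind}(\cQ)=\sum_{1\le i_1<i_2<i_3\le s}\frac{|U_{i_1}|\,|U_{i_2}|\,|U_{i_3}|}{n^3}\,d_H(U_{i_1},U_{i_2},U_{i_3})^2\,.
\]
Since all densities lie in $[0,1]$ and $\sum_{i_1<i_2<i_3}|U_{i_1}||U_{i_2}||U_{i_3}|\le\frac16\big(\sum_i|U_i|\big)^3=\frac{n^3}{6}$, we have $0\le\operatorname{ind}(\cQ)\le\frac16$ for every partition. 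The argument maintains an \emph{equitable} partition, starting from an arbitrary one into $t_0$ classes; as long as it fails the conclusion we refine it so that $\operatorname{ind}$ increases by a positive amount depending only on $\varepsilon$, which --- as $\operatorname{ind}$ is bounded --- can happen only boundedly often.

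The engine is the familiar defect form of the Cauchy--Schwarz inequality. If $H[V_{i_1},V_{i_2},V_{i_3}]$ is not $\varepsilon$-regular, witnessed by sets $A_{i_j}\subseteq V_{i_j}$ with $|A_{i_j}|\ge\varepsilon|V_{i_j}|$ and $|d_H(A_{i_1},A_{i_2},A_{i_3})-d_H(V_{i_1},V_{i_2},V_{i_3})|>\varepsilon$, then splitting each of the three classes into $\{A_{i_j},V_{i_j}\setminus A_{i_j}\}$ and bounding the new contribution to $\operatorname{ind}$ by convexity shows that the contribution of the triple $\{V_{i_1},V_{i_2},V_{i_3}\}$ grows by at least $\varepsilon^{5}\,|V_{i_1}||V_{i_2}||V_{i_3}|/n^3$; this is the usual graph computation carried out with one extra coordinate. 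Suppose now that an equitable partition $\cP=\{V_1,\dots,V_t\}$ with $t\ge t_0$ has more than $\varepsilon\binom t3$ non-$\varepsilon$-regular triples of classes. For each class $V_i$, take the common refinement of $V_i$ by the family of all witnessing subsets of $V_i$ coming from irregular triples through $i$ --- this splits $V_i$ into at most $2^{\binom{t-1}2}$ cells --- and perform this simultaneously in every class to obtain a refinement $\cP'$ of $\cP$. Summing the single-triple estimate over the irregular triples, each of weight $|V_{i_1}||V_{i_2}||V_{i_3}|/n^3\ge(n/t-1)^3/n^3\ge 1/(8t^3)$ once $n\ge 2t$, yields $\operatorname{ind}(\cP')\ge\operatorname{ind}(\cP)+c(\varepsilon)$ for some $c(\varepsilon)=\Omega(\varepsilon^{6})>0$ independent of $t$ and $n$.

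Because $\operatorname{ind}\le\frac16$ always, this refinement step can be applied at most $O(\varepsilon^{-6})$ times before we arrive at a partition all but at most $\varepsilon\binom t3$ of whose triples of classes are $\varepsilon$-regular; as each step multiplies the number of classes by a factor depending only on the current number of classes, the final $t$ is bounded by a constant $T_0=T_0(\varepsilon,t_0)$ of tower type, and all the divisions performed along the way are legitimate once $n\ge n_{\ref{WRL}}$ is taken large enough in terms of $T_0$. Finally, to keep every partition equitable in the strong sense $|V_1|\le\cdots\le|V_t|\le|V_1|+1$ required by the statement, we use the standard device: after each refinement, re-cut every cell into blocks of one common small size, gather the $o(n)$ leftover vertices into a bounded number of further blocks, and distribute any final remainder one vertex per class. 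Since only $o(n)$ vertices are moved per step, every density, and hence $\operatorname{ind}$, changes by $o(1)$, which for large $n$ is dominated by the fixed increment $c(\varepsilon)$. The only genuine difficulty here is clerical --- propagating the equitability condition and controlling the rounding errors through the iteration --- since the increment mechanism itself is verbatim the graph case.
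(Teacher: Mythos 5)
The paper does not prove this lemma at all --- it is quoted as a known straightforward extension of Szemer\'edi's regularity lemma, with references to Chung, Frankl--R\"odl and Steger --- and your energy-increment proof (index bounded by $\tfrac16$, defect Cauchy--Schwarz giving an $\varepsilon^{5}$-weighted gain per irregular triple, hence a gain of $\Omega(\varepsilon^{6})$ per iteration, common refinement by witness sets, re-equitabilization) is exactly the standard argument those sources use, so it is essentially the same approach and is correct. One small caveat: in the equitabilization step the number of relocated vertices is not $o(n)$ but a constant fraction of $n$ (roughly $2^{-t}n$ with the usual choice of block sizes), so the resulting loss in the index is beaten by the fixed increment $c(\varepsilon)$ through choosing the new number of classes (equivalently, assuming $t_0$ large in terms of $\varepsilon$), not by letting $n\to\infty$ --- the standard bookkeeping, but the justification as you phrased it is off.
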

\noindent Any partition guaranteed by Lemma \ref{WRL} will be referred to as \emph{$\eps$-regular}.

For brevity, we will often write $uv$ instead $\{u,v\}$. \emph{The link graph} of a vertex $u$ in a 3-graph $H$ is defined as
$$H(u)=\big\{vw\colon\{u,v,w\}\in H\big\}.$$
Note that
for every $v\neq u$
 \begin{equation}\label{degdeg}
 \deg_{H(u)}(v)= \deg_{H}(u,v).
 \end{equation}
 For each real $\alpha\in(0,1)$ we define
$$G_{\alpha}=\big\{uv\colon \deg_H(u,v)\ge\alpha(n-2)\big\}$$
and call a pair  \emph{$\alpha$-large} if it is in $G_\alpha$. The $1/3$-large pairs play a special role in our proof.
However, also $G_{.33}$ will appear in our proof and should not be confused with $G_{1/3}$.
Let
\begin{equation}\label{H0} H_0=\left\{e\in H\colon \binom e2\cap G_{1/3}=\emptyset\right\}\quad\mbox{ and }\quad H'=H\setminus H_0,
\end{equation}
that is, $H'$ is a spanning subhypergraph of $H$ with all edges of $H_0$ removed.
Note that every edge of $H'$ contains at least one pair from $G_{1/3}$.

We  build a tight Hamiltonian cycle in $H$ from several small pieces. \emph{Tight paths}
are defined in the same way as tight cycles, but with respect to a \emph{linear} ordering
of the vertices.
From now on we will refer to tight paths
and cycles as paths and cycles, resp.   If $P$ is a path with $t\ge3$ vertices $v_1,\dots, v_t$ and
$t-2$ edges $\{v_1,v_2,v_3\},\dots,\{v_{t-2},v_{t-1},v_t\}$, then we call the ordered pairs
$(v_1,v_2)$ and $(v_t,v_{t-1})$ the \emph{endpairs} of $P$, and we say that $P$ \emph{connects} its
endpairs. The \emph{length} of a path is defined as the
number of its edges and the \emph{order} denotes its number of vertices.

\section{The four pillars}

In this section we prove the four crucial lemmas:  the Connecting Lemma, the Absorbing Lemma, the Reservoir Lemma, and the  Cover Lemma.

\subsection{The Connecting Lemma}

 The connecting lemma in \cite{k} assumes that $\delta_2(H)$ is large and  guarantees a short path between any two ordered pairs of vertices. There is no hope for such a result here, as some pairs may have very small degree, even zero. So, we must be content with connecting just the pairs with large degrees. As a first step we establish  a numerical relation between $\delta(H)$ and $\delta(G_\alpha)$.
 To this end, for all $0<\alpha<c<1$, define
$$g_c(\alpha)=\frac{c-\alpha}{1-\alpha}.$$

\begin{claim}\label{f} Let $0<\alpha<c<1$.
If $\delta(H)\ge c\binom {n-1}2$ then $\delta(G_{\alpha})\ge g_c(\alpha)(n-1)$.
\end{claim}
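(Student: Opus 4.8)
The statement is a counting/averaging claim: a vertex $v$ that has many edges through it cannot afford to have too many neighbours $u$ with small co-degree $\deg_H(u,v)$. I would fix a vertex $v$ and count, in two ways, the edges of $H$ containing $v$, equivalently the edges of the link graph $H(v)$. By \eqref{degdeg} we have $\deg_{H(v)}(u)=\deg_H(u,v)$ for every $u\neq v$, so $|H(v)|=\deg_H(v)\ge c\binom{n-1}{2}$.

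**Main step.** Let $N_v=\{u\neq v:\deg_H(u,v)\ge\alpha(n-2)\}$, i.e.\ $N_v$ is the neighbourhood of $v$ in $G_\alpha$; we want $|N_v|\ge g_c(\alpha)(n-1)$. Split the edges of $H(v)$ according to whether their endpoints lie in $N_v$. A pair $uw\in H(v)$ with $u\notin N_v$ contributes to $\deg_{H(v)}(u)=\deg_H(u,v)<\alpha(n-2)$, so the number of edges of $H(v)$ incident to the set $(V\setminus(\{v\}\cup N_v))$ is at most $\big(n-1-|N_v|\big)\,\alpha(n-2)$ (each such vertex has link-degree $<\alpha(n-2)$; edges with both endpoints outside $N_v$ are counted, harmlessly, at most twice but we only need an upper bound on the count of such edges, so a cleaner bound is: at most $(n-1-|N_v|)\alpha(n-2)$ edges meet $V\setminus(\{v\}\cup N_v)$). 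The remaining edges of $H(v)$ lie entirely inside $N_v$, so there are at most $\binom{|N_v|}{2}$ of them, which is at most $\tfrac12|N_v|(n-2)$. Adding,
\[
c\binom{n-1}{2}\le |H(v)|\le \tfrac12|N_v|(n-2)+\big(n-1-|N_v|\big)\alpha(n-2).
\]
Writing $t=|N_v|/(n-1)$ and dividing by $\binom{n-1}{2}=\tfrac12(n-1)(n-2)$ gives $c\le t+(1-t)\cdot 2\alpha\cdot\tfrac{n-1}{n-1}$; more carefully one gets $c\le t+2\alpha(1-t)\cdot\tfrac{?}{}$ — the bookkeeping with the factor $2$ has to be done honestly, and in fact the right way is to bound the edges inside $N_v$ by $\tfrac12|N_v|(|N_v|-1)\le\tfrac12|N_v|(n-2)$ and the crossing/outside edges by $(n-1-|N_v|)\alpha(n-2)$, yielding after division by $\tfrac12(n-1)(n-2)$
\[
c\le \frac{|N_v|}{n-1}+2\alpha\,\frac{n-1-|N_v|}{n-1}.
\]
This is not quite $g_c(\alpha)$; the clean version uses that each vertex $u\notin N_v$ has link-degree $<\alpha(n-2)$ and each vertex in $N_v$ has link-degree $\le n-2$, so $2|H(v)|=\sum_{u}\deg_{H(v)}(u)\le |N_v|(n-2)+(n-1-|N_v|)\alpha(n-2)$, hence $2c\binom{n-1}{2}\le (n-2)\big(|N_v|+\alpha(n-1-|N_v|)\big)$, i.e.\ $c(n-1)\le |N_v|+\alpha(n-1-|N_v|)=(1-\alpha)|N_v|+\alpha(n-1)$. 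Solving, $|N_v|\ge\frac{c-\alpha}{1-\alpha}(n-1)=g_c(\alpha)(n-1)$, as desired.

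**Obstacle.** There is essentially no hard step here — the whole thing is one double-counting inequality. The only place to be careful is the bookkeeping: which degree bound ($n-2$ or $\alpha(n-2)$) applies to which vertex, and remembering the factor $2$ from $\sum_u\deg_{H(v)}(u)=2|H(v)|$. So I would present it via the handshake identity $\sum_{u\neq v}\deg_H(u,v)=2\deg_H(v)$, bound the sum by splitting according to membership in $G_\alpha\cap$ (link of $v$), and solve the resulting linear inequality for $|N_v|$. Since $v$ was arbitrary this gives $\delta(G_\alpha)\ge g_c(\alpha)(n-1)$.
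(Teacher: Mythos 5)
Your final ``clean version'' is exactly the paper's argument: apply the handshake identity $\sum_{u\neq v}\deg_H(u,v)=2\deg_H(v)\ge 2c\binom{n-1}{2}$ to the link of a vertex, split the sum according to membership of the pair in $G_\alpha$, bound the two parts by $n-2$ and $\alpha(n-2)$ respectively, and solve the resulting linear inequality for the $G_\alpha$-degree. The earlier detour through $\binom{|N_v|}{2}$ is superseded by this and can simply be omitted; the proof is correct and essentially identical to the paper's.
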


\begin{proof} Set $G:=G_{\alpha}$.
 Let $u_0\in V(H)$ satisfy $\deg_G(u_0)=\delta(G)$. Then, by (\ref{degdeg}) we have
$$2\delta(H)\le2|H(u_0)|=\sum_{u\neq u_0}\deg_{H(u_0)}(u)= \sum_{u\neq u_0}\deg_{H}(u,u_0).$$
Breaking the latter sum into two parts: over $uu_0\in G$ and over $uu_0\notin G$, and recalling that $|\{u\colon uu_0\in G\}|=\delta(G)$, we obtain the  inequality
$$c(n-1)(n-2) \le 2\delta(H)\le\sum_{u\neq u_0}\deg_{H}(u,u_0)\le\delta(G)(n-2)+(n-1-\delta(G))\alpha(n-2), $$
from which the required bound follows. 
\end{proof}

We also  need a simple combinatorial inequality which was  observed already in \cite{1112}*{Fact~1}). \begin{claim}\label{BR} For any two finite sets $B$ and $R$, with $|B|\le |R|$, the set
$$
\Pi(B,R)
=\left\{\{b,r\}\colon b\in B, r\in R\right\}
=\left\{e\in\tbinom{B\cup R}2\colon e\cap B\neq\emptyset\mbox{ and }e\cap R\neq\emptyset\right\}$$
has size
$$|\Pi(B,R)|\ge \binom{|B|}2.$$
\end{claim}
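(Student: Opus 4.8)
The plan is to count directly. The set $\Pi(B,R)$ consists of all two-element sets with one endpoint in $B$ and one in $R$; the natural worry is that $B$ and $R$ may overlap, so a pair $\{b,r\}$ with $b\in B$, $r\in R$ could also be counted as having $b\in R$ or $r\in B$, and we must not over- or under-count. So the clean approach is to split according to the overlap. Write $B'=B\setminus R$, $R'=R\setminus B$, and $M=B\cap R$. Then every element of $\Pi(B,R)$ is a pair meeting $B$ and meeting $R$, which forces it to be one of: a pair inside $M$ (meets both since $M\subseteq B\cap R$); a pair with one vertex in $M$ and one in $B'\cup R'$; or a pair with one vertex in $B'$ and one in $R'$. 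These three types are disjoint and exhaust $\Pi(B,R)$.

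Next I would just add up the three counts. The first type contributes $\binom{|M|}{2}$, the second contributes $|M|\bigl(|B'|+|R'|\bigr)$, and the third contributes $|B'|\,|R'|$. Hence
\[
|\Pi(B,R)| = \binom{|M|}{2} + |M|\bigl(|B'|+|R'|\bigr) + |B'|\,|R'|.
\]
Now set $b=|B'|$, $r=|R'|$, $m=|M|$, so that $|B|=b+m$ and $|R|=r+m$, and the hypothesis $|B|\le|R|$ reads $b\le r$. We want to show the right-hand side is at least $\binom{b+m}{2}=\binom{m}{2}+mb+\binom{b}{2}$. Subtracting the common terms $\binom{m}{2}+mb$ from both sides, it suffices to check
\[
mr + br \ge \binom{b}{2},
\]
i.e.\ $r(m+b)\ge \binom{b}{2}$. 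Since $b\le r$ we have $r(m+b)\ge br\ge b\cdot b\ge b(b-1)/2=\binom b2$ (using $b\ge0$), which finishes it. One can even see that equality forces $m=0$ and $b=r\le 1$, so the bound is essentially tight only in trivial cases.

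I do not expect a genuine obstacle here; the only thing to be careful about is the bookkeeping when $B\cap R\neq\emptyset$, which is exactly why I would introduce the decomposition $B=B'\dcup M$, $R=R'\dcup M$ at the outset rather than trying to argue about $\Pi(B,R)$ as an unstructured set. An alternative, slicker route — arguably cleaner to write — is to observe that $\Pi(B,R)\supseteq \Pi(B,B)=\binom{B}{2}$ whenever $B\subseteq R$, and in general to note that choosing the $\binom{|B|}{2}$ pairs all of whose elements lie in $B$ already gives pairs meeting $R$ once we know $|B|\le|R|$; but since $B$ need not be contained in $R$ this needs the same case split, so the explicit count above is the safest presentation.
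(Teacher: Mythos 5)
Your argument is correct and is essentially the paper's proof in different clothing: the paper counts $\Pi(B,R)$ by complementation inside $\binom{B\cup R}{2}$, obtaining $|\Pi(B,R)|=|B|\,|R|-\binom{c+1}{2}$ with $c=|B\cap R|$, which is exactly your total $\binom{m}{2}+m(b+r)+br$, and then uses $|R|\ge|B|\ge c$ in the same one-line estimate that you phrase as $r(m+b)\ge\binom{b}{2}$. The only slip is your parenthetical equality discussion (taking $B=R$ gives $|\Pi(B,R)|=\binom{|B|}{2}$ with $m=|B|$ arbitrary, so equality does not force $m=0$), but that remark is immaterial to the claim.
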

\begin{proof} Let  $c=|B\cap R|$.  Then, as $|R|\ge|B|\ge c$,
\begin{multline*}
|\Pi(B,R)|
	=\binom{|B|+|R|-c}2-\binom{|B|-c}2-\binom{|R|-c}2\\
	=|B|\cdot|R|-\binom{c+1}2\ge{|B|}^2-\binom{|B|+1}2=\binom{|B|}2.\tag*{\qedhere}
\end{multline*}
\end{proof}

We are  now ready to prove the Connecting Lemma.

\begin{lemma}[Connecting Lemma]\label{conn}
There exists $n_{\ref{conn}}$ such that for all $n\ge n_{\ref{conn}}$  the following holds. Let $H$ be an $n$-vertex 3-graph with   $\delta(H)\ge.799\binom {n-1}2$.
Then, for all $e=u_0u_1\in G_{.33}$ and  $f=v_0v_1\in G_{.33}$ with $e\cap f=\emptyset$ there exists a path in $H$
of length 12 connecting the endpairs $(u_0,u_1)$ and $(v_0,v_1)$,
\end{lemma}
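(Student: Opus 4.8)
\emph{Outline of the approach.} I would build the length‑$12$ path out of three tight paths of length $4$ that overlap only in the four ``junction'' vertices: a path $P^u=u_0u_1a_1a_2a_3a_4$ with endpair $(u_0,u_1)$; a path $P^v=v_0v_1b_1b_2b_3b_4$ with endpair $(v_0,v_1)$, internally disjoint from $P^u$; and a two‑vertex \emph{bridge}, i.e.\ vertices $w_1\neq w_2$ outside $V(P^u)\cup V(P^v)$ with $\{a_3,a_4,w_1\},\{a_4,w_1,w_2\},\{w_1,w_2,b_4\},\{w_2,b_4,b_3\}\in H$. Given such $P^u,P^v,w_1,w_2$, the sequence $u_0u_1a_1a_2a_3a_4w_1w_2b_4b_3b_2b_1v_1v_0$ is a tight path of length $4+4+4=12$ connecting $(u_0,u_1)$ and $(v_0,v_1)$. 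Throughout write $c:=0.799$, write $N_H(x,y)=\{z:\{x,y,z\}\in H\}$ (so $|N_H(x,y)|=\deg_H(x,y)$), and $N_G(x)$ for the neighbourhood of $x$ in a graph $G$; since $\delta(H)\ge c\binom{n-1}2$, Claim~\ref{f} gives $\delta(G_\alpha)\ge g_c(\alpha)(n-1)$ for every $\alpha<c$.

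\emph{Building the half‑paths by boosting co‑degrees.} The engine is a single extension move: if a tight path has been built with far endpair $(x,y)$, $xy\in G_\alpha$, using altogether a set $F_0$ of $O(1)$ vertices, then for any $\alpha'<c$ with $\alpha+g_c(\alpha')>1$ we may extend it by a new vertex $z$ with $xz\in G_{\alpha'}$ — indeed $z$ need only lie in $N_H(x,y)\cap N_{G_{\alpha'}}(x)\setminus F_0$, whose size is at least $\alpha(n-2)+g_c(\alpha')(n-1)-(n-1)-|F_0|$, positive for large $n$ precisely when $\alpha+g_c(\alpha')>1$. Unwinding this, one extension moves us from $G_\alpha$ to $G_{\alpha'}$ for any $\alpha'<1-\tfrac{1-c}{\alpha}$. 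Starting from $(u_0,u_1)\in G_{.33}$ and iterating the map $\alpha\mapsto 1-\tfrac{1-c}{\alpha}$ four times (whose relevant fixed point for $c=0.799$ is $\tfrac12(1+\sqrt{4c-3})\approx0.72$), the reachable thresholds are, roughly, $0.33\to0.39\to0.48\to0.58\to0.65$; so four extensions produce $P^u=u_0u_1a_1a_2a_3a_4$ with $a_3a_4\in G_\beta$, where $\beta$ may be taken to be, say, $0.64$, which exceeds the critical value $\sqrt{2(1-c)}=\sqrt{0.402}\approx0.634$. Building $P^v$ in the same way while forbidding the $O(1)$ vertices already used gives $b_3b_4\in G_\beta$ as well.

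\emph{The bridge.} It remains to find an edge $\{w_1,w_2\}$ of the graph $F:=H(a_4)\cap H(b_4)$ with $w_1\in A:=N_H(a_3,a_4)$ and $w_2\in B:=N_H(b_3,b_4)$, with $w_1,w_2$ off the $O(1)$ forbidden vertices; this immediately yields all four bridge edges. Since $|H(a_4)|,|H(b_4)|\ge c\binom{n-1}2$ we get $|F|\ge(2c-1)\binom{n-1}2-O(n)\ge(\tfrac{2c-1}2-o(1))n^2$, and since $|A|,|B|\ge\beta(n-2)$, Claim~\ref{BR} (applied after deleting the forbidden vertices) gives $|\Pi(A,B)|\ge\binom{\lfloor\beta n\rfloor}2-O(n)\ge(\tfrac{\beta^2}2-o(1))n^2$. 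Both $\Pi(A,B)$ and the edge set of $F$ lie inside $\binom{V'}2$ for a vertex set $V'$ of size $n-O(1)$, and because $\beta^2>2(1-c)$ we have $\tfrac{\beta^2}2+\tfrac{2c-1}2>\tfrac12$, so the two families cannot be disjoint; any common pair, labelled so that $w_1\in A$ and $w_2\in B$, completes the bridge and hence the path.

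\emph{Where the difficulty lies.} The argument is tight at both ends, which is essentially why this route cannot go much below $\tfrac45$. First, the booster iteration $\alpha\mapsto1-\tfrac{1-c}{\alpha}$ must carry $0.33$ past $\sqrt{2(1-c)}$, and for $c=0.799$ this requires four steps (three would reach only $\approx0.58<0.634$); this fixes the length of each half‑path, and together with a two‑vertex bridge it is exactly what makes the total length $12$. Second, the bridge step needs precisely $\beta^2+(2c-1)>1$. Both inequalities hold for $c=0.799$ with almost no slack, so the real work is the careful bookkeeping of the lower‑order terms — the $O(1)$ forbidden vertices, the discrepancy between $\binom{n-1}2$ and $n^2/2$, and the choice of each intermediate threshold strictly below its bound — needed to keep every set in sight nonempty for all large $n$. (One can instead meet two length‑$5$ half‑paths at their endpair vertices, which also gives length $12$, but then those endpairs must satisfy cross‑conditions forcing a two‑step look‑ahead while the second half‑path is built, so the freely chosen two‑vertex bridge above is cleaner.)
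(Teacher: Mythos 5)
Your proposal is correct and follows essentially the same route as the paper's own proof: the same co-degree boosting along two $4$-edge half-paths via Claim~\ref{f}, with the identical threshold sequence $.33\to.39\to.48\to.58\to.65$ (you stop at $.64$, which also works), and the same two-vertex bridge found in $H(a_4)\cap H(b_4)$ via Claim~\ref{BR} using $\beta^2+(2c-1)>1$. The differences are only cosmetic (vertex labels, the exact final threshold, and handling the $O(1)$ used vertices by deletion rather than by the paper's ``$+20$'' margins).
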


\begin{proof} We build a connecting path by  making our way from each side, increasing the degrees of the pairs as we go, until they both reach $.65(n-2)$, a quantity that guaranties an immediate connection of the two paths. In doing so we will use a sequence of numbers $\alpha_1,\dots,\alpha_5$ such that $\alpha_1=.33$, $\alpha_5=.65$, and for each $i=2,\dots,5$, with $c=.799$, we have
$$\alpha_i+g_{c}(\alpha_{i+1})>1.$$
By inspection we found that $\alpha_2=.39, \alpha_3=.48$, and $\alpha_4=.58$ satisfy these requirements.
 Below we use Claim \ref{f} repeatedly. For brevity, we write $g(\alpha)$ for $g_{.799}(\alpha)$.
 Moreover, for a graph 
  $G$ and a vertex $v$ we denote by $N_G(v)$ the neighbourhood of $v$ in $G$ and, similarly, we denote by 
  $N_H(u,v)=\{x\in V(H)\colon \{x,u,v\}\in E(H)\}$ the neighbours of the pair $uv$ in the $3$-graph~$H$. 
 \begin{itemize}
 \item Observe that, in view of Claim \ref{f},
 $$|N_{G_{.39}}(u_1)|\ge \delta(G_{.39})\ge g(.39)(n-1)=\tfrac{409}{610}(n-1)\ge.67(n-2)+20,$$
 where the last inequality holds for sufficiently large $n$.
Thus,
$$|N_H(u_0,u_1)|+|N_{G_{.39}}(u_1)|\ge .33(n-2)+.67(n-1)+20>n+18,$$
implying that
$$|N_H(u_0,u_1)\cap N_{G_{.39}}(u_1)|\ge|N_H(u_0u_1)|+|N_{G_{.39}}(u_1)|-(n-1)\ge20.$$
Consequently,
 there exists a vertex $u_2\not\in\{v_0,v_1\}$ such that $\{u_0,u_1,u_2\}\in H$ and $u_1u_2\in G_{.39}$.

 \item Next, since
 $$\delta(G_{.48})\ge g(.48)(n-1)=\tfrac{319}{520}(n-1)\ge.61(n-2)+20,$$
a
similar argument yields that
$$|N_H(u_1,u_2)\cap N_{G_{.39}}(u_2)|\ge20,$$
implying
the existence
 of  a vertex $u_3\not\in\{u_0,v_0,v_1\}$ such that $\{u_1,u_2,u_3\}\in H$ and $u_2u_3\in G_{.48}$.

 \item Analogously, since
 $$\delta(G_{.58})\ge g(.58)(n-1)=\tfrac{219}{420}(n-1)\ge.52(n-2)+20,$$
  there exists a vertex $u_4\not\in\{ u_0,u_1,v_0,v_1\}$ such that $\{u_2,u_3,u_4\}\in H$ and \newline$u_3u_4\in G_{.58}$.

 \item Finally, since
 $$\delta(G_{.65})\ge g(.65)(n-1)=\tfrac{149}{350}(n-1)\ge.42(n-2)+20,$$
  there exists a vertex $u_5\not\in\{u_3,u_4,v_0,v_1\}$ such that $\{u_3,u_4,u_5\}\in H$ and \newline$u_4u_5\in G_{.65}$.
 \end{itemize}
Hence, we have created a 4-edge path $P_u=u_0u_1\dots u_5$ in $H$ with $V(P_u)\cap f=\emptyset$ and $u_4u_5\in G_{.65}$.

 In a similar fashion we build a path $P_v=v_0\dots v_5$ which avoids all vertices of $P_u$ and such that also $\{v_4,v_5\}\in G_{.65}$.
 The additional $+20$ guarantees, with a margin,  that even  when choosing the last vertex, $v_5$, we can still avoid the already selected vertices.
To connect the two paths together, let us consider the intersection of link graphs of $u_5$ and $v_5$
$$I=H(u_5)\cap H(v_5).$$
Owing to the assumption $\delta(H)\ge.799\binom {n-1}2$ we have
\begin{equation}\label{EL}
|I|\ge|H(u_5)|+|H(v_5)|-|H(u_5)\cup H(v_5)|\ge2\delta(H)-\binom n2\ge .598\binom n2+O(n).
\end{equation}
Set
$$B=N_H(u_4,u_5)\quad\mbox{ and }\quad R=N_H(v_4,v_5)$$ and assume as we may
that $|B|\le |R|$. Let $F$ be the set of pairs $xy\in \binom{V(H)}2$ such that $\{u_4,u_5,x\}\in H$ and $\{v_4,v_5,y\}\in H$, that is,  the set of pairs of vertices  with one vertex belonging to $B$ and the other to $R$.
By Claim \ref{BR}, $|F|\ge \binom{|B|}2$ and, since $u_4u_5\in G_{.65}$, we have
$|B|\ge.65(n-2).$ Consequently,
$$|F|\ge\binom{|B|}2\ge (.65)^2\binom n2+O(n).$$
Since $.598+(.65)^2>1.02$, it follows from (\ref{EL}) and the above bound that
$$|F\cap I|\ge|F|+|I|-\binom n2>.02\binom n2+O(n),$$
which, for sufficiently large $n$, is greater than $8n$, which is an upper bound on the number of pairs  $\{x,y\}\in F\cap I$ with $\{x,y\}\cap V(P_u\cup P_v)\neq \emptyset$.
We conclude that there exist two vertices $x=u_6$ and $y=v_6$ different from all $u_0,\dots,u_5,v_0\dots,v_5$, such that all four triples $\{u_4,u_5,u_6\},\{u_5,u_6,v_6\}$, $\{u_6,v_6,v_5\},\{v_6,v_5,v_4\}$ are edges of $H$. Hence, a path between the endpairs $(u_0,u_1)$ and $(v_0,v_1)$ of length 12 can be completed (see Figure~\ref{fig:1}).
\end{proof}

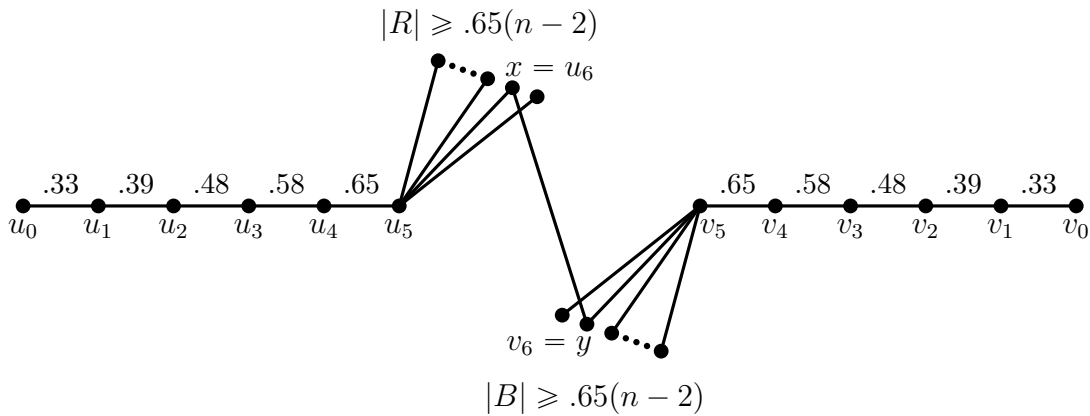
\begin{figure}
\begin{center}
\begin{tikzpicture} 
  	\foreach \x in {0,1,...,5}{
 		\fill (\x*1,0) circle (2.8pt); 
		\node at (\x*1,-0.3) {$u_\x$};		
	}
	\draw[very thick] (0,0) -- (5,0);	
	\node at (0.5,0.3) {\small$.33$};
	\node at (1.5,0.3) {\small$.39$};
	\node at (2.5,0.3) {\small$.48$};
	\node at (3.5,0.3) {\small$.58$};
	\node at (4.5,0.3) {\small$.65$};
	
	\coordinate (r1) at ($(5,0)+(75:2)$);
	\coordinate (r2a) at ($(r1)+(-20:0.2)$);
	\coordinate (r2b) at ($(r2a)+(-20:0.15)$);
	\coordinate (r2c) at ($(r2b)+(-20:0.15)$);
	\coordinate (r2) at ($(r2c)+(-20:0.2)$);
	\coordinate (r3) at ($(r2)+(-20:0.35)$);
	\coordinate (r4) at ($(r3)+(-20:0.35)$);
	
	\foreach \x in {1,2,...,4}{
		\fill (r\x) circle (2.8pt);
		\draw[very thick] (5,0) -- (r\x);
	}
	
	\fill (r2a) circle (1.2pt);
	\fill (r2b) circle (1.2pt);
	\fill (r2c) circle (1.2pt);
	
	\node at (6.2,2.4) {$|R|\geq .65(n-2)$};

	\foreach \x in {0,1,...,4}{
 		\fill (14-\x*1,0) circle (2.8pt); 
		\node at (14-\x*1,-0.3) {$v_\x$};		
	}
	\fill (9,0) circle (2.8pt);
	\node at (9.18,-0.3) {$v_5$};
	
	\draw[very thick] (14,0) -- (9,0);
	\node at (13.5,0.3) {\small$.33$};
	\node at (12.5,0.3) {\small$.39$};
	\node at (11.5,0.3) {\small$.48$};
	\node at (10.5,0.3) {\small$.58$};
	\node at (9.5,0.3) {\small$.65$};
	
	\coordinate (b1) at ($(9,0)+(255:2)$);
	\coordinate (b2a) at ($(b1)+(160:0.2)$);
	\coordinate (b2b) at ($(b2a)+(160:0.15)$);
	\coordinate (b2c) at ($(b2b)+(160:0.15)$);
	\coordinate (b2) at ($(b2c)+(160:0.2)$);
	\coordinate (b3) at ($(b2)+(160:0.35)$);
	\coordinate (b4) at ($(b3)+(160:0.35)$);

	\foreach \x in {1,...,4}{
		\fill (b\x) circle (2.8pt);
		\draw[very thick] (9,0) -- (b\x);
	}
	
	\fill (b2a) circle (1.2pt);
	\fill (b2b) circle (1.2pt);
	\fill (b2c) circle (1.2pt);
	
	\node at (7.6,-2.55) {$|B|\geq .65(n-2)$};
	
	\draw[very thick] (b3) -- (r3);
	\node at ($(r3)+(0.5,0.22)$) {$x=u_6$};
	\node at ($(b3)+(-0.5,-0.3)$) {$v_6=y$};
\end{tikzpicture}
\end{center}
\caption{Underlying pairs of the tight path of length 12 in $H$ connecting the pairs $u_0u_1$ with $v_1v_0$ from $G_{.33}$.}
\label{fig:1}
\end{figure}

\subsection{The Absorbing Lemma}

We first define absorbing path and introduce the notion of an absorber in this context.

\begin{defin}\label{AP} We call a path $A$ in $H$ \emph{$m$-absorbing} if
 for every subset
$U\subset V(H)\setminus V(A)$ of size  $|U|\le m$
 there is a path $A_U$ in $H$  with
$V(A_U)=V(A)\cup U$ and with the same  endpairs as $A$.
\end{defin}

Recall that a pair of vertices in $H$ is called  \emph{$\alpha$-large} if it belongs to $G_{\alpha}$.

\begin{defin}\label{XA}
 Given a vertex $x\in V(H)$, an \emph{$x$-absorber of order $i$} is a path $P=v_1\dots v_i$ in $H$ such that the graph path $Q=v_1\dots v_i$ is a subgraph of the link graph $H(x)$ and the  endpairs $v_1v_2$ and $v_{i}v_{i-1}$ of $P$ belong  to $G_{1/3}$ (see Figure~\ref{fig:2}).
\end{defin}

\begin{figure}
\begin{center}
\begin{tikzpicture}

\begin{pgfonlayer}{front}
\coordinate (x) at (0,0);
\fill (x) circle (2.8pt);

\foreach \i in {1,2,...,5}{
	\coordinate (v\i) at ($(0,9.5)+(234+12*\i:12)$);	
	\fill (v\i) circle (2.8pt);
	\node at ($(v\i)-(0,0.8)$) {$v_\i$};
}

\foreach \i in {1,2,3,4}{
	\draw[very thick] (v\i) -- (v\number\numexpr\i+1\relax);
}

\node at ($0.5*($(v1)+(v2)$)+(0,0.28)$) {\small$1/3$};
\node at ($0.5*($(v4)+(v5)$)+(0,0.28)$) {\small$1/3$};

\end{pgfonlayer}

\hedge{(x)}{(v2)}{(v1)}{6pt}{1.5}{}{black!40!white,opacity=0.3};
\hedge{(x)}{(v5)}{(v4)}{6pt}{1.5}{}{black!40!white,opacity=0.3};
\hedge{(x)}{(v3)}{(v2)}{7.4pt}{1.5}{}{black!40!white,opacity=0.3};
\hedge{(x)}{(v4)}{(v3)}{7.4pt}{1.5}{}{black!40!white,opacity=0.3};

\hedge{(v1)}{(v3)}{(v2)}{12pt}{1.5}{green!60!black}{green!80!black,opacity=0.3};
\hedge{(v3)}{(v5)}{(v4)}{12pt}{1.5}{green!60!black}{green!80!black,opacity=0.3};
\hedge{(v2)}{(v4)}{(v3)}{9.8pt}{1.5}{blue!40!black}{blue!60!black,opacity=0.3};

\end{tikzpicture}
\end{center}
\caption{An $x$-absorber of order $5$ on $v_1,\dots,v_5$ with $v_1v_2$ and $v_4v_5$ from $G_{1/3}$.}
\label{fig:2}
\end{figure}
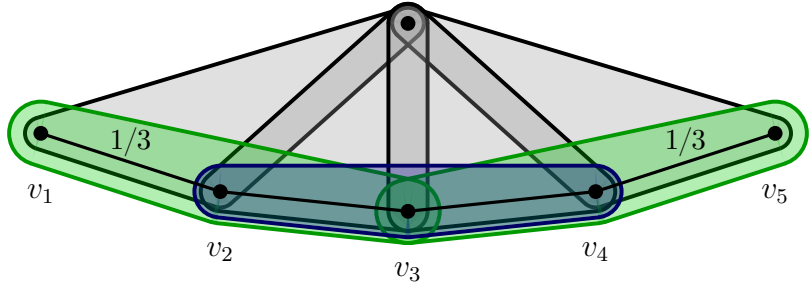

The Absorbing Lemma states that every 3-graph $H$ with sufficiently large minimum degree contains a relatively short absorbing path which may absorb  a small fraction of the vertices of $H$.

\begin{lemma}[Absorbing Lemma]\label{abs59} There exists $\gamma_0>0$ and $n_{\ref{abs59}}$ such that for every $0<\gamma<\gamma_0$ and every $n\ge n_{\ref{abs59}}$  the following is true. If $\delta(H)\ge.8\binom{n-1}2$, then
there exists in $H$ a $\gamma^{2}n$-absorbing path $A$ with $(1/3)$-large endpairs  and  with $|V(A)|\le\gamma n$.
\end{lemma}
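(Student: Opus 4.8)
The standard absorbing-method paradigm goes as follows: for each vertex $x\in V(H)$, show that $x$ has \emph{many} absorbers (in the sense of Definition~\ref{XA}), say at least $\beta n^{i-1}$ of them for some fixed small $\beta>0$ and fixed order $i$; then pick a random family $\mathcal{F}$ of vertex sets of candidate absorbers, show by a first- and second-moment (or Chernoff + Markov) argument that with positive probability $\mathcal{F}$ is small ($|\mathcal{F}|\le\gamma^2 n$, say), internally vertex-disjoint, and such that every $x\in V(H)$ is covered by at least $\gamma^2 n$ sets of $\mathcal{F}$ each of which actually forms an $x$-absorber; then string the absorbers in $\mathcal F$ together into a single path $A$ using the Connecting Lemma~\ref{conn} to bridge consecutive absorbers. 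The resulting path has $|V(A)|\le\gamma n$ (each absorber has bounded order, each connection has length $12$), and its endpairs can be arranged to be $(1/3)$-large. Absorption of a set $U$ with $|U|\le\gamma^2 n$ is then done greedily: the vertices of $U$ are distributed among distinct absorbers covering them, and each $x\in U$ is inserted into its chosen $x$-absorber $P=v_1\dots v_i$ by replacing $P$ with the longer path whose underlying graph-path on $v_1,\dots,v_i$ is the \emph{same}, but which now routes through $x$ — concretely, since $v_1\dots v_i$ is a path in the link graph $H(x)$, the triples $\{v_j,v_{j+1},x\}$ are all edges of $H$, so one can reroute, e.g.\ $v_1v_2v_3\dots \rightsquigarrow v_1v_2xv_3\dots$ or more carefully interleave $x$ so that the endpairs $v_1v_2$ and $v_{i-1}v_i$ are preserved; one should check that an absorber of order around $5$ (as in Figure~\ref{fig:2}) suffices to absorb exactly one vertex while keeping endpairs fixed.

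\textbf{The combinatorial heart: counting $x$-absorbers.}
The genuinely new obstacle here, compared to the large-co-degree setting of~\cite{k}, is proving that \emph{every} vertex $x$ has $\Omega(n^{i-1})$ absorbers, because we only know $\delta_1(H)\ge.8\binom{n-1}2$ and co-degrees may be tiny. Fix $x$. Its link graph $H(x)$ is a graph on $n-1$ vertices with $|H(x)|=\deg_H(x)\ge.8\binom{n-1}2$ edges, i.e.\ edge density $\ge.8$. So $H(x)$ has $\ge\rho(2\rho-1)\frac{n^3}6$ triangles with $\rho=.8$ by Lemma~\ref{NS}, in fact it is very dense and contains $\Omega(n^i)$ copies of the graph-path $v_1\dots v_i$ for any fixed $i$. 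The subtlety is the extra requirement that the two endpairs $v_1v_2$ and $v_{i-1}v_i$ lie in $G_{1/3}$. Here I would invoke Claim~\ref{f}: with $c=.8$ and $\alpha=1/3$ we get $\delta(G_{1/3})\ge g_{.8}(1/3)(n-1)=\frac{.8-1/3}{1-1/3}(n-1)=.7(n-1)$, so $G_{1/3}$ is itself a very dense graph. The plan is to count ordered $i$-tuples $(v_1,\dots,v_i)$ such that (a) consecutive pairs span a path in $H(x)$ and (b) $v_1v_2, v_{i-1}v_i\in G_{1/3}$; one first picks $v_1v_2\in G_{1/3}\cap H(x)$ — this intersection is nonempty and in fact large because both graphs have density bounded well away from $1/2$, so by inclusion–exclusion $|G_{1/3}\cap H(x)|\ge(.8+.7\cdot\tfrac{n-1}{n} - 1)\binom{n}{2}+O(n)=\Omega(n^2)$ — then extends greedily through $v_3,\dots,v_{i-2}$ inside $H(x)$ (each step has $\Omega(n)$ choices since $H(x)$ has min-degree $\Omega(n)$ after a tiny shaving, or just by density + convexity), and finally closes with an edge $v_{i-1}v_i\in G_{1/3}\cap H(x)$ having $v_{i-2}v_{i-1}\in H(x)$; the last step again uses that both $G_{1/3}$ and $H(x)$ meet the neighbourhood of $v_{i-2}$ in $\Omega(n)$ common vertices. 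This yields $\Omega(n^{i-1})$ absorbers per $x$.

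\textbf{The probabilistic selection and assembly.}
Given the counting bound, choose each potential absorber-vertex-set independently with probability $p=\Theta(n^{-i+2})$ or, cleaner, sample a random set $\mathcal{F}_0$ of $\Theta(n)$ vertex-$i$-tuples uniformly; standard concentration gives $|\mathcal{F}_0|=\Theta(n)$, expected number of intersecting pairs $o(n)$, and for each $x$ the number of members of $\mathcal F_0$ that are genuine $x$-absorbers is $\Theta(1)$ times $n\cdot(\text{absorber density})\ge\gamma^{-1}\gamma^2 n$-ish in expectation with good concentration; deleting one tuple from each intersecting pair and throwing away the few bad tuples leaves a disjoint family $\mathcal F$ with $|\mathcal F|\le\gamma^2 n$ (after adjusting constants) such that each $x$ is still in $\ge\gamma^2 n$ absorbers of $\mathcal F$. (One must double-check $\gamma^2 n$ absorbers survive for \emph{every} $x$ simultaneously via a union bound — this forces the exponential tail from Chernoff, which is why we sample $\Theta(n)$ tuples rather than $\Theta(\gamma^2 n)$.) Finally, order the absorbers in $\mathcal{F}$ arbitrarily as $P^{(1)},\dots,P^{(s)}$; each $P^{(j)}$ has $(1/3)$-large, hence $.33$-large, endpairs, so also $P^{(1)}$ has a $.33$-large endpair to serve as the free end of $A$, and consecutive $P^{(j)}, P^{(j+1)}$ are connected by a length-$12$ path from Lemma~\ref{conn} through fresh vertices (there are $\Omega(n)$ to spare since $|V(A)|\le\gamma n\ll n$), taking care that the connecting paths avoid all previously used vertices. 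The concatenation $A$ has $(1/3)$-large endpairs (the outer endpairs of $P^{(1)}$ and $P^{(s)}$, which lie in $G_{1/3}$ by Definition~\ref{XA}), has $|V(A)|\le s\cdot i + 11 s \le\gamma n$ for suitable $\gamma$, and is $\gamma^2 n$-absorbing by the greedy argument above. The main obstacle, to reiterate, is step two — establishing the $\Omega(n^{i-1})$ lower bound on $x$-absorbers using only the vertex-degree hypothesis, which is exactly where the density bounds from Lemma~\ref{NS} and Claim~\ref{f} (the $.7(n-1)$ estimate on $\delta(G_{1/3})$) must be combined so that the intersections $G_{1/3}\cap H(x)$ and the iterated neighbourhood intersections stay $\Omega(n)$ at every step.
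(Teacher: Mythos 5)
Your overall architecture (count many absorbers per vertex, randomly select a small vertex-disjoint family covering every vertex many times, connect the pieces with Lemma~\ref{conn}, absorb greedily) is the paper's, and your probabilistic selection and assembly steps are essentially identical to what the paper does. The genuine gap is in what you yourself call the combinatorial heart. An $x$-absorber in the sense of Definition~\ref{XA} must satisfy three conditions simultaneously: it is a \emph{tight path in $H$} (each consecutive triple $\{v_j,v_{j+1},v_{j+2}\}$ is a hyperedge of $H$), its consecutive pairs lie in the link graph $H(x)$, and its endpairs lie in $G_{1/3}$. Your count only enforces the last two: you count graph paths of $H(x)$ with $1/3$-large endpairs, using the density of $H(x)$ and $\delta(G_{1/3})\ge .7(n-1)$ from Claim~\ref{f}. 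That does not make the tuple a tight path in $H$, and it must be one, since the absorbers are segments of the absorbing path $A$ before any absorption happens (and the triples of the absorber that are not rerouted through $x$ must survive afterwards). With only $\delta_1(H)\ge .8\binom{n-1}{2}$, co-degrees may be zero, so knowing $v_jv_{j+1}\in H(x)$, i.e.\ $\{x,v_j,v_{j+1}\}\in H$, gives no control over whether $\{v_j,v_{j+1},v_{j+2}\}\in H$; the density of $H(x)$ alone cannot supply this.

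This is precisely the difficulty the paper's proof is organised around. It passes to $H'$ (edges containing a $1/3$-large pair, see~\eqref{H0}, with $\delta(H')\ge .709\,n^2/2$ by~\eqref{H'H}), and then counts triples that are \emph{simultaneously} triangles of the link $H'(x)$ and hyperedges of $H'$: Lemma~\ref{NS} gives at least $.296\binom{n}{3}$ triangles, and inclusion--exclusion against $|H'|\ge .709\binom{n}{3}$ leaves $|T^x\cap H'|\ge .005\binom{n}{3}$ (Claim~\ref{tx}); Lemma~\ref{erdos} then yields $cn^6$ copies of $K_{2,2,2}$ inside $T^x\cap H'$, and Claim~\ref{456} extracts from each such copy a path of order $4$ or $5$ that is a tight path in $H'\subseteq H$, has all consecutive pairs in $H(x)$, and has $1/3$-large endpairs --- a genuine $x$-absorber, uniformly for every $x$ for which the copy is $x$-friendly. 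Your inclusion--exclusion between $H(x)$ and $G_{1/3}$ never forms the crucial intersection with the edge set of $H$ (or $H'$) itself, so the tuples you count need not be usable at all; to repair the argument you would have to redo the counting along these lines (or find a substitute for Claim~\ref{tx} and the $K_{2,2,2}$ device), and this is the step where the hypothesis $.8\binom{n-1}{2}$ is actually consumed.
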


We will build an absorbing  path $A$ by connecting, via Lemma \ref{conn},   
disjoint absorbers of order 4 or 5, by paths of length 12.
For the proof of Lemma~\ref{abs59} we need Claims~\ref{tx} and~\ref{456} stated below.

Recall the definition of the subhypergraph $H'\subseteq H$ in~\eqref{H0}.
Fix a vertex $x\in V$ and let~$T^x$ stand for the family of the vertex  sets of all triangles in the link graph $H'(x)$.

\begin{claim}\label{tx}
 There exists some $c>0$ such that for every $x\in V$ with $|V|=n$ sufficiently large we have
\begin{enumerate}[label=\alabel]
\item\label{it:a} $|T^x|\ge.296262\binom{n-1}3$,
\item\label{it:b} $|T^x\cap H'|\ge0.005\binom{n-1}3$, and
\item\label{it:c} $T^x\cap H'$ contains at least $cn^6$ copies of $K_{2,2,2}$.\end{enumerate}
\end{claim}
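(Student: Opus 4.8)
The plan is to prove the three parts in order, each building on the previous one. First I would establish \ref{it:a}. The link graph $H'(x)$ is obtained from $H(x)$ by deleting those edges $vw$ for which $\{x,v,w\}\in H_0$; such an edge must have all three of its pairs outside $G_{1/3}$, in particular $xv,xw\notin G_{1/3}$, meaning $\deg_H(x,v)<(n-2)/3$ and similarly for $w$. So the deleted edges of $H(x)$ all live inside the set $S=\{v\colon xv\notin G_{1/3}\}$ of ``small-codegree'' partners of $x$. By a counting argument analogous to Claim~\ref{f} (summing $\deg_H(x,v)$ over $v$ and using $\delta(H)\ge.8\binom{n-1}2$), the set $S$ cannot be too large; in fact $|V\setminus\{x\}\setminus S|=\deg_{G_{1/3}}(x)\ge g_{.8}(1/3)(n-1)=\tfrac{7}{10}(n-1)$, so $|S|\le\tfrac3{10}(n-1)$. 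Meanwhile $e(H(x))=\deg_H(x)\ge.8\binom{n-1}2$, so $H'(x)$ has at least $.8\binom{n-1}2-\binom{|S|}2\ge\big(.8-(.3)^2\big)\binom{n-1}2+O(n)=.71\binom{n-1}2+O(n)$ edges on $n-1$ vertices. Then Lemma~\ref{NS} with $\rho=.71$ gives at least $\rho(2\rho-1)\tfrac{(n-1)^3}{6}=.71\cdot.42\cdot\binom{n-1}3\cdot(1+o(1))$ triangles, and $.71\times.42=.2982>.296262$, so \ref{it:a} follows for $n$ large. (The precise constant $.296262$ presumably comes from optimizing these estimates slightly more carefully, e.g.\ keeping the $\binom{|S|}2$ term exact or using the sharper triangle count, but the above rough version already clears the stated bound.)

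For \ref{it:b} I would count, among the $|T^x|$ triangles of $H'(x)$, how many are themselves edges of $H'$ — equivalently, of $H$, since a triangle $vwz$ of $H'(x)$ that happens to lie in $H$ has $vw\in H'(x)\subseteq H(x)$, hence $xv,xw\in G_{1/3}$ or... actually simpler: $\{v,w,z\}\in H$ together with one of its pairs being in $G_{1/3}$ makes it an edge of $H'$. The idea is to bound from above the number of triangles of $H'(x)$ that are \emph{non-edges} of $H$. For a fixed pair $vw\in H'(x)$, the vertices $z$ completing it to a triangle of $H'(x)$ but with $\{v,w,z\}\notin H$ number at most $(n-2)-\deg_H(v,w)$. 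Summing over the $\ge.71\binom{n-1}2$ edges $vw$ of $H'(x)$ and using the global degree bound $\delta(H)\ge.8\binom{n-1}2$ to control $\sum_{vw}\deg_H(v,w)$ (each vertex has codegree sum $\ge 1.6\binom{n-1}2$ over its partners), one sees the deficiency $\sum_{vw\in H'(x)}\big((n-2)-\deg_H(v,w)\big)$ is small enough that at least a $.005$-fraction (of $\binom{n-1}3$) of the triangles survive as edges. This is the step I expect to be the main obstacle: the count in \ref{it:a} is already close to the extremal threshold $\rho(2\rho-1)$, so there is little slack, and one must be careful to pair up the triangle count with the edge-codegree deficiency correctly — very likely via a double-counting of ``cherries'' $vw z$ with $vw\in H'(x)$, combined with convexity to push the worst case to a structured configuration, rather than a naive union bound. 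One plausible route: show that if fewer than $.005\binom{n-1}3$ triangles of $H'(x)$ were edges of $H$, then $H$ would have too many non-edges through the heavy pairs of $H'(x)$, contradicting $\delta(H)\ge.8\binom{n-1}2$.

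Finally \ref{it:c} is the easy part: once \ref{it:b} gives $|T^x\cap H'|\ge.005\binom{n-1}3\ge d(n-1)^3$ edges of the $3$-graph $T^x\cap H'$ on $n-1$ vertices (with $d=.005/6$), Lemma~\ref{erdos} applied with $h=2$ and this value of $d$ yields a constant $c>0$ and $n_{\ref{erdos}}$ such that $T^x\cap H'$ contains at least $c(n-1)^6\ge (c/2)n^6$ copies of $K_{2,2,2}$ for $n$ large; renaming the constant gives the claim. Since this constant $c$ does not depend on $x$ (it comes only from $d$ and $h$ in Lemma~\ref{erdos}), the same $c$ works uniformly for all $x\in V$, as required by the statement.
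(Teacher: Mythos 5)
Parts \ref{it:a} and \ref{it:c} of your plan are fine and essentially coincide with the paper's argument: the deleted link edges lie inside the set of non-$G_{1/3}$-partners of $x$, whose size is controlled by Claim~\ref{f}, so $e(H'(x))\ge .709\,n^2/2$, and Lemma~\ref{NS} plus Lemma~\ref{erdos} (with $h=2$, $d$ depending only on the constant from \ref{it:b}) do the rest, uniformly in $x$.

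The gap is in part \ref{it:b}, which you yourself flag as the main obstacle but do not close. Your route --- bounding, pair by pair, the triangles of $H'(x)$ whose vertex set is not an edge of $H$ via the deficiencies $(n-2)-\deg_H(v,w)$ --- runs into two problems. First, you need membership in $H'$, not merely in $H$: a triangle of $H'(x)$ that happens to be an edge of $H$ may still lie in $H_0$ (each of the three link edges $\{x,v,w\}\in H'$ only guarantees that \emph{some} pair among $xv,xw,vw$ is $1/3$-large, not that a pair inside $\{v,w,z\}$ is), so the triples in $H_0$ must be excluded as well, and a crude count gives $|H_0|$ up to about $.1\binom n3$. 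Second, the deficiency sum itself is only controlled globally: $\sum_{vw}\bigl((n-2)-\deg_H(v,w)\bigr)\le 3(1-.8)\binom n3$, and since you cannot prevent this deficiency from concentrating on pairs of $H'(x)$, the resulting bound on triangles of $H'(x)$ outside $H$ is about $.2\binom n3$; combined with $|T^x|\ge .2963\binom n3$ and the $H_0$ correction, the slack disappears and the naive count does not yield $.005\binom{n-1}3$. The paper avoids all of this with one global observation you did not use: from \eqref{H'H} one gets not only $e(H'(x))\ge .709\,n^2/2$ but also $|H'|\ge \delta(H')\,\tfrac n3\ge .709\binom n3$, and since both $T^x$ and $H'$ are families of triples from an $n$-set, inclusion--exclusion gives
\[
|T^x\cap H'|\;\ge\;|T^x|+|H'|-\binom n3\;\ge\;(.296262+.709-1)\binom n3\;>\;.005\binom n3 ,
\]
which is exactly \ref{it:b}. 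So the missing idea is to intersect $T^x$ with $H'$ \emph{globally} using the minimum degree of $H'$, rather than to argue edge by edge inside the link of $x$.
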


\begin{proof}
By the definition of  $H'\subseteq H$ and by
Claim \ref{f},
\begin{align}
\delta(H')
\ge\delta(H)-\binom{n-1-\delta(G_{1/3})}2
&\ge\delta(H)-\binom{n-1-g_{.8}(1/3)(n-1)}2\nonumber\\
&\ge .8\binom{n-1}2-\binom{.3(n-1)}2\label{H'H}
\ge.709\frac{n^2}2,\end{align}
for large~$n$.
 The link graph $H'(x)$ has at least $\delta(H')\ge.709n^2/2$ edges and by Lemma \ref{NS},
$$|T^x|\ge.709(2\cdot(.709)-1)\frac{n^3}6\ge.296262\binom{n}3.$$ As
$$|H'|\ge\delta(H')\frac{n}{3}\ge.709\frac{n^3}6\ge.709\binom n3,$$ we have

\begin{align*}|T^x\cap H'|\ge|T^x|+|H'|-|T^x\cup H'|\ge( .296262+.709-1)\binom {n}3> 0.005\binom {n}3.\end{align*}
Therefore, by Lemma \ref{erdos} with $d=0.005$ and $h=2$, there exists a constant $c$ such that for $n\ge n_{\ref{erdos}}$ there are at least $cn^6$ copies of $K_{2,2,2}$ in $T^x\cap H'$. 
\end{proof}

Recall that $e\in H'$ if $\binom e3\cap G_{1/3}\neq\emptyset$, and  $e\in T^x$ if the vertices of $e$ form a triangle in~$H'(x)$.
We will need the following notions.

\begin{defin}\label{friends}
 \noindent Let $x$ be a vertex of $H$.

 \begin{itemize}

 \item  Every copy of $K_{2,2,2}$ contained in the 3-graph $T^x\cap H'$  is  called \emph{$x$-friendly}.

 \item For a copy $K$ of $K_{2,2,2}$ in $H'$, let $S_K$ denote the set of all vertices $x\in V(H)$ for which $K$ is $x$-friendly.

 \end{itemize}
 \end{defin}

\begin{claim}\label{456}
 Every copy $K$ of $K_{2,2,2}$ in $H'$ contains a path of order 4 or 5, which is an $x$-absorber  for every $x\in S_K$.
\end{claim}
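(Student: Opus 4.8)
The plan is to exhibit, inside any copy $K$ of $K_{2,2,2}$ living in $H'$, an explicit short tight path whose vertex sequence happens to be a path in the link graph $H'(x)$ for \emph{every} $x\in S_K$, and whose two endpairs lie in $G_{1/3}$. The key observation is that $K$ being $x$-friendly means exactly that the six vertices of $K$ span a triangle in $H'(x)$, i.e. the vertex set of $K$ is in $T^x$; but $T^x$ is a family of $3$-element sets (the vertex sets of triangles in $H'(x)$), so ``$K$ forms a triangle in $H'(x)$'' should be read through the $K_{2,2,2}$-structure: writing the three parts of $K$ as $\{a_1,a_2\}$, $\{b_1,b_2\}$, $\{c_1,c_2\}$, the condition $K\subseteq T^x\cap H'$ says every transversal triple $\{a_i,b_j,c_k\}$ is an edge of $H$ that is also a triangle of $H'(x)$ — in particular every pair among $a_i,b_j,c_k$ is an edge of the graph $H'(x)$. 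Hence in the link graph $H'(x)$ the six vertices of $K$ induce a complete tripartite graph $K_{2,2,2}$ (all $12$ cross-pairs present), for every $x\in S_K$ simultaneously, since this is a property of $K$ alone, not of $x$.

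With that in hand, the first step is to recall from Definition~\ref{XA} what must be verified: a path $P=v_1\dots v_i$ (with $i\in\{4,5\}$) that is (a) a tight path in $H$, (b) whose underlying vertex sequence is a graph path in $H(x)$ — and here we will have it in $H'(x)\subseteq H(x)$ — and (c) whose endpairs $v_1v_2$ and $v_iv_{i-1}$ lie in $G_{1/3}$. So I need to pick an ordering of (most of) the six vertices of $K$ that threads through the three parts so that consecutive pairs are always cross-pairs (automatic, since any two vertices in different parts are adjacent in $H'(x)$, and I just avoid putting two same-part vertices consecutively), that consecutive \emph{triples} are transversal (hence edges of $H$, as all $8$ transversals of $K$ are edges of $H$), and that both endpairs are pairs that $H'$ already certifies to be $1/3$-large. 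Since every edge of $H'$ contains at least one pair from $G_{1/3}$, each transversal edge $\{a_i,b_j,c_k\}$ of $K$ has a $G_{1/3}$-pair among its three pairs; the remaining bookkeeping is to arrange the ordering so that the two end-edges of the path contribute such a pair \emph{as the endpair}. This is a small finite case analysis: depending on which of the three pair-``slots'' of an end-edge is the guaranteed large one, one either takes a $4$-vertex path or, when forced, inserts the sixth vertex to shift the large pair into endpair position, giving order $5$. I expect that one universal choice works: order the vertices as $a_1,b_1,c_1,a_2,b_2$ (order $5$, using five of the six vertices) or a symmetric variant, and then verify the two end-edges $\{a_1,b_1,c_1\}$ and $\{a_2,b_2,c_1\}$ each supply a $G_{1/3}$ endpair by relabelling within the parts.

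The main obstacle is precisely this last point: the definition of $H'$ only guarantees \emph{some} pair of each edge is $1/3$-large, and we do not get to choose which one — so for a fixed labelling an end-edge might have its large pair be the ``middle'' pair $v_2v_3$ rather than the endpair $v_1v_2$. The resolution is that within each part of $K_{2,2,2}$ the two vertices are interchangeable and there are several transversal edges meeting at an endpoint, so there is enough freedom (by permuting the labels $1\leftrightarrow 2$ inside parts, and by choosing which transversal edge to make the terminal edge of the path) to force the guaranteed large pair into the endpair slot at both ends; when the two ends cannot be fixed simultaneously with four vertices, using the fifth vertex as the middle vertex $v_3$ decouples the two ends and makes it always possible. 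I would organize this as: (i) fix notation for the three parts; (ii) record that all $8$ transversals are edges of $H\cap H'$ and all $12$ cross-pairs are edges of $H'(x)$ for every $x\in S_K$; (iii) for each of the (few) patterns of where the promised $G_{1/3}$-pairs sit, write down an explicit length-$2$ or length-$3$ tight path on $4$ or $5$ of the vertices meeting all three requirements; (iv) conclude. The whole argument is a bounded case check with no asymptotics, so beyond the structural reinterpretation in the first paragraph it is routine.
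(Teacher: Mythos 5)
Your proposal is correct and follows essentially the same route as the paper: all transversal triples of $K$ are edges of $H'\subseteq H$ and all cross-pairs lie in $H'(x)$ for every $x\in S_K$, so the only remaining task is to place a guaranteed $G_{1/3}$-pair at each end of a short tight path threading the three parts, which is the finite case check you outline. The paper performs that check by taking the two disjoint transversal edges $\{u_1,v_1,w_1\}$ and $\{u_2,v_2,w_2\}$, each containing a $G_{1/3}$-pair, and after relabelling only two cases remain, handled by the order-$5$ path $u_1v_1w_1u_2v_2$ and the order-$4$ path $u_1v_1w_2u_2$ --- exactly the relabelling-plus-fifth-vertex manoeuvre you describe.
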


\begin{proof} Let the three partition classes of $K$ are $\{u_1,u_2\}$, $\{v_1,v_2\}$, and $\{w_1,w_2\}$. We have 
to find a path of order 4 or 5 in $K$  whose endpairs are in $G_{1/3}$. Consider the two disjoint hyperedges
$\{u_1,v_1,w_1\}$ and $\{u_2,v_2,w_2\}$ of $K$. By definition of $H'$ each of these two hyperedges must contain at least one pair from $G_{1/3}$. By symmetry, it suffices to consider the following two cases.

\noindent{\bf Case 1} ($\{u_1,v_1\}\in G_{1/3}$ and $\{u_2,v_2\}\in G_{1/3}$){\bf.} Then $u_2v_1w_1u_2v_2$ is an $x$-absorber of order 5 for all $x\in S_K$.

\noindent{\bf Case 2} ($\{u_1,v_1\}\in G_{1/3}$ and $\{u_2,w_2\}\in G_{1/3}$){\bf.} Then $u_1v_1w_2u_2$ is an $x$-absorber of order 4 for all $x\in S_K$.
\end{proof}

With these preparations we can prove the Absorbing Lemma.

\begin{proof}[Proof of Lemma~\ref{abs59}] 
Set
$$\gamma\le\gamma_0=\min\left\{\frac{c}{61},0.001\right\},$$
where $c$ is given by Claim \ref{tx}.
Let $\gamma\le \gamma_0$ and $n$ be sufficiently large.
Select randomly  a family ${\mathcal F}'$ of
copies  of $K_{2,2,2}$ in $H'$, independently and with probability $p=\tfrac1{30}\gamma n^{-5}$.
By Markov's inequality, with probability at least $1-0.5-0.4=0.1$,   ${\mathcal F}'$ satisfies:
\begin{itemize}
\item $|{\mathcal F}'|\le 2 n^6p=\frac{\gamma}{15} n$,
\item there are at most $\tfrac52\times n^6\times 6\times n^5p^2= 15n^{11}p^2$ pairs of vertex-intersecting copies of $K_{2,2,2}$ in ${\mathcal F}'$,
\end{itemize}
and, for large $n$, by Chernoff's inequality and using  Claim \ref{tx}, with probability greater than $0.9$,
\begin{itemize}
\item for every vertex $x$, there are at least
$\frac12 c n^6p$ $x$-friendly
 copies of   $K_{2,2,2}$  in  ${\mathcal F}'$.
\end{itemize}
Hence, there exists a family ${\mathcal F}'$ of copies of $K_{2,2,2}$ in $H'$ satisfying all three above conditions.
By removing from ${\mathcal F}'$  one
copy  of each intersecting pair, we obtain a subfamily $\mathcal F$ such that

\begin{itemize}
\item $|{\mathcal F}|\le\frac{\gamma}{15}n$,
\item  $\mathcal F$ consists of  disjoint copies of   $K_{2,2,2}$ in $H'$,
\item for every vertex $x$, there are at least
$$\tfrac12cn^6p- 15n^{11}p^2\ge \tfrac c{60}\gamma n-\tfrac{15}{900}\gamma^2 n\ge\gamma^2n$$
 $x$-friendly   copies of  $K_{2,2,2}$ in $\mathcal F$,
\end{itemize}
where for the last inequality we used the bound $\gamma\le c/61$.

Recall that by Claim \ref{456} each copy $K$ of $K_{2,2,2}$ in $H'$ contains a path of order 4 or 5 which is an $x$-absorber for all $x\in S_K$.
We now select one absorber  from each copy of $K_{2,2,2}$ in $\mathcal F$. Let us denote the resulting family of paths by $\mathcal P$ and note that $|\mathcal P|=|\mathcal F|\le \gamma n/15$.
Using Lemma \ref{conn}, we will connect all paths in $\mathcal P$ into one path $A$ of order at most
$$(5+10)|\mathcal P|-10\le 15\times \frac{\gamma}{15} n\le \gamma n.$$
(There are at most 5 vertices on a path in $\mathcal P$ and the number of new vertices connecting this path with another one is, by Lemma \ref{conn}, 14-4=10, since 4 of the 14 vertices belong to the paths in $\mathcal P$.)

Let $P_1,\dots,P_t$, $t\le\frac{\gamma}{15}n$, be the  paths (of order
4 or 5) in $\mathcal F$. Assume that, for some $i=1,\dots,t-1$, we have already connected
$P_1,\dots,P_i$ into a path $A_i$ of order at most $15i$. Let $H_i$ be the subhypergraph of $H$ obtained
by removing from $H$ all vertices of $A_i$ along with all vertices of $P_{i+1}\cup\dots\cup P_t$,
except for one endpair $e_{i+1}=w_{i+1}w'_{i+1}$ of $P_{i+1}$ and the endpair $e_i=w_iw'_i$ of $P_i$ (which is also an
endpair of $A_i$). Since $\gamma\le\gamma_0\le0.001$, assuming $n_{\ref{abs59}}\ge n_{\ref{conn}}/(1-\gamma_0)$,
$$|V(H_i)|\ge n-15t\ge (1-\gamma_0)n\ge n_{\ref{conn}},$$
$$\delta(H_i)\ge \delta(H)-15tn\ge\delta(H)-\gamma n^2\ge.799\binom{n-1}2\ge.799\binom{|V(H_i)|-1}2,$$
 and
$$\min\{\deg_{H_i}(w_i,w'_i),\deg_{H_i}(w_{i+1},w'_{i+1})\}\ge\frac13n-15tn\ge.33n\ge.33|V(H_i)|.$$ Hence, the assumptions of Lemma \ref{conn} are satisfied, and so there is a path
$Q_i$ of length 12 connecting $e_i$ and $e_{i+1}$ in $H_i$. The concatenation of the paths $A_i$,
$Q_i$, and $P_{i+1}$ constitutes the path $A_{i+1}$. Finally, set $A=A_t$.

To see that $A$ is indeed a $\gamma^2n$-absorbing path  in $H$, consider an arbitrary subset $U\subseteq
V\setminus V(A)$ of size $|U|\le\gamma^2n$. Since for every $x\in U$ there are at least $\gamma^2n$
$x$-absorbers $P_i$ in $A$, there is a one-to-one mapping $f\colon U\to\{1,\dots,t\}$ such
that for every $x\in U$, $P_{f(x)}$ is an $x$-absorber. Let $(v_1^x,\dots,v_i^x)$ be the vertices
of the path $P_{f(x)}$ ($4\le i\le 5$). Then
the path obtained from $A$ by replacing, for each $x\in U$, the edges $\{v_1^x,v_2^x,v_3^x\}$ and
$\{v_2^x,v_3^x,v_4^x\}$ with  $\{v_1^x,v_2^x,x\}$, $\{v_2^x,x,v_3^x\}$, and $\{x,v_3^x,v_4^x\}$, is
the desired path $A_U$. 
\end{proof}

\subsection{The Reservoir Lemma}

The next preparatory step toward the proof of Theorem~\ref{main} is to put aside  a reservoir
set $R$ which should be small,  quickly reachable from any
 pair in~$G_{1/3}$, and, moreover, the induced subhypergraph $H[R]$ should satisfy the assumption of Lemma~\ref{conn} with some margin. We state this lemma in a general form.

\begin{lemma}\label{res}  Let $U_1,\dots,U_s$ be subsets of an $n$-element set $V$ and let $L_1,\dots, L_g$ be graphs on $V$, where $s$ and $g$ are both polynomials in $n$ and such that for constants $\alpha_i$, $\beta_j\in(0,1)$ for $i=1,\dots,s$ and $j=1,\dots,g$
we have $|U_i|\ge \alpha_i n$  and $|L_j|\ge \beta_j \binom{n}2$, $j=1,\dots,g$. 

Then for every constant $p$, $0<p<1$ there is $n_{\ref{res}}=n_{\ref{res}}(p)$ such that if $n\ge n_{\ref{res}}$ then there exists a subset $R\subset V$ satisfying
\begin{enumerate}[label=\alabel]
\item\label{ra} $\big||R|-pn\big|\le pn^{2/3}$,

\item\label{rb} for all $i=1,\dots,s$, we have $|U_i\cap R|\ge (\alpha_i-2n^{-1/3})|R|$, and

\item\label{rc} for all $i=1,\dots,g$, we have $|L_j[R]|\ge (\beta_j-3n^{-1/3})\binom{|R|}2$.
\end{enumerate}
\end{lemma}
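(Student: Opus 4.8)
The plan is to select $R$ by a straightforward random sampling argument: include each vertex of $V$ in $R$ independently with probability $p$, and then show that with positive probability all three conditions hold simultaneously. The high-level strategy is that each of the quantities $|R|$, $|U_i \cap R|$, and $|L_j[R]|$ is sharply concentrated around its expectation, and the error terms $pn^{2/3}$, $2n^{-1/3}|R|$, and $3n^{-1/3}\binom{|R|}{2}$ are generous enough (they dwarf the natural $\sqrt{n}$-scale fluctuations) that each bad event has probability far smaller than $1/(s+g+1)$, which is still only polynomially small; a union bound then finishes the argument. The only mild subtlety is that $|L_j[R]|$ is not a sum of independent indicators, so I would invoke a bounded-differences (Azuma--Hoeffding / McDiarmid) inequality rather than Chernoff for that part.

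In more detail, first I would set $R$ to be the random subset obtained by independent inclusion with probability $p$. For \ref{ra}: $|R| = \sum_{v\in V}\mathbf 1[v\in R]$ has mean $pn$, and by Chernoff's inequality $\Pr\big[\,\big||R|-pn\big| > pn^{2/3}\,\big] \le 2\exp(-\Omega(n^{1/3}))$, which is superpolynomially small. For \ref{rb}: fix $i$; $|U_i\cap R|$ has mean $p|U_i| \ge \alpha_i pn$, so again by Chernoff $|U_i\cap R| \ge (\alpha_i - n^{-1/3})pn$ except with probability $2\exp(-\Omega(n^{1/3}))$; combining with the lower bound on $|R|$ from \ref{ra} (note $|R|\le pn + pn^{2/3} \le pn(1+n^{-1/3})$) gives $|U_i\cap R| \ge (\alpha_i - n^{-1/3})pn \ge (\alpha_i - 2n^{-1/3})|R|$ for large $n$, where I absorb the discrepancy between $pn$ and $|R|$ into the extra $n^{-1/3}$ slack. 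For \ref{rc}: fix $j$; the random variable $|L_j[R]|$ counts edges of $L_j$ with both endpoints in $R$, so $\E|L_j[R]| = p^2|L_j| \ge \beta_j p^2\binom n2$. Changing the status of a single vertex $v$ changes $|L_j[R]|$ by at most $\deg_{L_j}(v) \le n$, so by McDiarmid's bounded-differences inequality $\Pr\big[\,|L_j[R]| < p^2|L_j| - n^{5/3}\,\big] \le \exp(-\Omega(n^{1/3}))$; since $p^2\binom n2 - n^{5/3}$ still comfortably exceeds $(\beta_j - 3n^{-1/3})\binom{|R|}{2}$ once we also use $\binom{|R|}2 \le \binom{pn(1+n^{-1/3})}{2} \le p^2\binom n2(1 + 3n^{-1/3})$, the claimed bound follows.

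Finally, since $s$ and $g$ are polynomial in $n$, the union bound over the at most $s+g+1$ failure events costs only a polynomial factor, and each individual failure probability is $\exp(-\Omega(n^{1/3}))$, hence the total failure probability is $o(1) < 1$ for $n \ge n_{\ref{res}}(p)$. Therefore a set $R$ with all three properties exists, completing the proof.

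I expect the main (very minor) obstacle to be purely bookkeeping: tracking how the $\pm pn^{2/3}$ wobble in $|R|$ propagates through the ratios in \ref{rb} and \ref{rc}, i.e.\ checking that replacing $pn$ by $|R|$ and $\binom n2$ by $\binom{|R|}2$ in the denominators only eats into the slack terms $2n^{-1/3}$ and $3n^{-1/3}$ and never overruns them. This is routine: one just needs the elementary estimates $(1 - n^{-1/3})^{-1} \le 1 + 2n^{-1/3}$ and $(1+n^{-1/3})^2 \le 1 + 3n^{-1/3}$ valid for large $n$, together with the fact that $n^{5/3} = o\big(n^{-1/3}\binom n2\big)$, so no genuine difficulty arises; the concentration inequalities do all the real work.
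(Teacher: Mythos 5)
Your overall strategy is the same as the paper's (a binomial random subset, concentration for each of the three statistics, and a union bound over the polynomially many events); for part~\ref{rc} the paper uses Janson's inequality where you use McDiarmid, which is a legitimate substitute since the Lipschitz constants are $\deg_{L_j}(v)\le n$ and hence $\sum_v c_v^2\le n^3$. However, your bookkeeping in part~\ref{rc} contains a genuine error. The deviation you allow, $t=n^{5/3}$, is an \emph{absolute} error term carrying no factor of $p$, whereas the room you actually have is only a \emph{relative} $n^{-1/3}$-fraction of the expectation $p^2|L_j|$. Concretely, McDiarmid guarantees only $|L_j[R]|\ge p^2|L_j|-n^{5/3}$, and in the worst case $|L_j|=\beta_j\binom n2$ this reads $\beta_j p^2\binom n2-n^{5/3}$ (your comparison uses $p^2\binom n2-n^{5/3}$, silently dropping the $\beta_j$). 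Since $\binom{|R|}2\le p^2\binom n2(1+n^{-1/3})^2(1+O(1/n))$, the target $(\beta_j-3n^{-1/3})\binom{|R|}2$ is only about $\bigl(\tfrac{(3-2\beta_j)p^2}{2}\bigr)n^{5/3}$ below $\beta_j p^2\binom n2$, so your inequality requires $(3-2\beta_j)p^2/2>1$, i.e.\ $p^2>2/(3-2\beta_j)\ge 2/3$. This fails for every $p\le 0.8$, and in the intended application $p=\gamma^2/3$ is tiny. Relatedly, your closing assertion that $n^{5/3}=o\bigl(n^{-1/3}\binom n2\bigr)$ is false: $n^{-1/3}\binom n2=\tfrac12 n^{5/3}(1-1/n)$ is of the same order (indeed smaller by a factor of~$2$), and the missing $p^2$ makes the comparison worse still.

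The repair is easy and brings you exactly in line with the paper: take the deviation $t=n^{-1/3}p^2\binom n2$ (a relative $n^{-1/3}$-slack in the expectation, which is what the paper feeds into Janson's inequality). Then McDiarmid still yields a failure probability $\exp\bigl(-\Omega_p(n^{1/3})\bigr)$ because $t^2/\sum_v c_v^2=\Theta(p^4 n^{1/3})$ and $p$ is a constant, and the remaining comparison with $(\beta_j-3n^{-1/3})\binom{|R|}2$ goes through since $\beta_j<1$ leaves slack $(3-2\beta_j)n^{-1/3}p^2\binom n2 > n^{-1/3}p^2\binom n2$ after accounting for $|R|\le pn(1+n^{-1/3})$. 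Parts~\ref{ra} and~\ref{rb} of your argument are fine and match the paper (which uses Chebyshev for~\ref{ra} and Chernoff for~\ref{rb}).
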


\begin{proof} Select a binomial random subset $R$ of  $V$ by
including to $R$ every element of $V$, independently, with probability $p$. The random
variable $|R|$ has the binomial distribution with expectation $np$.
By Chebyshev's inequality,  with probability tending to 1 as $n\to\infty$, part~\ref{ra} holds.

For every $i$, the random variable $X_i=|U_i\cap R|$ is also binomially
distributed, with expectation
 $$\EE[X_i]=|U_i|p\ge\alpha_i np.$$
 Thus, by a standard
application of Chernoff's bound (see \cite{JLR}*{inequality (2.6)}) we have
$$\PP\left(\exists i\colon X_i\le np(\alpha_i-n^{-1/3})\right)
\le \PP\left(\exists i\colon X_i\le |U_i|p-pn^{2/3}\right)
\le s\cdot\exp(-(p/2)n^{1/3})=o(1),$$
where in the last step we used, in passing, the trivial bound $|U_i|\le n$.
Hence, using the estimate in~\ref{ra}, for large $n$, we have
\begin{align*}\PP\left(\exists i\colon X_i\!\le\!(\alpha_i-2n^{-1/3})|R|\right)&=o(1)+\PP\left(\exists i\colon X_i\le(\alpha_i-2n^{-1/3})|R|,\, |R|\le np(1+n^{-1/3})\right)\\&
\le o(1)+\PP\left(\exists i\colon X_i\le(\alpha_i-2n^{-1/3}) np(1+n^{-1/3})\right)\\&\le o(1)+\PP\left(\exists i\colon X_i\le np(\alpha_i-n^{-1/3})\right)\\&=o(1).\end{align*}
Consequently, the randomly chosen set $R$ satisfies
 condition~\ref{rb} with probability tending to 1 as~$n\to\infty$.

 For part~\ref{rc}, fix  $i$ and consider a random variable $Y_i=|L_i[R]|$ counting the number of
 edges $\{u,w\}\in L_i$ with $\{u,w\}\subseteq
 R$. Note that
$$\EE[Y_i]=|L_i|p^2\ge\beta_i\binom {n}2p^2.$$
We apply to $Y_i$ Janson's inequality (see, e.g., \cite{JLR}*{Theorem~2.14}), which states that
$$\PP\left(Y_i\le \EE[Y_i]-t\right)\le\exp\{-t^2/\overline\Delta\}.$$
Here $\overline\Delta=\sum\sum \EE[I_eI_f]$, where the summation runs over all ordered pairs of not necessarily distinct edges of $L_i$ which share at least one vertex, while $I_e=1$ when $e\subset R$ and $I_e=0$ otherwise.
Observe that, up to the order of magnitude,  $\overline\Delta$ is equal to the expected number of pairs of edges of $L_i$, sharing a vertex, whose all three vertices are included in  $R$.
 Thus, $\overline\Delta=\Theta(n^3)$, and, consequently, with $t=n^{-1/3}\binom {n}2p^2$, we have
$$\PP\left(\exists i \colon Y_i\le(\beta_i-n^{-1/3})\binom {n}2p^2\right)\le \PP\left(\exists i \colon Y_i\le(\EE[Y_i]-t\right)  \le g \exp\{-\Theta(n^{1/3})\}=o(1).$$
Using part~\ref{ra} again,  for large $n$, we obtain
\begin{align*}
&\PP\left(\exists i\colon Y_i\!\le\!(\beta_i-3n^{-1/3})\binom X2\right)\\
&\hspace{2.5cm}=o(1)+\PP\left(\exists i\colon Y_i\le(\beta_i-3n^{-1/3})\binom X2,\, X\le np(1+n^{-1/3})\right)\\
&\hspace{2.5cm}\le o(1)+\PP\left(\exists i\colon Y_i\le(\beta_i-3n^{-1/3})\binom n2 p^2(1+n^{-1/3})^2\right)\\
&\hspace{2.5cm}\le o(1)+\PP\left(\exists i\colon Y_i\le(\beta_i-n^{-1/3})\binom n2p^2\right)\\
&\hspace{2.5cm}=o(1),\end{align*}
which means that the random set $R$  satisfies the condition of part ~\ref{rc} with probability tending to 1 as $n\to\infty$. In summary, for sufficiently large $n$, the probability that at least one of conditions~\ref{ra},~\ref{rb}, or~\ref{rc}
fails is less than 1, and thus, there exists
a set $R\subset V$  satisfying  all three properties~\ref{ra},~\ref{rb}, and~\ref{rc}.
\end{proof}

\subsection{The  Cover Lemma}\label{PCL}

Recall that $K_{L,L,L}$ denotes the complete 3-partite 3-graph on vertex classes of size $L$.

\begin{lemma}[Cover Lemma]\label{clic}
For every $\rho>0$, $\lambda>0$, and an integer $L$, there exists an integer $n_{\ref{clic}}$  such that every 3-graph $H$ with
$n\ge n_{\ref{clic}}$ vertices and $\delta(H)\ge\left(\tfrac59+\lambda\right)\binom{n-1}2$, contains
 a family of vertex-disjoint copies of $K_{L,L,L}$, which together cover at least
$(1-\rho)n$ vertices of $H$.
\end{lemma}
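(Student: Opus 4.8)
The plan is to run the \emph{regularity method}: apply the Weak Regularity Lemma (Lemma~\ref{WRL}) to $H$, pass to a reduced $3$-graph $\cR$ on the cluster set which inherits essentially the minimum vertex degree of $H$, find a near-perfect matching of $\cR$ via the known vertex-degree threshold for perfect matchings in $3$-graphs, and finally tile the triple of clusters spanned by each edge of that matching by vertex-disjoint copies of $K_{L,L,L}$ using Erd\H os' supersaturation bound (Lemma~\ref{erdos}). Concretely, given $\rho$, $\lambda$ and $L$, first fix $\eps>0$ much smaller than $\rho^2$, $\lambda$ and a density threshold $d:=\lambda/10$; then fix a large integer $t_0$ (large in terms of $\lambda$ only), let $T_0$ and $n_{\ref{WRL}}$ be the outputs of Lemma~\ref{WRL}, and finally let $n_{\ref{clic}}$ be large in terms of $T_0,\eps,d,L$. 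For $n\ge n_{\ref{clic}}\ge n_{\ref{WRL}}$ take an $\eps$-regular partition $V(H)=V_1\dcup\dots\dcup V_t$ with $t_0\le t\le T_0$, so $m:=|V_1|=n/t+O(1)$. Let $\cR$ be the $3$-graph on $[t]$ in which $\{i,j,k\}$ is an edge (a \emph{useful} triple) iff $H[V_i,V_j,V_k]$ is $\eps$-regular of density at least~$d$.

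The key point is that $\cR$ inherits a minimum vertex degree just above $\tfrac59\binom t2$ once a negligible set of clusters is discarded. Fix $i$. On one hand $\sum_{v\in V_i}\deg_H(v)\ge m(\tfrac59+\lambda)\binom{n-1}2=(\tfrac59+\lambda)\tfrac{n^3}{2t}(1+o(1))$. On the other hand, up to an error $O(n^3/t^2)$ coming from edges with two vertices in one cluster, this sum equals $\sum_{\{j,k\}}e_H(V_i,V_j,V_k)$ over pairs $\{j,k\}$ with $i,j,k$ distinct; a useful triple contributes at most $m^3$, a regular non-useful triple less than $dm^3$, and an irregular triple at most $m^3$. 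Since Lemma~\ref{WRL} leaves at most $\eps\binom t3$ irregular triples in total, at most $\sqrt\eps\, t^2$ of them pass through $V_i$ for all but at most $\sqrt\eps\, t$ clusters $V_i$; for each of these \emph{good} clusters, dividing through by $m^3=\tfrac{n^3}{t^3}(1+o(1))$ gives $\deg_{\cR}(V_i)\ge(\tfrac59+\lambda-d-\sqrt\eps-o(1))\binom t2\ge(\tfrac59+\tfrac\lambda2)\binom{t'-1}2$, where $t'$ is the number of good clusters, provided $t_0$ (hence $t'$) is large.

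Now discard the $o(t)$ bad clusters and at most two further clusters so that the remaining number $t''$ of clusters is divisible by $3$; the induced sub-$3$-graph of $\cR$ still has minimum vertex degree at least $(\tfrac59+\tfrac\lambda3)\binom{t''-1}2$, which for $t_0$ large exceeds the vertex-degree threshold for a perfect matching in a $3$-graph \cites{hps,kot,khan}, so it has a perfect matching $M$. For each $\{a,b,c\}\in M$ the tripartite $3$-graph $H[V_a,V_b,V_c]$ is $\eps$-regular of density at least $d$; hence as long as sets $A\subseteq V_a$, $B\subseteq V_b$, $C\subseteq V_c$ of still-uncovered vertices all have size at least $\eps m$, the density of $H[A,B,C]$ is at least $d-\eps>d/2$, so $H[A,B,C]$ has at least $\tfrac d2(\eps m)^3\ge\tfrac{d\eps^3}{54}\bigl(|A|+|B|+|C|\bigr)^3$ edges, and by Lemma~\ref{erdos} with $h=L$ it contains a copy of $K_{L,L,L}$, which being tripartite meets each of $A,B,C$ in exactly $L$ vertices. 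Removing such a copy and iterating leaves fewer than $3(\eps m+L)$ vertices of $V_a\cup V_b\cup V_c$ uncovered. Summing over the $t''/3$ edges of $M$ and adding the at most $\sqrt\eps\, n+o(n)$ vertices of the discarded clusters, the number of uncovered vertices is at most $\eps n+t''L+\sqrt\eps\, n+o(n)\le 2\sqrt\eps\, n+T_0L+o(n)<\rho n$ for $\eps$ small and $n$ large, as required.

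The main obstacle is the degree transfer to $\cR$: one must carefully throw away the few clusters lying in too many irregular triples and then match the resulting minimum degree against the \emph{sharp} vertex-degree threshold $\bigl(\tfrac59+o(1)\bigr)\binom{t-1}2$ for perfect matchings in $3$-graphs --- a nontrivial external input --- which is precisely why the additive slack $\lambda$ in the hypothesis cannot be removed. Everything else (the greedy $K_{L,L,L}$-packing inside a dense regular triple, and bookkeeping the uncovered vertices) is routine regularity-plus-supersaturation.
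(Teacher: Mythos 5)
Your proposal is correct and follows essentially the same route as the paper: weak hypergraph regularity, a reduced hypergraph of dense $\eps$-regular triples whose minimum vertex degree inherits $(\tfrac59+\Omega(\lambda))\binom{t-1}2$ after discarding the few clusters in many irregular triples, a (near-)perfect matching via the $\tfrac59$-type vertex-degree threshold of~\cite{hps}, and Erd\H{o}s' supersaturation (Lemma~\ref{erdos}) to tile each matched cluster-triple with vertex-disjoint copies of $K_{L,L,L}$. The only differences are cosmetic (constants such as $d=\lambda/10$ versus $\lambda/12$, and merging the irregular-triple cleanup into the degree-transfer step rather than into the matching step).
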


In the proof  of Lemma \ref{clic} we will need the following result from~\cite{hps}.
\begin{theorem}\label{sch}
For every $\beta>0$ there exists $t_1$ such that every $3$-graph $H$ with  $t\ge t_1$ vertices,
$3|t$, and with $\delta(H)\ge(\tfrac59+\beta)\binom{t-1}2$ contains a perfect matching.
\end{theorem}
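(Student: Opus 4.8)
The plan is to run the \emph{absorbing method}, exactly in the spirit of the proof of Lemma~\ref{abs59}. The aim is to produce two matchings in $H$: a short \emph{absorbing matching} $M_0$ and a \emph{near-perfect matching} $M_1$ of the rest, and then glue them. Concretely, $M_0$ should have $|V(M_0)|\le\mu t$ for a small constant $\mu=\mu(\beta)>0$ and have the property that for every $U\subseteq V(H)\setminus V(M_0)$ with $|U|\le\mu t$ and $3\mid|U|$ the hypergraph $H[V(M_0)\cup U]$ has a perfect matching; and $M_1$ should be a matching of $H'=H-V(M_0)$ covering all but at most $\mu t$ vertices. Since $3\mid t$ and $|V(M_0)|,|V(M_1)|$ are automatically divisible by $3$, the leftover set $U=V(H')\setminus V(M_1)$ satisfies $3\mid|U|$ and $|U|\le\mu t$, so absorbing $U$ into $M_0$ and adding $M_1$ yields a perfect matching of $H$.

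To build the absorbing matching I would first prove a supersaturation statement: there is a constant $a$ (a bounded multiple of $3$) and $c=c(\beta)>0$ such that \emph{every} triple $T\subseteq V(H)$ has at least $c\,t^{a}$ \emph{absorbers}, where an absorber for $T$ is an $a$-element set $A$ disjoint from $T$ with both $H[A]$ and $H[A\cup T]$ admitting perfect matchings. The delicate point is that we have no control over $\delta_2(H)$, so the gadget must never require a prescribed pair to have a common neighbour; instead each $x\in T$ is placed into the absorbing partition through an edge $\{x,u,v\}$ in which only $u$ is pinned down in advance and $v$ is chosen from $N_H(x,u)$. This is feasible because $|H(x)|>\tfrac12\binom{t-1}{2}$ forces at least $(\tfrac59-o(1))t$ vertices $u$ to satisfy $\deg_H(x,u)\ge\gamma t$; and one distributes $x,y,z$ across the gadget so that no single vertex $a_i$ of $A$ is required to lie in the intersection of all three such neighbour sets (which could be empty, since $3\cdot\tfrac59<2$). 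Granting the bound $c\,t^a$, I would select a random subfamily of absorbers, including each independently with probability $p\asymp t^{1-a}$; Markov's inequality and Chernoff's bound (precisely as in the proof of Lemma~\ref{abs59}) give a family that is almost disjoint, covers at most $\mu t$ vertices, and still contains $\gg\mu t$ absorbers for every triple. Deleting one set from each intersecting pair makes the family disjoint; letting $M_0$ be the union of fixed perfect matchings of the graphs $H[A]$ over the family produces the absorbing matching. For the absorbing property, given $U$ with $3\mid|U|$ one partitions $U$ into triples and, by Hall's theorem together with ``many absorbers per triple'', assigns to each triple a private absorber and swaps its $H[A]$-matching for an $H[A\cup T]$-matching.

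For the near-perfect matching I would work with $H'=H-V(M_0)$, which has $3\mid|V(H')|$ and (for $\mu$ small) $\delta(H')\ge(\tfrac59+\tfrac\beta2)\binom{|V(H')|-1}{2}$. Apply the Weak Regularity Lemma (Lemma~\ref{WRL}) to $H'$ to obtain clusters $W_1,\dots,W_m$, and form the reduced $3$-graph $\mathcal R$ on $[m]$ whose edges are the triples $\{i,j,k\}$ for which $(W_i,W_j,W_k)$ is $\varepsilon$-regular of density at least $d$. An averaging argument over typical vertices of each cluster transfers the degree bound: $\delta(\mathcal R)\ge(\tfrac59+\tfrac\beta4)\binom{m-1}{2}$ for suitable $\varepsilon,d$. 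From this one extracts a matching in $\mathcal R$ leaving at most $\varepsilon m$ clusters uncovered; this is where the constant $\tfrac59$ enters, via a fractional-matching argument, and it is essentially optimal by the space-barrier construction (a set $A$ of $\approx t/3$ vertices meeting every edge). Finally, each matched cluster-triple is a dense $\varepsilon$-regular $3$-partite $3$-graph on parts of nearly equal size $\ell\approx|V(H')|/m$, hence contains a matching covering all but $o(\ell)$ of its vertices (a standard consequence of $\varepsilon$-regularity, or by tiling it with copies of $K_{L,L,L}$ using Lemma~\ref{erdos}); the union over all matched cluster-triples is the desired $M_1$, covering all but at most $2\varepsilon t\le\mu t$ vertices of $H'$.

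The main obstacle is the supersaturation bound for absorbers in the absence of any co-degree control: the usual absorber gadgets route each vertex of the target triple through a controlled common neighbourhood, which is unavailable here since $\delta_2(H)$ may be tiny or zero. One is forced to use ``link-only'' gadgets and must argue that \emph{every} triple --- including those whose three vertices sit in an adversarially arranged region of $H$ --- still has polynomially many of them, and making this robust is the crux of the argument. The near-perfect matching inside the reduced hypergraph is the secondary technical point, and is where the numerical value $\tfrac59$ is actually used.
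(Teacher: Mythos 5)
You should first be aware that the paper does not prove this statement at all: Theorem~\ref{sch} is quoted from the reference \cite{hps} (H\`an--Person--Schacht) and used as a black box in the proof of the Cover Lemma (Lemma~\ref{clic}); Remark~\ref{rem} even points out that for the purposes of this paper a much weaker version, with $.7$ in place of $\tfrac59$, would suffice and would make Theorem~\ref{sch} unnecessary. So there is no in-paper proof to compare against; the relevant comparison is with \cite{hps}, whose argument is indeed the absorbing method you outline (absorbing matching plus almost-perfect matching), so your architecture is the right one.

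As a proof, however, your proposal has genuine gaps precisely at the two places you yourself flag as the crux, and flagging them is not the same as closing them. (1) The absorber supersaturation: you never exhibit a gadget. The sentence ``one distributes $x,y,z$ across the gadget so that no single vertex is required to lie in the intersection of all three neighbour sets'' is a design constraint, not a construction; to claim $c\,t^a$ absorbers for \emph{every} triple you must write down an explicit $a$-vertex configuration, verify that both $H[A]$ and $H[A\cup T]$ have perfect matchings, and count the choices using only the vertex-degree hypothesis (in \cite{hps} this is done explicitly, and in fact only $\delta_1\ge(\tfrac12+\gamma)\binom{t-1}2$ is needed for this step, which your hypothesis comfortably supplies). (2) The almost-perfect matching is where $\tfrac59$ actually does the work, and your treatment of it is circular in effect: after weak regularity you reduce to the assertion that a reduced $3$-graph with $\delta_1\ge(\tfrac59+\tfrac\beta4)\binom{m-1}2$ has a matching missing at most $\varepsilon m$ clusters, ``via a fractional-matching argument'' --- but that assertion is essentially the almost-perfect-matching lemma of \cite{hps} itself (equivalently, the $d=1$, $k=3$ fractional threshold $\tfrac59$, a nontrivial theorem in its own right), and you give no argument for it. Unless you either prove that cluster-level statement directly (e.g.\ by the shifting/augmentation count of \cite{hps}, where a maximum matching leaving $\ge\varepsilon t$ vertices uncovered is improved by analysing the links of three uncovered vertices across pairs of matching edges) or cite the fractional result as an external input, the proposal reduces the theorem to an unproved statement of the same depth. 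The remaining ingredients (the degree transfer to the reduced hypergraph, analogous to Claim~\ref{69}; the random selection and disjointification of absorbers, as in Lemma~\ref{abs59}; the divisibility bookkeeping) are standard and fine.
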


The proof of Lemma \ref{clic} consists of several short steps.
 We begin by applying the Weak Regularity Lemma (Lemma \ref{WRL}) to~$H$.
Let
\begin{equation}\label{epst0}\eps=\min\left\{\frac14\rho^2,\frac1{400}\lambda^2\right\},\qquad t_0\ge\max\left\{\frac{13}{\lambda},2t_1\right\}\qquad n_{\ref{clic}}\ge\max\left\{n_{\ref{WRL}}, T_0n_{\ref{erdos}}\right\},
\end{equation}
where $T_0$ is given by Lemma \ref{WRL}.
 We apply Lemma \ref{WRL} to $H$, obtaining an $\eps$-regular partition $(V_1,\dots,V_t)$, where $t_0\le t\le T_0$.
Let us call the sets $V_i$  \emph{clusters} and below we consider the {\it
  cluster $3$-graph}~$K=K(\lambda/12,\eps)$ on the vertex set $[t]=\{1,\dots,t\}$. First, we define two auxiliary 3-graphs on~$[t]$:
 \begin{itemize}
\item $D(\lambda/12)$
consisting of all triples $\{i_1,i_2,i_3\}\subset[t]$ such that $d_H(V_{i_1},V_{i_2},V_{i_3})\ge\lambda/12$, and
\item $R(\varepsilon)$
 consisting of all triples $\{i_1,i_2,i_3\}\subset[t]$ such that $H[V_{i_1},V_{i_2},V_{i_3}]$ is
  $\varepsilon$-regular.
\end{itemize}

Having defined $D$ and $R$ we define $K=K(\lambda/12,\eps)$ as the intersection
\begin{equation}\label{cup}
K=D(\lambda/12)\cap R(\varepsilon).
\end{equation}
So, the  edges of $K$ are all triples of indices $\{i_1,i_2,i_3\}$
 such that
 $d_H(V_{i_1},V_{i_2},V_{i_3})\ge\lambda/12$ and $H[V_{i_1},V_{i_2},V_{i_3}]$ is $\varepsilon$-regular.

\begin{claim}\label{69}
$$\delta(D)\ge\left(\frac59+\frac23\lambda\right)\frac{t^2}2.$$
\end{claim}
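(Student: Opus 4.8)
The plan is to fix an arbitrary index $i\in[t]$, say realising $\deg_D(i)=\delta(D)$, and to count, for the cluster $V_i$, the number of edges of $H$ having exactly one vertex in $V_i$ and one vertex in each of two other clusters. On one hand, summing the vertex degrees over $v\in V_i$ and discarding the (negligibly many) edges with two vertices in a single cluster, this count is at least roughly $|V_i|\cdot\delta(H)\ge |V_i|\cdot(5/9+\lambda)\binom{n-1}{2}$, which in terms of the cluster sizes $|V_j|\approx n/t$ is about $(5/9+\lambda)\tfrac{n^3}{2t}$. On the other hand, a pair of clusters $\{j,j'\}$ (with $j,j'\ne i$) contributes to this count at most $|V_i||V_j||V_{j'}|\approx n^3/t^3$ edges when $\{i,j,j'\}\in D$, and at most $(\lambda/12)|V_i||V_j||V_{j'}|$ edges when $\{i,j,j'\}\notin D$ (by the very definition of $D$ as the set of triples of density $\ge\lambda/12$). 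Writing $d=\deg_D(i)$ for the number of pairs of the first type, we get an upper bound of roughly $\left(d+\tfrac{\lambda}{12}\binom{t-1}{2}\right)\cdot\tfrac{n^3}{t^3}$.

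Combining the two bounds yields $d\ge(5/9+\lambda)\tfrac{t^2}{2}-\tfrac{\lambda}{12}\binom{t-1}{2}+O(t)$, and since $\tfrac{\lambda}{12}\binom{t-1}{2}\le\tfrac{\lambda}{24}t^2$, the main term is at least $(5/9+\lambda-\lambda/12)\tfrac{t^2}{2}=(5/9+\tfrac{11}{12}\lambda)\tfrac{t^2}{2}$, comfortably above $(5/9+\tfrac23\lambda)\tfrac{t^2}{2}$ once $t\ge t_0$ is large enough (recall $t_0\ge 13/\lambda$, which is exactly what is needed to swallow the linear-in-$t$ error terms into the slack between $\tfrac{11}{12}\lambda$ and $\tfrac23\lambda$).

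The bookkeeping obstacle — really the only place one must be a little careful — is handling the error terms introduced by the near-equal but not exactly equal cluster sizes ($|V_1|\le\cdots\le|V_t|\le|V_1|+1$, so each $|V_j|=n/t+O(1)$) and by the edges of $H$ that have two or three vertices inside a single cluster (of which there are at most $t\binom{|V_1|+1}{2}(n-2)=O(n^3/t)$, a lower-order term relative to the main $\Theta(n^3/t)$ count). One keeps these inside an additive $O(t)$ (equivalently $o(t^2)$) slack on the final lower bound for $\deg_D(i)$; the choice $t_0\ge 13/\lambda$ in~\eqref{epst0} guarantees this slack is dominated by $(\tfrac{11}{12}-\tfrac23)\lambda\tfrac{t^2}{2}=\tfrac{\lambda}{4}t^2$. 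Since $i$ was the minimising index, the bound holds for $\delta(D)$. Note that $R(\varepsilon)$ plays no role here: $D\supseteq K$ only through~\eqref{cup}, and the claim concerns $D$ alone.
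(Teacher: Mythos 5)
Your proof is correct and follows essentially the same double-counting argument as the paper: bound the number of crossing edges at $V_i$ lying in dense triples above by $\deg_D(i)(n/t)^3$, below by $\sum_{v\in V_i}\deg_H(v)$ minus the low-density and same-cluster contributions, and use $t\ge t_0\ge 13/\lambda$ to absorb the error terms. Two harmless slips: the edges meeting $V_i$ with two vertices in a single cluster number $O(n^3/t^2)$, not $O(n^3/t)$ (the latter would not be lower order relative to the main count, though it still yields the $O(t)$ slack you actually use), and $\left(\tfrac{11}{12}-\tfrac23\right)\lambda\tfrac{t^2}{2}=\tfrac{\lambda}{8}t^2$ rather than $\tfrac{\lambda}{4}t^2$; neither affects the conclusion.
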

\begin{proof}  Assume for simplicity that $t|n$ so that $|V_j|=n/t$ for every $j$. Fix $i\in[t]$ and set $N_i$ for the number of edges in $\bigcup_{j,l}|H[V_i,V_j,V_l]|$, where the union runs over all pairs $j,l$ such that $\{i,j,l\}\in D$. Then, on one hand,
$$N_i\le\deg_D(i)\cdot(n/t)^3,$$ while on the other hand, we can bound $N_i$ from below as follows.
We have $$\sum_{u\in V_i}\deg_H(u)\ge \left(\frac59+\lambda\right)(n/t)\binom{n-1}2,$$
but this sum counts the edges within $V_i$ three times and the edges with two vertices in $V_i$ twice.
Thus, the difference
$$\left(\frac59+\lambda\right)(n/t)\binom{n-1}2-3\binom{n/t}3-2\binom{n/t}2(t-1)(n/t)$$ sets a lower bound on the number of edges of $H$ with exactly one vertex in $V_i$. To get a lower bound on $N_i$ we need to further exclude  the at most $\binom{t-1}2(\lambda/12)(n/t)^3$ edges belonging to sub-3-graphs $H[V_i,V_j,V_l]$ with $\{i,j,l\}\not\in D$, as well as, the at most $\binom{n/t}2(t-1)(n/t)$ edges with two vertices in the same set $V_j$, $j\neq i$. Altogether, we arrive at the inequality
\begin{align*}N_i&\ge\left(\frac59+\lambda\right)(n/t)\binom{n-1}2-\frac{\lambda}{12}\binom{t-1}2(n/t)^3-3\binom{n/t}3-3\binom{n/t}2(t-1)(n/t),\end{align*}
whose right-hand side, for large $n$ and using the bound on $t_0$, can be further bounded from below by
\begin{align*}\left(\frac59+\lambda-\frac{\lambda}{12}-\frac1{t^2}-\frac3t\right)\frac{n^3}{2t}+O(n^2) 
&\ge \left(\frac59+\frac{11\lambda}{12}-\frac{\lambda^2}{169}-\frac{3\lambda}{13}+o(1)\right)\frac{n^3}{2t}\\
&\ge \left(\frac59+\frac23\lambda\right)\frac{n^3}{2t}.
\end{align*}
 Comparing the upper and  lower bound on $N_i$, we  obtain the desired estimate. 
\end{proof}

The just established  lower bound on the minimum degree in $D$ is essentially valid for the 3-graph $K$ as well, and thus, allows one to find in $K$  an almost perfect matching.
\begin{claim}\label{PM} There exists a matching $M$ in  $K$ with $|V(M)|\ge(1-\sqrt\varepsilon)t$.
\end{claim}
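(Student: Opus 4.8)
\emph{Proof proposal.} The plan is to pass from $K$ to a large induced subhypergraph $K^*$ whose vertex count is divisible by $3$ and in which every vertex still has degree above the threshold $\left(\tfrac59+\beta\right)\binom{t^*-1}2$ required by Theorem~\ref{sch}; applying that theorem yields a perfect matching of $K^*$, which is exactly the near-perfect matching of $K$ that we want. The only thing to control is the degree loss incurred by the two vertex deletions (one to clean up the non-regular triples, one to fix divisibility), and this is easy because the slack $\tfrac23\lambda$ in the minimum degree of $D$ (Claim~\ref{69}) is linear in $\lambda$, hence dominates the $O(\sqrt\varepsilon)$ and $O(1/t)$ errors, given the choice $\eps\le\tfrac1{400}\lambda^2$ in \eqref{epst0}.

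First I would bound how many edges of $D$ are discarded when forming $K=D(\lambda/12)\cap R(\varepsilon)$. By Lemma~\ref{WRL} the number of triples of clusters that fail to be $\varepsilon$-regular is less than $\varepsilon\binom t3$, so $|D\setminus K|<\varepsilon\binom t3<\tfrac{\varepsilon t^3}6$. Since deleting one edge lowers the degree of exactly three vertices, $\sum_{i\in[t]}\big(\deg_D(i)-\deg_K(i)\big)<3\varepsilon\binom t3<\tfrac{\varepsilon t^3}2$. Hence the set $W$ of vertices $i$ with $\deg_D(i)-\deg_K(i)>\tfrac{\lambda t^2}{12}$ has size $|W|<\tfrac{6\varepsilon t}{\lambda}$, and since $\sqrt\eps\le\lambda/20$ this gives $|W|<0.3\sqrt\varepsilon\,t$. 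By Claim~\ref{69}, every vertex $i\notin W$ satisfies $\deg_K(i)\ge\big(\tfrac59+\tfrac23\lambda-\tfrac16\lambda\big)\tfrac{t^2}2=\big(\tfrac59+\tfrac12\lambda\big)\tfrac{t^2}2$.

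Next, delete $W$ together with at most two further vertices so that the number $t^*$ of surviving vertices is divisible by $3$, obtaining $K^*$; then for $t$ large enough the number of deleted vertices is at most $\sqrt\varepsilon\,t$, so $t^*\ge(1-\sqrt\varepsilon)t\ge\tfrac12t\ge\tfrac12t_0\ge t_1$ (using $t\ge t_0\ge 2t_1$ from \eqref{epst0}). Deleting at most $\sqrt\varepsilon\,t$ vertices reduces each surviving degree by at most $\sqrt\varepsilon\,t\cdot t=\sqrt\varepsilon\,t^2$, so for every $i\in V(K^*)$,
\[
\deg_{K^*}(i)\ \ge\ \Big(\tfrac59+\tfrac12\lambda-2\sqrt\varepsilon\Big)\tfrac{t^2}2\ \ge\ \Big(\tfrac59+\tfrac25\lambda\Big)\tfrac{t^2}2\ \ge\ \Big(\tfrac59+\tfrac25\lambda\Big)\binom{t^*-1}2,
\]
where the middle inequality uses $2\sqrt\varepsilon\le\lambda/10$ and the last one uses $\binom{t^*-1}2\le\tfrac{(t^*)^2}2\le\tfrac{t^2}2$.

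Finally, apply Theorem~\ref{sch} to $K^*$ with $\beta=\tfrac25\lambda$: this is legitimate since $3\mid t^*$ and $t^*\ge t_1$, so $K^*$ has a perfect matching $M$. As $M$ is a matching in $K$ with $|V(M)|=t^*\ge(1-\sqrt\varepsilon)t$, the claim follows. I do not expect any genuine obstacle here; the only point requiring care is the bookkeeping of the degree losses from the two deletions, which is why one wants the error terms to be of smaller order than the linear-in-$\lambda$ slack, as ensured by \eqref{epst0}.
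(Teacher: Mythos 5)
Your proposal is correct and follows essentially the same route as the paper: remove a small set $W$ of vertices damaged by the irregular triples, delete at most two more vertices to make the order divisible by $3$, check that the minimum degree of the resulting sub-3-graph still exceeds $\left(\tfrac59+\beta\right)\binom{t^*-1}2$, and invoke Theorem~\ref{sch}. The only (cosmetic) differences are that you define $W$ via a degree-loss threshold rather than via incidences with $\eps$-irregular triples and end up applying Theorem~\ref{sch} with $\beta=\tfrac25\lambda$ instead of $\tfrac12\lambda$, which merely requires choosing $t_1=t_1(\tfrac25\lambda)$ (and $t_0\ge 3/\sqrt{\eps}$ for your absorption of the ``$+2$'') in the constant hierarchy~\eqref{epst0}.
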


\begin{proof} We will find a sub-3-graph $K'$ of $K$ with $|V(K')|:=t'\ge(1-\sqrt\varepsilon)t$ and
\begin{equation}\label{boundt'}
\delta(K')\ge\left(\frac59+\frac12\lambda\right)\binom{t'-1}2.
\end{equation}
Once we are done with this task,  the claim will follow from Theorem \ref{sch} with $\beta=\tfrac12\lambda$ (note that, by (\ref{epst0}), $\eps\le 1/2$ and  $t'\ge t/2\ge t_0/2\ge t_1$).

Since the number of $\eps$-irregular triples is less than $\eps\binom t3$, the set $W$ of vertices $i\in[t]$ incident in $D$ to more than $\sqrt\eps\binom{t-1}2$ of them has size $|W|\le \sqrt\eps t$. For every vertex $i\in[t]\setminus W$,
$$\deg_K(i)\ge\left(\frac59+\frac23\lambda\right)\frac{t^2}2-\sqrt\eps\binom{t-1}2.$$
Let $W'\supset W$ be such that $t-|W'|$ is divisible by 3 and $|W'|\le|W|+2$.
As for every $i\in[t]\setminus W'$ and $j\in W'$ there are at most $t-2$ edges in $K$ containing both these vertices,
the induced sub-3-graph $K'=K-W'$ has minimum degree at least
$$\left(\frac59+\frac23\lambda\right)\frac{t^2}2-\sqrt\eps\binom{t-1}2-\left(\sqrt \eps t+2\right)(t-2)\ge \left(\frac59+\frac\lambda2\right)\binom{t-1}2,$$
where the first inequality follows from the bound $\eps\le \lambda^2/400$. Since $t\ge t'$, we have also (\ref{boundt'}) which, as explained above, completes the proof of Claim~\ref{PM}. 
\end{proof}

\begin{claim}\label{reg_cov} For every $\{i,j,l\}\in M$ the $3$-graph 
$H[V_i,V_j,V_l]$ contains a family ${\cQ}_{ijl}$ of vertex-disjoint copies of $K_{L,L,L}$ such that  $|{\cQ}_{ijl}|L\ge(1-\varepsilon) n/t$.
\end{claim}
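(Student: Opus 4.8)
The plan is to build $\mathcal{Q}_{ijl}$ by a straightforward greedy packing inside the tripartite $3$-graph $H[V_i,V_j,V_l]$, exploiting its $\varepsilon$-regularity to keep the edge density bounded away from $0$ as vertices are removed, and invoking Lemma~\ref{erdos} at each step to produce the next copy of $K_{L,L,L}$.

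Since $\{i,j,l\}\in M\subseteq K=D(\lambda/12)\cap R(\varepsilon)$, the tripartite $3$-graph $H[V_i,V_j,V_l]$ is $\varepsilon$-regular and has density $d_H(V_i,V_j,V_l)\ge\lambda/12$. For simplicity assume $t\mid n$, so that $|V_i|=|V_j|=|V_l|=N:=n/t$ (the general case changes these sizes by at most $1$ and is absorbed into the slack). I would first record an elementary fact: because every edge of $H[V_i,V_j,V_l]$ contains exactly one vertex of each $V_i$, $V_j$, $V_l$, any copy of $K_{L,L,L}$ lying inside $H[V_i,V_j,V_l]$ automatically has each of its three vertex classes contained in a single one of $V_i$, $V_j$, $V_l$ (for $L=1$ this is trivial; for $L\ge2$ it follows by comparing two edges differing in only one vertex). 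Consequently such a copy uses exactly $L$ vertices from each of $V_i$, $V_j$, $V_l$.

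Now run the greedy process. Suppose pairwise disjoint copies $Q_1,\dots,Q_q$ of $K_{L,L,L}$ in $H[V_i,V_j,V_l]$ have already been chosen, and let $A_i=V_i\setminus\bigcup_r V(Q_r)$, and similarly $A_j$, $A_l$, so that $|A_i|=|A_j|=|A_l|=N-qL=:a$ by the fact above. As long as $a\ge\varepsilon N$, $\varepsilon$-regularity gives $d_H(A_i,A_j,A_l)\ge d_H(V_i,V_j,V_l)-\varepsilon\ge\lambda/12-\varepsilon\ge\lambda/24$ (the last step uses $\varepsilon\le\lambda^2/400$), whence $e_H(A_i,A_j,A_l)\ge(\lambda/24)a^3=\tfrac{\lambda}{648}(3a)^3$. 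Thus $H[A_i,A_j,A_l]$ is a $3$-graph on $3a$ vertices with at least $\tfrac{\lambda}{648}(3a)^3$ edges, and since $3a\ge3\varepsilon N$ is large for $n\ge n_{\ref{clic}}$ (recall $N=n/t\ge n/T_0$), Lemma~\ref{erdos} applied with $d=\lambda/648$ and $h=L$ yields a copy of $K_{L,L,L}$ inside $H[A_i,A_j,A_l]$; take it as $Q_{q+1}$, which is automatically disjoint from $Q_1,\dots,Q_q$ and removes exactly $L$ vertices from each part. Hence the process continues for every $q$ with $qL\le(1-\varepsilon)N$, producing a family $\mathcal{Q}_{ijl}$ with $|\mathcal{Q}_{ijl}|\ge\lfloor(1-\varepsilon)N/L\rfloor+1>(1-\varepsilon)N/L$, and therefore $|\mathcal{Q}_{ijl}|L>(1-\varepsilon)N=(1-\varepsilon)n/t$, as required.

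I do not expect a genuine obstacle here; the argument is routine once Claims~\ref{69} and~\ref{PM} are in place. The only points needing care are (i) that the density of the shrinking triple stays above a fixed positive constant, which is exactly what $\varepsilon$-regularity delivers while each remaining part still has at least $\varepsilon N$ vertices, and (ii) that the hypotheses of Lemma~\ref{erdos} (in particular, enough vertices) are met at every step of the process, which follows from $N=n/t\ge n/T_0$ together with the choice of $n_{\ref{clic}}$.
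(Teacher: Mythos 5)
Your proposal is correct and is essentially the paper's argument: both rest on the observation that any three subsets of the clusters of size at least $\varepsilon n/t$ still have density about $\lambda/12$ by $\varepsilon$-regularity, so Lemma~\ref{erdos} produces a copy of $K_{L,L,L}$ inside them, whence a maximal (equivalently, greedily grown) disjoint family leaves fewer than $\varepsilon n/t$ vertices uncovered in each part. The only difference is presentational — you run the packing greedily and track the leftover sets, while the paper takes a maximal family and applies the same regularity-plus-Erd\H{o}s step to leftover sets of size exactly $\varepsilon n/t$ — and your extra remark that each class of a $K_{L,L,L}$ aligns with one of $V_i,V_j,V_l$ is a correct (and welcome) tidying of a point the paper leaves implicit.
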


\begin{proof} The claim will follow if we show that for all $W_k\subset V_k$, $|W_i|=\eps n/t$, $k=i,j,l$, the induced subhypergraph $H[W_i,W_j,W_l]$ contains a $K_{L,L,L}$.
Indeed, then a maximal family of vertex disjoint copies of $K_{L,L,L}$ in $H[V_i,V_j,V_l]$ may miss only less than $\eps n/t$ vertices in each set $V_k$ for $k=i,j,l$.

By $\eps$-regularity of $H[V_i,V_j,V_l]$ we have
$$|H[W_i,W_j,W_l]|\ge (\lambda/12-\eps)(n/t)^3=\frac1{27}(\lambda/12-\eps)(3n/t)^3.$$ Recalling that $t\le T_0$, $n\ge n_{\ref{clic}}$, and in view of (\ref{epst0}),  observe that
$$n/t\ge n/T_0\ge n_{\ref{clic}}/T_0\ge n_{\ref{erdos}}.$$
We apply Lemma \ref{erdos} to $H[W_i,W_j,W_l]$
 with $d= \tfrac1{27}(\lambda/12-\eps)$ and $h=L$, and conclude that there is in $H[W_i,W_j,W_l]$ a copy of $K_{L,L,L}$ (in fact, as many as $c(3n/t)^{3L}>0$, for some $c>0$).
 \end{proof}

\begin{proof}[Proof of Lemma~\ref{clic} (conclusion)]
Consider the union of all the families guaranteed by Claim \ref{reg_cov},  ${\cQ}=\bigcup_{\{i,j,l\}\in M}{\cQ}_{ijl}$. Since, clearly, $|M|\le t/3$ and, by Claim \ref{PM}, at most $\sqrt\varepsilon t(n/t)=\sqrt\eps n$ vertices of $H$ are not
covered by the clusters of $M$, we  conclude that $\cQ$ covers all
 but at most
 $$|M|\times 3\varepsilon n/t+\sqrt\varepsilon n\le\left(\varepsilon+\sqrt\varepsilon\right)n\le\rho n$$
vertices of $H$, where the last inequality follows from the assumption that $\eps\le\rho^2/4$. This completes  the proof of Lemma \ref{clic}.
\end{proof}

\begin{remark}\label{rem} As it will become clear in the next section, for our purposes it would be sufficient to prove Lemma \ref{clic} under the stronger assumption $\delta(H)\ge.7\binom{n-1}2$ and then the proof of Claim \ref{PM} would be quite straightforward, in particular, we would not need Theorem \ref{sch}.
But with Theorem \ref{sch} at hand, the strengthening of Lemma \ref{clic} comes for free and may be useful in the future work towards Conjecture~\ref{con59}.
\end{remark}

\section{Proof of Theorem \ref{main}}\label{final}
\label{sec:pfmain}
Let $H$ be a 3-graph with $\delta(H)\ge.8\binom{n-1}2$ and $n\ge n_{\ref{main}}$.
 To find a Hamiltonian cycle in $H$ we follow the five step outline presented in Section 2.1., and use Lemmas \ref{abs59}, \ref{res}, \ref{clic}, and \ref{conn} along the way. To facilitate their application, we begin with setting up the constants.

Let $\gamma_0$ be given by Lemma \ref{abs59} and let
\begin{equation}\label{ga}
\gamma=\min\{\gamma_0, 10^{-6}/3\}.
\end{equation}
 Further, let $n_{\ref{res}}=n_{\ref{res}}(\gamma^2/3)$ and  $n_{\ref{clic}}$ come from Lemma \ref{clic} with $\rho=\gamma^3$, $L=\lceil\tfrac13\gamma^{-3}\rceil$, and, say, $\lambda=1/9$.
 Finally, set
\begin{equation}\label{en}
n_{\ref{main}}=\max\left(\frac{n_{\ref{conn}}}{\gamma^2/4-14\gamma^3},\; n_{\ref{abs59}},\; \frac{n_{\ref{res}}}{1-\gamma},\; \frac{n_{\ref{clic}}}{1-\gamma-\gamma^2/2},\; 10^{12},\;\frac2{\gamma^3}\right).
\end{equation}

\subsection{Finding an absorbing path \texorpdfstring{$A$}{A} in \texorpdfstring{$H$}{H}}
By Lemma \ref{abs59} there exists in $H$ a $\gamma^2n$-absorbing path $A$ with $1/3$-large endpairs and with $|V(A)|\le\gamma n$. Recall that the powerful absorbing property of $A$ asserts that for every subset
$U\subset V(H)\setminus V(A)$ of size  $|U|\le \gamma^2n$
 there is a path $A_U$ in $H$  with
$V(A_U)=V(A)\cup U$ and with the same  endpairs as $A$ (see Definition~\ref{AP}). We are going to use this property at the very end of the proof.

\subsection{Finding a reservoir set \texorpdfstring{$R$}{R} in \texorpdfstring{$H- V(A)$}{H-V(A)}}

We need a small reservoir set $R$ which can be quickly reached from any $1/3$-large pair of vertices in $H$ and which satisfies the assumptions of Lemma \ref{conn}. The next claim is a simple corollary of Lemma \ref{res}.

\begin{claim}\label{res1} There exists a set $R\subset V(H)\setminus V(A)$ such that
\begin{enumerate}[label=\rmlabel]
\item[\upshape{(}$a$\upshape{)}] $\gamma^2n/4\le|R|\le \gamma^2n/2$,
\item[\upshape{(}$b_1$\upshape{)}] every $1/3$-large pair $e$ of $H$ has at least $.333|R|$ neighbours in $R$,
\item[\upshape{(}$b_2$\upshape{)}] every vertex $v$ of $H$ has at least $.697|R|$ neighbours in $G_{1/3}$ which belong to $R$,
\item[\upshape{(}$c$\upshape{)}] $\delta(H[R])\ge .7994\binom{|R|}2$.
\end{enumerate}
\end{claim}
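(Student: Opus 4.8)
The plan is to deduce Claim~\ref{res1} as a routine application of the Reservoir Lemma (Lemma~\ref{res}) on the ground set $V'=V(H)\setminus V(A)$, whose size is $n'=n-|V(A)|\ge(1-\gamma)n$ (using $|V(A)|\le\gamma n$ from Lemma~\ref{abs59}); note $n'\ge(1-\gamma)n_{\ref{main}}\ge n_{\ref{res}}$ by the choice of $n_{\ref{main}}$ in \eqref{en}, so Lemma~\ref{res} is applicable with success probability $p=\gamma^2/3$. We feed it three families of objects living on $V'$: for every $1/3$-large pair $e\in G_{1/3}$ the set $U_e=N_H(e)\cap V'$ of neighbours of $e$ that avoid $A$; for every vertex $v\in V(H)$ the set $U_v=\{u\in V'\colon uv\in G_{1/3}\}$ of $1/3$-large partners of $v$ that avoid $A$; and for every vertex $v\in V'$ the graph $L_v=H(v)[V']$, i.e.\ the link graph of $v$ with the vertices of $A$ deleted. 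Each family has at most $\binom n2$ members, hence is of polynomial size, as Lemma~\ref{res} requires.

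Next I would verify the density hypotheses. Since $e\in G_{1/3}$ means $\deg_H(e)\ge\tfrac13(n-2)$ and at most $|V(A)|\le\gamma n$ of these neighbours can lie in $A$, we get $|U_e|\ge\bigl(\tfrac13-2\gamma\bigr)n'$ for large~$n$. For $U_v$, Claim~\ref{f} with $c=.8$ and $\alpha=1/3$ gives $\delta(G_{1/3})\ge g_{.8}(1/3)(n-1)=0.7(n-1)$, so deleting the $\le\gamma n$ vertices of $A$ still leaves $|U_v|\ge(0.7-2\gamma)n'$. For $L_v$, the hypothesis $\deg_H(v)\ge.8\binom{n-1}2$ together with the crude bound that at most $|V(A)|(n-2)\le\gamma n^2$ pairs of $\binom{V(H)\setminus\{v\}}2$ meet $V(A)$ gives $|L_v|\ge.8\binom{n-1}2-\gamma n^2\ge(0.8-3\gamma)\binom{n'}2$. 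Thus we may run Lemma~\ref{res} with $\alpha_e=\tfrac13-2\gamma$, $\alpha_v=0.7-2\gamma$, and $\beta_v=0.8-3\gamma$.

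Finally I would read off $(a)$--$(c)$ from the set $R\subset V'$ produced by Lemma~\ref{res}. Part~$(a)$: the bound $\bigl||R|-pn'\bigr|\le p(n')^{2/3}$ with $p=\gamma^2/3$ and $n'\ge(1-\gamma)n$ places $|R|$ between $\gamma^2n/4$ and $\gamma^2n/2$ for large~$n$, since $\tfrac13(1-\gamma)>\tfrac14$. Parts $(b_1)$ and $(b_2)$ are precisely conclusion~$(b)$ of Lemma~\ref{res} for the families $\{U_e\}$ and $\{U_v\}$: $|U_e\cap R|\ge(\alpha_e-2(n')^{-1/3})|R|\ge.333|R|$ and $|U_v\cap R|\ge(\alpha_v-2(n')^{-1/3})|R|\ge.697|R|$, the slack being ample because $\tfrac13-2\gamma-.333$ and $0.7-2\gamma-.697$ stay bounded away from~$0$. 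For $(c)$ the key observation is that for $v\in R$ one has $L_v[R]=H(v)[R]=H[R](v)$, the link graph of $v$ inside $H[R]$; hence conclusion~$(c)$ of Lemma~\ref{res} reads $\deg_{H[R]}(v)=|L_v[R]|\ge(\beta_v-3(n')^{-1/3})\binom{|R|}2\ge.7994\binom{|R|}2$, giving $\delta(H[R])\ge.7994\binom{|R|}2$ as $0.8-3\gamma-.7994>0$.

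There is no genuine obstacle here; the claim is essentially bookkeeping. The one point that needs a moment's care is encoding the $3$-uniform minimum-degree requirement $(c)$ as an edge-count requirement on the link graphs $L_v$, and checking that the two sources of loss --- deleting the $\le\gamma n$ vertices of the absorbing path, and the intrinsic $O(n^{-1/3})$ error built into Lemma~\ref{res} --- together consume less than the comfortable gaps $\tfrac13>.333$, $0.7>.697$, $0.8>.7994$, which they do since $\gamma\le10^{-6}/3$ is tiny.
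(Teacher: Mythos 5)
Your proposal is correct and follows essentially the same route as the paper: apply Lemma~\ref{res} to $V(H)\setminus V(A)$ with $p=\gamma^2/3$, taking the sets $N_H(e)\setminus V(A)$ for $e\in G_{1/3}$ and $N_{G_{1/3}}(v)\setminus V(A)$ for $v\in V$, the link graphs $H(v)-V(A)$ as the $L_j$, and the same (up to negligible constants) coefficients $\alpha_e\approx 1/3-\gamma$, $\alpha_v\approx .7-\gamma$, $\beta_v=.8-3\gamma$, then reading off $(a)$--$(c)$ via $\delta(H[R])=\min_{v\in R}|H(v)[R]|$. The only remark is that the numerical slack in $(b_1)$ is not as generous as you suggest (the gap $1/3-.333\approx 3.3\times 10^{-4}$ must absorb the $O(n^{-1/3})$ error, which is of order $10^{-4}$ since $n\ge 10^{12}$), but the inequalities do check out.
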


\begin{proof}
Note that $|V(H)\setminus V(A)|\ge(1-\gamma)n\ge n_{\ref{res}}$ and apply Lemma \ref{res} to $V(H)\setminus V(A)$ with $p=\gamma^2/3$ and with the following choice of sets $U_i$ and graphs $L_j$:
\begin{itemize}
\item[($b$)] The sets $U_i$ are
\begin{itemize}
\item[($b_1$)] the sets $N_H(e)\setminus V(A)$, over all $e\in G_{1/3}$,
\item[($b_2$)]  the sets $N_{G_{1/3}}(v)\setminus V(A)$, over all $v\in V$, 
\end{itemize}
\item[($c$)]  The graphs $L_j$  are the graphs $H(v)-V(A)$, $v\in V$, obtained from the link graphs by removing the vertices on $A$.
\end{itemize}

The corresponding coefficients $\alpha_i$ for sets in group ($b_1$) are $\alpha_e=1/3-\gamma$, while in group ($b_2$) they are, by Claim \ref{degdeg}, 
with $\alpha_v=h_{.8}(1/3)-\gamma=.7-\gamma$.
The coefficient $\beta_j$ for graphs (part~($c$)) are $\beta_v=.8-3\gamma$, because every vertex $v$ belongs to at most $|V(A)|(n-2)<3\gamma\binom{n-1}2$ edges intersecting $V(A)$.

In the short argument below we use the bounds on $\gamma$ and $n$ stemming from~\eqref{ga} and~\eqref{en}.
Because  $n\ge10^{12}\ge3 64$, part ($a$) follows from Lemma \ref{res}~\ref{ra}. Parts ($b_1$) and ($b_2$) follow from Lemma \ref{res}~\ref{rb}, because $\gamma\le0.001$ and $n^{-3}\le 0.001$. Finally, part ($c$) follows from Lemma \ref{res}~\ref{rc}, using $\gamma\le0.0001$  and $n^{-3}\le 0.0001$, and the relation $\delta(H[R])=\min_{v\in R}|H(v)[R]|$.
\end{proof}

\subsection{Finding a collection of disjoint paths \texorpdfstring{$\mathcal P$}{P}, covering most of the vertices}
Our goal here is to find a collection of disjoint paths in $H$ which are disjoint from $V(A)\cup R$, cover almost all vertices in $V(H)\setminus(V(A)\cup R)$, and, most importantly, have $1/3$-large endpairs. This last condition is needed in the next step of the proof where we connect all these paths together.

Recall that $H'$ is a sub-3-graph of $H$ consisting of all edges containing at least one $1/3$-large pair, that is, an edge of the graph $G_{1/3}$ (see (\ref{H0})).
We have already shown that $\delta(H')\ge.709n^2/2$ (see (\ref{H'H})). Setting
$$H'':=H'-(V(A)\cup R)$$
 and noting that, by (\ref{ga}), $\gamma<0.0005$, we thus have
$$\delta(H'')\ge.709n^2/2-|V(A)\cup R|n\ge.709n^2/2-2\gamma n^2\ge.708n^2/2\ge.708\binom{n-1}2.$$
By (\ref{en}), we also have $|V(H'')|\ge n_{\ref{clic}}$. We apply Lemma \ref{clic} to $H'':=H'-(V(A)\cup R)$ with $\rho=\gamma^3$, $L=\lceil\tfrac13\gamma^{-3}\rceil$, and, say, $\lambda=1/9$, obtaining a family $\mathcal Q$ of vertex-disjoint copies of $K_{L,L,L}$ which together cover at least $(1-\gamma^3)|V(H'')|$ vertices of $H''$.

By the definition of $H'$, every copy $Q\in \mathcal Q$ of $K_{L,L,L}$ contains a path $P$ of length at least $3L-1$ with both endpairs in $G_{1/3}$.
Let $\mathcal P$ be the family of all these paths. Note that
$$|\mathcal P|=|\mathcal Q|\le n/3L<\gamma^3n.$$

Unlike in \cite{1112}, the path cover $\mathcal P$ we have gotten  consists of $\Theta(n)$ paths of length $O(1)$. Yet,  the number of these paths, $|\mathcal P|$,  is much smaller than $|R|$ (compare the above bound with the lower bound in part ($a$) of Claim \ref{res1}.) This  allows us to glue them all together using the reservoir set.

\subsection{Connecting the paths in \texorpdfstring{$\mathcal P$}{P} and the absorbing path \texorpdfstring{$A$}{A} into a long cycle}
 Our task is to connect all the paths in $\mathcal P$, as well as the absorbing path $A$, into one cycle. Let $m=|\mathcal P|+1$ be the number of these paths and let  $\mathcal P=\{P_1,\dots,P_{m-1}\}$.  Further, let $e_i$ and $f_{i+1}$ be the endpairs of $P_i$, $i=1,\dots,m$, where we set $P_m=A$ and $f_{m+1}=f_1$  for convenience. Recall that all these endpairs are ordered pairs of vertices and, treated as unordered pairs, they belong to $G_{1/3}$.

Then, the following claim is all what we need. It states that all pairs $\{e_i,f_i\}$, $i=1,\dots, m$,  can be simultaneously connected by short, mutually vertex-disjoint paths whose all inner vertices (other than those in $e_i$ and $f_i$) belong to $R$. Of course, this connecting scheme results in a cycle $C$ in $H$ containing all paths $P_1,\dots,P_{m-1}$ and $A$ as sub-3-graphs.

\begin{claim}\label{connR} Let $\delta(H)\ge.8\binom{n-1}2$ and let $R$ satisfy properties ($a$)-($c$) of Claim \ref{res1}. Further, let $m$ be an integer, $m\le \gamma^3n+1$, and let $e_1,\dots,e_m$ and $f_1,\dots, f_m$ be disjoint ordered pairs in $V\setminus R$ which belong to $G_{1/3}$. Then,  there are disjoint  paths $\Pi_1,\dots, \Pi_m$, each of length 16, where, for $i=1,\dots, m$, $\Pi_i$ has endpairs $e_i$ and $f_i$ and $V(\Pi_i)\setminus(e_i\cup f_i)\subset R$.
\end{claim}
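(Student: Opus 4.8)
The plan is to connect the $m$ pairs $\{e_i,f_i\}$ one at a time, greedily, always working \emph{inside} the reservoir set $R$ together with the four fixed vertices of $e_i$ and $f_i$. The key point is that $|R|\ge\gamma^2n/4$ while $m\le\gamma^3n+1$, so each connection consumes only $O(1)$ vertices of $R$ and, since $16m = O(\gamma^3 n)$ is far smaller than $\gamma^2 n/4$, the reservoir never gets depleted — we always have at least $(1-o(1))|R|$ vertices of $R$ still available when we process the $i$-th pair. The length $16$ (rather than $12$ as in Lemma~\ref{conn}) is presumably chosen so that the two ``growing'' ends each have two extra edges of slack: we first step once from $e_i$ into $R$ to land on a pair with one endpoint in $R$, then again to land on a pair with \emph{both} endpoints in $R$, and similarly from $f_i$; after that, four edges are spent on each side inside $R$ exactly as in the proof of Lemma~\ref{conn} (raising pair-degrees through $\alpha_1=.33,\dots,\alpha_5=.65$), and the two sides are joined by one more vertex-pair in $R$, for a total of $2 + 4 + 4 + 2 + (\text{middle}) = 16$ edges.

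Concretely, for the $i$-th connection, let $R_i\subseteq R$ be the set of reservoir vertices not yet used (so $|R\setminus R_i|\le 15(i-1)\le 15\gamma^3n$, hence $|R_i|\ge(1-o(1))|R|$). First I would use property $(b_1)$ of $R$: the ordered pair $e_i=(a,b)$ has at least $.333|R|$ neighbours in $R$, so at least $.333|R| - 15\gamma^3 n \ge .332|R|$ of them lie in $R_i$; pick such a neighbour $c$. Now I need the new pair $bc$ (with $c\in R$) to have large co-degree \emph{into $R$}; for this one uses property $(c)$, $\delta(H[R])\ge.7994\binom{|R|}2$, which via Claim~\ref{f} (applied to the $|R|$-vertex $3$-graph $H[R]$) forces $\delta\big(G_{1/3}(H[R])\big)$ — and more generally $\delta(G_\alpha(H[R]))$ for the relevant $\alpha$'s — to be large, exactly as in the proof of the Connecting Lemma. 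Repeating once more produces a pair $cd$ with both $c,d\in R_i$ and $cd$ $.39$-large within $H[R]$. Do the symmetric construction from $f_i$. From here the interior of the path lives entirely in $H[R_i]$, whose minimum degree $.7994\binom{|R_i|}2 \ge .799\binom{|R_i|-1}2$ (for large $n$, using $|R_i|\ge(1-o(1))|R|\to\infty$ by $(a)$ and $n\ge2/\gamma^3$) meets the hypothesis of Lemma~\ref{conn}; applying Lemma~\ref{conn} \emph{inside $H[R_i]$} to the two $.33$-large (indeed $.39$-large) endpairs just produced yields the length-$12$ middle, and splicing gives a path $\Pi_i$ of length $2+12+2=16$ with endpairs $e_i,f_i$ and all interior vertices in $R_i\subseteq R$. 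Mark the $\le 15$ new reservoir vertices as used and move to $i+1$; disjointness across the $\Pi_i$ is automatic because the interiors come from disjoint subsets of $R$ and the endpairs $e_1,\dots,e_m,f_1,\dots,f_m$ are pairwise disjoint by hypothesis.

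The main obstacle, and the only place requiring care, is bookkeeping the degree losses so that \emph{all} invocations of Lemma~\ref{conn} and of properties $(b_1),(b_2),(c)$ remain valid simultaneously across all $m$ rounds: one must check that deleting up to $15\gamma^3n$ reservoir vertices (plus the at most $4m$ fixed endpair-vertices, which lie outside $R$ anyway) degrades $\delta(H[R_i])$ by only $O(\gamma^3 n\cdot n)=o(n^2)$, negligible against the $\binom{|R|}{2}=\Theta(\gamma^4 n^2)$ scale — here the bound $n\ge 2/\gamma^3$ from~\eqref{en} and the slack between $.7994$ in $(c)$ and the $.799$ threshold of Lemma~\ref{conn} are exactly what make the margins close. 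I expect everything else to be routine: the two initial ``stepping into $R$'' moves are immediate from $(b_1)$ and a one-line neighbourhood-intersection count (as in the bulleted steps of the proof of Lemma~\ref{conn}), and no new idea beyond the Connecting Lemma is needed.
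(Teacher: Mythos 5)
Your overall plan coincides with the paper's: proceed by induction over $i$, let $R_i\subseteq R$ be the reservoir vertices not yet used (losing only $O(\gamma^3 n)$ of them), take two steps from each of $e_{i+1}$ and $f_{i+1}$ into $R_i$, and then join the two newly created pairs by Lemma~\ref{conn} applied to $H[R_i]$, exploiting the slack between the $.7994$ in property ($c$) and the $.799$ hypothesis of Lemma~\ref{conn}, for a total length $2+12+2=16$. The gap is in how you justify the \emph{second} step into $R$. Having picked $c\in R_i$ with $\{a,b,c\}\in H$ via ($b_1$), you need the pair $bc$ to have many neighbours inside $R_i$ (to find $d$ with $\{b,c,d\}\in H$), and you propose to extract this from property ($c$) via Claim~\ref{f} applied to $H[R]$. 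But $b\notin R$, so $bc$ is not a pair of the $3$-graph $H[R]$, and $\delta(H[R])$ --- hence Claim~\ref{f} applied to $H[R]$ --- gives no information about $\deg_H(b,c)$ or about $|N_H(b,c)\cap R|$. An arbitrary neighbour $c$ of the $1/3$-large pair $ab$ may form with $b$ a pair of tiny, even zero, co-degree, and then the construction stalls. The same unknown quantity $|N_H(b,c)\cap R_i|$ is needed again in your intersection count that is supposed to make $cd$ simultaneously a co-neighbour of $bc$ and $.39$-large, so that step is not justified either.

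The missing ingredient is property ($b_2$) of Claim~\ref{res1}, which you never invoke: every vertex of $H$ has at least $.697|R|$ neighbours in $G_{1/3}$ lying in $R$. The paper chooses the stepping vertex to satisfy two conditions at once: $\{a,b,c\}\in H$ \emph{and} $bc\in G_{1/3}$, which is possible because $.33+.69>1$ (intersect, inside $R_i$, the neighbourhood of the pair $ab$ given by ($b_1$) with the $G_{1/3}$-neighbourhood of the vertex $b$ given by ($b_2$)). Since the new pair $bc$ is again in $G_{1/3}$, the same two properties can be iterated to produce $d\in R_i$ with $\{b,c,d\}\in H$ and $cd\in G_{1/3}$; and ($b_1$) then certifies that $cd$, having both vertices in $R_i$, is $.33$-large in $H[R_i]$, which is exactly what Lemma~\ref{conn} requires of the endpairs. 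With this device replacing your appeal to ($c$)/Claim~\ref{f} in the stepping phase, the rest of your argument --- the bookkeeping of the $14$ interior vertices per path, the degradation estimates on $R_i$, and the application of Lemma~\ref{conn} inside $H[R_i]$ --- goes through as in the paper.
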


\begin{proof} We show by induction on $i$ that the paths $\Pi_1,\dots,\Pi_m$ exist and that $|V(\Pi_i)\cap R|=14$. Suppose that for some $0\le i\le m-1$ we have already found paths $\Pi_j$ for all $j=1,\dots, i$. Together these paths occupy $14i\le 14(m-1)\le 14\gamma^3 n$ vertices of $R$. Let $R_i$ be the set of all the remaining vertices of $R$. Thus, by part ($a$) of Claim \ref{res1},
\begin{equation}\label{RiR}
|R_i|\ge|R|- 14\gamma^3n\ge n_{\ref{conn}}.
\end{equation}
The properties ($a$)-($c$) of $R$ established in Claim \ref{res1} imply the following, a bit weaker, properties  of every subset $R'\subseteq R$, with $|R'|\ge |R|-15\gamma^3n$:

\begin{enumerate}\item[($b'_1$)] every $1/3$-large pair $e$ of $H$ has at least $.33|R'|$ neighbours in $R'$,
\item[($b'_2$)] every vertex $v$ of $H$ has at least $.69|R'|$ neighbours in $G_{1/3}$ which belong to $R'$,
\item[($c'$)] $\delta(H[R'])\ge .799\binom{|R'|-1}2$.
\end{enumerate}
Indeed, to see, for instance, that ($c'$) holds, observe that
$$0.7994\binom{|R|}2-15\gamma^3n(|R|-2)\ge0.799\binom{|R|-1}2$$
follows from $0.0004|R|\ge 30\gamma^3n$, which, in turn, follows by the lower bound in ($a$) and by~\eqref{ga}.

 We  shall use these properties to connect the pairs $e_{i+1}=(v,u)$ and  $f_{i+1}=(y,x)$. Since~$.33+.69>1$ and   $\{u,v\}\in G_{1/3}$, by ($b_1'$) and ($b_2'$) applied to $R'=R_i$, there exists a vertex $w\in R_i$ such that $uvw\in H$ and $vw\in G_{1/3}$. This, in turn, implies that there is also a vertex $w'\in R_i$ such that $vww'\in H$ and $ww'\in G_{1/3}$.  Note that
$$|R_i\setminus\{w,w'\}|\ge |R|-14\gamma^3n-2\ge|R|-15\gamma^3n.$$
Similarly, this time applying ($b_1'$) and ($b_2'$) to $R'=R_i\setminus\{w,w'\}$, we argue that there exist two other vertices in $R_i$, $z$ and $z'$, such that $xyz,yzz'\in H$, while $zz'\in G_{1/3}$.

By ($b_1'$) applied again to $R'=R_i$, both $ww'$ and $zz'$ are $.33$-large in $H[R_i]$, the 3-graph induced in $H$ by $R_i$. Also, by ($c'$), we have $\delta(H[R_i])\ge.799\binom{|R_i|-1}2$. Hence, recalling also (\ref{RiR}), we are in position to apply Lemma \ref{conn} to $H[R_i]$ and conclude that there is a path~$\pi_{i+1}$ of length 12 between $(w,w')$ and $(z,z')$. This path, together with the previously constructed four edges, forms a desired 18-vertex path $\Pi_{i+1}$ between $e_{i+1}$ and $f_{i+1}$ (see Figure~\ref{fig:3}).
\end{proof}

\begin{figure}
\begin{center}
\begin{tikzpicture} 

\foreach \x in {0,1,2,3}{
	\fill (1.6*\x,1.5) circle (2.8pt);
	\fill (1.6*\x,-1.5) circle (2.8pt);
}

\draw[very thick] (0,1.5) -- (3.2,1.5);
\draw[very thick] (0,-1.5) -- (3.2,-1.5);

\node at (0,1.15) {$u$};
\node at (1.6,1.15) {$v$};
\node at (3.2,1.15) {$w$};
\node at (4.8,1.22) {$w'$};

\node at (0,-1.15) {$x$};
\node at (1.6,-1.15) {$y$};
\node at (3.2,-1.15) {$z$};
\node at (4.8,-1.085) {$z'$};

\node at (0.8,1.9) {\small$1/3$};
\node at (2.4,1.9) {\small$1/3$};
\node at (4.0,1.9) {\small$1/3$};

\node at (0.8,-1.9) {\small$1/3$};
\node at (2.4,-1.9) {\small$1/3$};
\node at (4.0,-1.9) {\small$1/3$};

\node at (0.8,1.25) {$e_{i+1}$};
\node at (0.8,-1.22) {$f_{i+1}$};

\draw[very thick, decorate, decoration={snake,segment length=6,amplitude=3}] 
	(3.2,-1.5) -- (4.8,-1.5) to[out=-30,in=30, distance=50] (4.8,1.5) -- (3.2,1.5);

\node at (4.5,0.25) {path $\pi_{i+1}$};
\node at (4.5,-0.25) {of length $12$};

\draw[very thick] (4.5,0) ellipse (2 and 2.5);
\node at (6.2,2.0) {$R$};

\end{tikzpicture}
\end{center}
\caption{Path $\Pi_{i+1}$ of length $16$ from $e_{i+1}$ to $f_{i+1}$.}
\label{fig:3}
\end{figure}
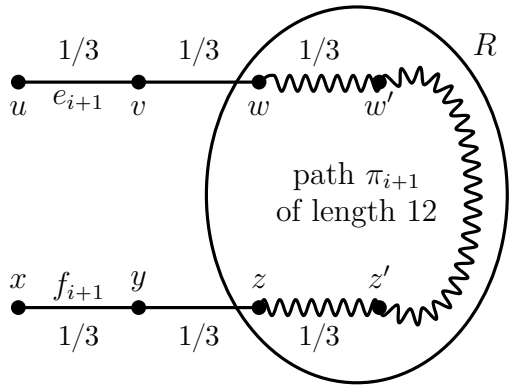

\subsection{Creating a Hamiltonian cycle in \texorpdfstring{$H$}{H}}
Let us denote by $T$ the set of vertices of $H''$, not covered by the paths in $\mathcal P$. It consists of up to $\gamma^3n$ vertices not covered by the copies of $K_{L,L,L}$ in $\mathcal Q$ plus up to $|\mathcal Q|$ vertices dropped from the each $Q_i$ whenever the path $P_i$ had $3L-1$ vertices and not all $3L$. Therefore,
$$|T|\le \gamma^3n+|\mathcal Q|\le 2\gamma^3n.$$
Observe further that, since $C\supset A$, we have $V\setminus V(C)\subset R\cup T$ and thus
$$|V\setminus V(C)|\le \frac12\gamma^2n+2\gamma^3n\le \gamma^2n.$$
Since the path $A$ forms a segment of $C$, we can employ the $\gamma^2n$-absorbing property of $A$ to the set $U:=V\setminus V(C)$. By replacing in $C$, the path $A$ by a path  $A_U$, we finally obtain a Hamiltonian cycle in $H$, which concludes the proof of Theorem~\ref{main}.\qed

\subsection*{Acknowledgement}
We thank the referee for her or his timely work und helpful comments. 
\begin{bibdiv}
\begin{biblist}

\bib{berge}{book}{
   author={Berge, Claude},
   title={Graphs and hypergraphs},
   note={Translated from the French by Edward Minieka;
   North-Holland Mathematical Library, Vol. 6},
   publisher={North-Holland Publishing Co., Amsterdam-London; American
   Elsevier Publishing Co., Inc., New York},
   date={1973},
   pages={xiv+528},
   review={\MR{0357172 (50 \#9640)}},
}

\bib{Bermond}{article}{
   author={Bermond, J.-C.},
   author={Germa, A.},
   author={Heydemann, M.-C.},
   author={Sotteau, D.},
   title={Hypergraphes hamiltoniens},
   language={French, with English summary},
   conference={
      title={Probl\`emes combinatoires et th\'eorie des graphes},
      address={Colloq. Internat. CNRS, Univ. Orsay, Orsay},
      date={1976},
   },
   book={
      series={Colloq. Internat. CNRS},
      volume={260},
      publisher={CNRS, Paris},
   },
   date={1978},
   pages={39--43},
   review={\MR{539937 (80j:05093)}},
}

\bib{BB}{book}{
   author={Bollob{\'a}s, B{\'e}la},
   title={Extremal graph theory},
   note={Reprint of the 1978 original},
   publisher={Dover Publications, Inc., Mineola, NY},
   date={2004},
   pages={xx+488},
   isbn={0-486-43596-2},
   review={\MR{2078877 (2005b:05124)}},
}

\bib{Bus}{article}{
   author={Bu{\ss}, Enno},
   author={H{\`a}n, Hi{\d{\^e}}p},
   author={Schacht, Mathias},
   title={Minimum vertex degree conditions for loose Hamilton cycles in
   3-uniform hypergraphs},
   journal={J. Combin. Theory Ser. B},
   volume={103},
   date={2013},
   number={6},
   pages={658--678},
   issn={0095-8956},
   review={\MR{3127586}},
   doi={10.1016/j.jctb.2013.07.004},
}

\bib{Ch91}{article}{
   author={Chung, Fan R. K.},
   title={Regularity lemmas for hypergraphs and quasi-randomness},
   journal={Random Structures Algorithms},
   volume={2},
   date={1991},
   number={2},
   pages={241--252},
   issn={1042-9832},
   review={\MR{1099803}},
   doi={10.1002/rsa.3240020208},
}

\bib{dirac}{article}{
   author={Dirac, G. A.},
   title={Some theorems on abstract graphs},
   journal={Proc. London Math. Soc. (3)},
   volume={2},
   date={1952},
   pages={69--81},
   issn={0024-6115},
   review={\MR{0047308 (13,856e)}},
}

\bib{E}{article}{
   author={Erd{\H{o}}s, P.},
   title={On extremal problems of graphs and generalized graphs},
   journal={Israel J. Math.},
   volume={2},
   date={1964},
   pages={183--190},
   issn={0021-2172},
   review={\MR{0183654 (32 \#1134)}},
}

\bib{FR92}{article}{
   author={Frankl, P.},
   author={R{\"o}dl, V.},
   title={The uniformity lemma for hypergraphs},
   journal={Graphs Combin.},
   volume={8},
   date={1992},
   number={4},
   pages={309--312},
   issn={0911-0119},
   review={\MR{1204114}},
   doi={10.1007/BF02351586},
}

\bib{GPW}{article}{
   author={Glebov, Roman},
   author={Person, Yury},
   author={Weps, Wilma},
   title={On extremal hypergraphs for Hamiltonian cycles},
   journal={European J. Combin.},
   volume={33},
   date={2012},
   number={4},
   pages={544--555},
   issn={0195-6698},
   review={\MR{2864440}},
   doi={10.1016/j.ejc.2011.10.003},
}

\bib{Good}{article}{
   author={Goodman, A. W.},
   title={On sets of acquaintances and strangers at any party},
   journal={Amer. Math. Monthly},
   volume={66},
   date={1959},
   pages={778--783},
   issn={0002-9890},
   review={\MR{0107610 (21 \#6335)}},
}

\bib{hps}{article}{
   author={H{\`a}n, Hi{\d{\^e}}p},
   author={Person, Yury},
   author={Schacht, Mathias},
   title={On perfect matchings in uniform hypergraphs with large minimum
   vertex degree},
   journal={SIAM J. Discrete Math.},
   volume={23},
   date={2009},
   number={2},
   pages={732--748},
   issn={0895-4801},
   review={\MR{2496914 (2011a:05266)}},
   doi={10.1137/080729657},
}

\bib{hs}{article}{
   author={H{\`a}n, Hi{\d{\^e}}p},
   author={Schacht, Mathias},
   title={Dirac-type results for loose Hamilton cycles in uniform
   hypergraphs},
   journal={J. Combin. Theory Ser. B},
   volume={100},
   date={2010},
   number={3},
   pages={332--346},
   issn={0095-8956},
   review={\MR{2595675 (2011c:05181)}},
   doi={10.1016/j.jctb.2009.10.002},
}

\bib{YiJie}{article}{
   author={Han, Jie},
   author={Zhao, Yi},
   title={Minimum vertex degree threshold for loose Hamilton cycles in
   3-uniform hypergraphs},
   journal={J. Combin. Theory Ser. B},
   volume={114},
   date={2015},
   pages={70--96},
   issn={0095-8956},
   review={\MR{3354291}},
   doi={10.1016/j.jctb.2015.03.007},
}

\bib{YiJieCounter}{article}{
   author={Han, Jie},
   author={Zhao, Yi},
   title={Forbidding Hamilton cycles in uniform hypergraphs},
   eprint={1508.05623},
   note={Submitted.}
}

\bib{JLR}{book}{
   author={Janson, Svante},
   author={{\L}uczak, Tomasz},
   author={Rucinski, Andrzej},
   title={Random graphs},
   series={Wiley-Interscience Series in Discrete Mathematics and
   Optimization},
   publisher={Wiley-Interscience, New York},
   date={2000},
   pages={xii+333},
   isbn={0-471-17541-2},
   review={\MR{1782847 (2001k:05180)}},
   doi={10.1002/9781118032718},
}

\bib{KK}{article}{
   author={Katona, Gy. Y.},
   author={Kierstead, H. A.},
   title={Hamiltonian chains in hypergraphs},
   journal={J. Graph Theory},
   volume={30},
   date={1999},
   number={3},
   pages={205--212},
   issn={0364-9024},
   review={\MR{1671170 (99k:05124)}},
   doi={10.1002/(SICI)1097-0118(199903)30:3$<$205::AID-JGT5$>$3.3.CO;2-F},
}

\bib{KKMO}{article}{
   author={Keevash, Peter},
   author={K{\"u}hn, Daniela},
   author={Mycroft, Richard},
   author={Osthus, Deryk},
   title={Loose Hamilton cycles in hypergraphs},
   journal={Discrete Math.},
   volume={311},
   date={2011},
   number={7},
   pages={544--559},
   issn={0012-365X},
   review={\MR{2765622 (2012d:05274)}},
   doi={10.1016/j.disc.2010.11.013},
}

\bib{khan}{article}{
   author={Khan, Imdadullah},
   title={Perfect matchings in 3-uniform hypergraphs with large vertex
   degree},
   journal={SIAM J. Discrete Math.},
   volume={27},
   date={2013},
   number={2},
   pages={1021--1039},
   issn={0895-4801},
   review={\MR{3061464}},
   doi={10.1137/10080796X},
}

\bib{kmo}{article}{
   author={K{\"u}hn, Daniela},
   author={Mycroft, Richard},
   author={Osthus, Deryk},
   title={Hamilton $\ell$-cycles in uniform hypergraphs},
   journal={J. Combin. Theory Ser. A},
   volume={117},
   date={2010},
   number={7},
   pages={910--927},
   issn={0097-3165},
   review={\MR{2652102 (2011g:05216)}},
   doi={10.1016/j.jcta.2010.02.010},
}

\bib{kob}{article}{
   author={K{\"u}hn, Daniela},
   author={Osthus, Deryk},
   title={Loose Hamilton cycles in 3-uniform hypergraphs of high minimum
   degree},
   journal={J. Combin. Theory Ser. B},
   volume={96},
   date={2006},
   number={6},
   pages={767--821},
   issn={0095-8956},
   review={\MR{2274077 (2007h:05115)}},
   doi={10.1016/j.jctb.2006.02.004},
}

\bib{kot}{article}{
   author={K{\"u}hn, Daniela},
   author={Osthus, Deryk},
   author={Treglown, Andrew},
   title={Matchings in 3-uniform hypergraphs},
   journal={J. Combin. Theory Ser. B},
   volume={103},
   date={2013},
   number={2},
   pages={291--305},
   issn={0095-8956},
   review={\MR{3018071}},
   doi={10.1016/j.jctb.2012.11.005},
}

\bib{klas}{article}{
   author={Markstr{\"o}m, Klas},
   author={Ruci{\'n}ski, Andrzej},
   title={Perfect matchings (and Hamilton cycles) in hypergraphs with large
   degrees},
   journal={European J. Combin.},
   volume={32},
   date={2011},
   number={5},
   pages={677--687},
   issn={0195-6698},
   review={\MR{2788783 (2012c:05247)}},
   doi={10.1016/j.ejc.2011.02.001},
}

\bib{MM}{article}{
   author={Moon, J. W.},
   author={Moser, L.},
   title={On a problem of Tur\'an},
   language={English, with Russian summary},
   journal={Magyar Tud. Akad. Mat. Kutat\'o Int. K\"ozl.},
   volume={7},
   date={1962},
   pages={283--286},
   review={\MR{0151955 (27 \#1936)}},
}

\bib{ns}{article}{
   author={Nordhaus, E. A.},
   author={Stewart, B. M.},
   title={Triangles in an ordinary graph},
   journal={Canad. J. Math.},
   volume={15},
   date={1963},
   pages={33--41},
   issn={0008-414X},
   review={\MR{0151957 (27 \#1938)}},
}

\bib{raz}{article}{
   author={Razborov, Alexander A.},
   title={On the minimal density of triangles in graphs},
   journal={Combin. Probab. Comput.},
   volume={17},
   date={2008},
   number={4},
   pages={603--618},
   issn={0963-5483},
   review={\MR{2433944 (2009i:05118)}},
   doi={10.1017/S0963548308009085},
}

\bib{sur}{article}{
   author={R{\"o}dl, Vojt{\v{e}}ch},
   author={Ruci{\'n}ski, Andrzej},
   title={Dirac-type questions for hypergraphs---a survey (or more problems
   for Endre to solve)},
   conference={
      title={An irregular mind},
   },
   book={
      series={Bolyai Soc. Math. Stud.},
      volume={21},
      publisher={J\'anos Bolyai Math. Soc., Budapest},
   },
   date={2010},
   pages={561--590},
   review={\MR{2815614 (2012j:05008)}},
   doi={10.1007/978-3-642-14444-8\_16},
}

\bib{1112}{article}{
   author={R{\"o}dl, Vojt{\v{e}}ch},
   author={Ruci{\'n}ski, Andrzej},
   title={Families of triples with high minimum degree are Hamiltonian},
   journal={Discuss. Math. Graph Theory},
   volume={34},
   date={2014},
   number={2},
   pages={361--381},
   issn={1234-3099},
   review={\MR{3194042}},
   doi={10.7151/dmgt.1743},
}

\bib{rrs3}{article}{
   author={R{\"o}dl, Vojt{\v{e}}ch},
   author={Ruci{\'n}ski, Andrzej},
   author={Szemer{\'e}di, Endre},
   title={A Dirac-type theorem for 3-uniform hypergraphs},
   journal={Combin. Probab. Comput.},
   volume={15},
   date={2006},
   number={1-2},
   pages={229--251},
   issn={0963-5483},
   review={\MR{2195584 (2006j:05144)}},
   doi={10.1017/S0963548305007042},
}
		
\bib{k}{article}{
   author={R{\"o}dl, Vojt{\v{e}}ch},
   author={Ruci{\'n}ski, Andrzej},
   author={Szemer{\'e}di, Endre},
   title={An approximate Dirac-type theorem for $k$-uniform hypergraphs},
   journal={Combinatorica},
   volume={28},
   date={2008},
   number={2},
   pages={229--260},
   issn={0209-9683},
   review={\MR{2399020 (2009a:05146)}},
   doi={10.1007/s00493-008-2295-z},
}

\bib{rrs}{article}{
   author={R{\"o}dl, Vojt{\v{e}}ch},
   author={Ruci{\'n}ski, Andrzej},
   author={Szemer{\'e}di, Endre},
   title={Perfect matchings in large uniform hypergraphs with large minimum
   collective degree},
   journal={J. Combin. Theory Ser. A},
   volume={116},
   date={2009},
   number={3},
   pages={613--636},
   issn={0097-3165},
   review={\MR{2500161 (2010d:05124)}},
   doi={10.1016/j.jcta.2008.10.002},
}

\bib{3}{article}{
   author={R{\"o}dl, Vojt{\v{e}}ch},
   author={Ruci{\'n}ski, Andrzej},
   author={Szemer{\'e}di, Endre},
   title={Dirac-type conditions for Hamiltonian paths and cycles in
   3-uniform hypergraphs},
   journal={Adv. Math.},
   volume={227},
   date={2011},
   number={3},
   pages={1225--1299},
   issn={0001-8708},
   review={\MR{2799606 (2012d:05213)}},
   doi={10.1016/j.aim.2011.03.007},
}

\bib{St90}{thesis}{
   author={Steger, Angelika},
   title={Die Kleitman-Rothschild Methode},
   type={PhD thesis},
   organization={Forschungsinstitut f\"ur Diskrete Mathematik, Rheinische Friedrichs-Wilhelms-Universit\"at Bonn},
   date={1990},
}

\bib{Sz}{article}{
   author={Szemer{\'e}di, Endre},
   title={Regular partitions of graphs},
   language={English, with French summary},
   conference={
      title={Probl\`emes combinatoires et th\'eorie des graphes},
      address={Colloq. Internat. CNRS, Univ. Orsay, Orsay},
      date={1976},
   },
   book={
      series={Colloq. Internat. CNRS},
      volume={260},
      publisher={CNRS, Paris},
   },
   date={1978},
   pages={399--401},
   review={\MR{540024 (81i:05095)}},
}

\end{biblist}
\end{bibdiv}

\end{document}